\documentclass[utopia]{nmd/nmd-article} 
\pdfoutput=1
\usepackage[letterpaper,headsep=10bp,headheight=15bp,left=1.7cm,right=1.7cm,top=2.2cm,bottom=2.5cm]{geometry}

\usepackage{amsfonts}
\usepackage{amsmath}
\usepackage{amsthm}
\usepackage{amssymb}
\usepackage{stackengine}
\usepackage{longtable}
\usepackage{array}
\usepackage{float}
\usepackage{enumitem}
\usepackage{titletoc}
\usepackage{changepage}
\usepackage{wasysym}
\usepackage{multirow}
\usepackage{doiMatthias}

\newcommand{\myTrig}{\mathcal{T}\!}
\newcommand{\postNorm}{\widehat{\phantom{x}}~}
\newcommand{\posLightCone}{\mathbb{L}_{+}^3}
\newcommand{\emphasisText}[1]{{\bf #1}}
\newcommand{\emphasisDef}[1]{{\bf #1}}
\newcommand{\from}{\colon}
\newcommand{\setOfGeodesics}{G}
\newcommand{\muCensus}[1]{\mu\textnormal{\texttt{(#1\!)}}}

\newcommand{\myComment}[1]{}

\floatstyle{ruled}
\newfloat{algorithm}{thp}{}
\floatname{algorithm}{Algorithm}

\makeatletter
\let\c@algorithm=\c@subsubsection
\makeatother

\newlist{algorithmSteps}{enumerate}{6}
\setlist[algorithmSteps]{label={\arabic*.}, ref={\arabic*}, nosep}
\setlist[enumerate]{label={\arabic*.}, ref={\arabic*}}

\DeclareMathOperator{\RePart}{Re}
\DeclareMathOperator{\ImPart}{Im}
\DeclareMathOperator{\Id}{Id}
\DeclareMathOperator{\myPSL}{PSL}
\DeclareMathOperator{\mySL}{SL}
\DeclareMathOperator{\Eq}{Eq}
\DeclareMathOperator{\EpsilonCandidate}{E}
\DeclareMathOperator{\systole}{sys}

\title{Verified Length Spectrum and\\ Margulis number for\\ Hyperbolic 3-Manifolds}
\author{Matthias Goerner}
\email{enischte@gmail.com}

\author{Robert C. Haraway III}
\email{bobbycyiii@fastmail.com}

\author{Neil R. Hoffman}
\email{neilhoff@d.umn.edu}

\author{Maria Trnkova}
\email{mtrnkova@ucdavis.edu}

\begin{document}

\begin{abstract}
We present a new algorithm to compute a stream for the length spectrum of an oriented, finite-volume hyperbolic $3$-manifold $M$. The advantage over the existing algorithm by Hodgson--Weeks is that our algorithm does not require finding a Dirichlet domain and thus can use interval arithmetic to verify the result. Our algorithm is also significantly faster in some cases. We also present a new algorithm to compute the optimal Margulis number of $M$. The authors have made these algorithms available in SnapPy.
\end{abstract}

\maketitle

\vspace{-0.4cm}

\dottedcontents{section}[3.2em]{\bfseries}{1.8em}{0.7pc}
\setcounter{tocdepth}{1}
\tableofcontents

\newpage

\section{Introduction} \label{sec:defLenSpecStream}

\subsection{Motivation}

Let $M$ be an oriented, finite-volume hyperbolic $3$-manifold. The length spectrum is the multi-set of complex lengths $\lambda$ of all primitive, unoriented closed geodesics $\gamma$ in $M$. It is an important invariant with deep connections to arithmetic, spectral geometry and Dehn surgery. The Selberg trace formula, for example, relates the length spectrum to the spectrum of the Laplace--Beltrami operator and implies that the length spectrum follows a distribution similar to that of the prime numbers. The systole is the smallest $\RePart(\lambda)$ of the length spectrum. Reid and Adams show in \cite{AR:Systoles} that the systole needs to be small for a hyperbolic $3$-manifold to be a link complement or to contain incompressible surfaces of small genus. In \cite{BGR:PrinCong}, this is used, for example, to classify all principal congruence links. In \cite{Haraway:Dehn}, Haraway uses the systole for the Dehn parental test which asks whether two given hyperbolic manifolds are related through a single Dehn filling. More recently, using the verified algorithm for the systole from this paper, Futer, Purcell and Schleimer gave a method in \cite{FPS:Cosmetic} to verify the cosmetic surgery conjecture \cite[Conjecture~6.1]{gordon:surgery} for an oriented, one-cusped hyperbolic $3$-manifold $N$. 
Recall that the cosmetic surgery conjecture states that an oriented, compact $3$-manifold $N$ with $\partial N\cong S^1\times S^1$ has no purely cosmetic surgeries. That is, there is no pair of distinct slopes $s$ and $t$ in $\partial N$ such that there is an orientation-preserving homeomorphism $N(s)\to N(t)$.
 They applied their method to the SnapPy census up to 9 tetrahedra. By running their code \cite{Schleimer:CosmeticCode} and settling the cases not handled by their code in Section~\ref{sec:cosmetic}, we extend their result to the recent census of 10 and 11 tetrahedra by \cite{li:10tet,li:11tet}:
\begin{theorem}[(Compare to \protect{\cite[Theorem~4.3]{FPS:Cosmetic}})]\label{thm:cosmetic}
Of the 686,095 one-cusped orientable manifolds with up to 11 tetrahedra in the SnapPy census, none has purely cosmetic surgeries. 
\end{theorem}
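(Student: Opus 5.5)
The proof is computational, and I would follow the strategy of \cite[Theorem~4.3]{FPS:Cosmetic}. For each of the 686,095 one-cusped census manifolds $N$ with up to 11 tetrahedra, we either rule out purely cosmetic surgeries directly or reduce to a finite list of candidate slope pairs that we can decide one at a time.

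The first step is to run the code \cite{Schleimer:CosmeticCode} on the census of \cite{li:10tet,li:11tet}. Its main ingredient is a verified lower bound on the systole of $N$, which we obtain from the verified length spectrum algorithm of this paper; interval arithmetic makes that bound rigorous. Together with the $6$-theorem and the Hodgson--Kerckhoff/Futer--Purcell--Schleimer drilling and filling estimates, the systole bound confines any cosmetic pair $(s,t)$ to a finite set of \emph{short} slopes. Only for those slopes can the core of the filling fail to be the unique shortest geodesic, or $N(s)$ fail to be hyperbolic.

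We also apply the classical obstructions to each pair among the remaining slopes:
\begin{itemize}
\item the Boileau--Lackenby / Ni--Wu result that a purely cosmetic pair must satisfy $t=-s$ in suitable coordinates, with $H_1$ and Heegaard Floer constraints (vanishing Casson--Walker / $\tau$-type invariants);
\item Hanselman's restriction that cosmetic slopes are $\{\pm 2\}$ or $\{\pm 1/q\}$ in homological coordinates;
\item a comparison of verified invariants of $N(s)$ and $N(t)$: hyperbolic volume, Chern--Simons invariant (which changes sign under orientation reversal, so it detects orientation-preserving homeomorphism), homology, and short geodesic lengths.
\end{itemize}
For hyperbolic fillings, a certified isometry-class distinction, or the fact that the drilled geodesic is the unique shortest one with distinct complex length, settles the pair.

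The second step handles the cases the code leaves unresolved. I expect these to be fillings where $N(s)$ and $N(t)$ are non-hyperbolic (Seifert fibered or toroidal), or where the volume and Chern--Simons computations fail to certify, for example because of ill-conditioned shapes or a short slope near the exceptional range. For each such case I would first try to identify $N(s)$ and $N(t)$ via Regina or SnapPy, and then distinguish them by an orientation-sensitive invariant: linking form, Casson--Walker invariant, or Seifert invariants with orientation. A failed certification can also be repaired by retriangulating and recomputing with higher precision.

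The main obstacle is these leftover cases. Their number is unknown in advance, and the exceptional ones need ad hoc topological identification rather than a uniform verified computation. I would address them case by case, as Section~\ref{sec:cosmetic} does.
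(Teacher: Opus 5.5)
Your first step matches the paper: rerun the modified code of \cite{Schleimer:CosmeticCode} on the 10- and 11-tetrahedra census. The gap is the second step. It is the only part of the proof that is not a black-box computation, and you leave it as ``case by case'' with a menu of invariants that you do not show will work.

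Beyond the cases already treated in \cite[Theorem~4.3]{FPS:Cosmetic}, the code leaves exactly two new pairs: \texttt{o10\_148196} and \texttt{o11\_451858}, each with $s=(1,-1)$ and $t=(1,1)$. In both, $M(s)$ and $M(t)$ are toroidal. Each is a Seifert fibered piece $X$ over the disk with exceptional fibers $(2,1),(2,-1)$, glued along a torus to a hyperbolic census manifold $Y$ (\texttt{m011}, resp.\ \texttt{m016}). Seifert invariants with orientation do not apply, since these manifolds are not Seifert fibered.

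More seriously, $M(s)$ and $M(t)$ turn out to be orientation-\emph{reversingly} homeomorphic. So an orientation-sensitive invariant $I$ separates them only if $I(M(s))\neq I(-M(s))$, that is, only if it certifies that $M(s)$ is chiral. A linking form isomorphic to its own negative, or a vanishing Casson--Walker invariant, cannot do this. You give no reason that the invariants you list are nondegenerate in these cases.

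The missing idea is a rigidity argument that replaces invariants by a construction:
\begin{enumerate}
\item The JSJ decomposition is unique and $X\not\cong Y$, so every homeomorphism $M(s)\to M(t)$ can be isotoped to carry $Y$ to $Y$.
\item $Y$ is chiral (checked in SnapPy). Hence for any two homeomorphisms $f,g\colon M(s)\to M(t)$, the self-map $g^{-1}f$ restricts to an orientation-preserving map of $Y$, so it preserves orientation.
\item Therefore all homeomorphisms $M(s)\to M(t)$ have the same orientation character.
\item Exhibiting one orientation-reversing homeomorphism, which the paper does with \texttt{admit\_reversing\_homeo}, then rules out any orientation-preserving one.
\end{enumerate}

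Separately, the filters in your first step are misapplied. The Ni--Wu ($t=-s$) and Hanselman ($\{\pm 2\}$, $\{\pm 1/q\}$) restrictions are theorems about knots in $S^3$. Most census manifolds are not knot exteriors in $S^3$, so these results cannot serve as general filters on candidate slope pairs.
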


Another application of the length spectrum is the isomorphism problem for closed hyperbolic 3-manifolds. This paper is part of a series of three papers solving the isomorphism problem. The first paper \cite{goerner:tiling} introduces a new tiling algorithm. This paper is the second paper and uses the new tiling algorithm to compute the length spectrum. The last paper \cite{goerner:drilling} shows how to drill geodesics. Together, this allows us to compute the isometry signature of a closed oriented hyperbolic 3-manifold by drilling each of a canonical set of geodesics from the length spectrum. The isometry signature is a complete invariant and thus solves the isomorphism problem.

As explained in \cite[Section~1.1.12]{goerner:tiling}, the length spectrum is also related to the Margulis numbers. In Section~\ref{sec:margulis}, we give an algorithm to compute the optimal Margulis number $\mu(M)$ of an oriented, finite-volume hyperbolic $3$-manifold $M$. The algorithm is available in SnapPy \cite{SnapPy} as of version~3.3. For example, \texttt{Manifold("m004").margulis()} gives $\mu(M)$ for the figure-eight knot complement. We use it in Section~\ref{sec:proofcensusMargulisConstant} to prove:
\begin{theorem} \label{thm:censusMargulisConstant}
Among the orientable cusped (up to 11 tetrahedra) and closed SnapPy census without \texttt{m007(3,1)}, the Weeks manifold \texttt{m003(-3,1)} has the smallest optimal Margulis number $\mu\left(M \right)=0.77442660700998\dots$~.
\end{theorem}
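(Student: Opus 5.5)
The proof is a computer-assisted census computation built on the verified Margulis number algorithm of Section~\ref{sec:margulis}. The basic fact is that $\mu(M)$ is the largest $\epsilon$ for which the $\epsilon$-thin part of $M$ is a disjoint union of tubes around short geodesics and cusp neighborhoods. This $\epsilon$ is fixed by a finite amount of data: the short geodesics from the length spectrum, their tube radii, and the cusp areas. The algorithm computes the candidate values $\EpsilonCandidate$ at which two thin components first touch, or a component self-intersects, and returns the smallest of them, certified by interval arithmetic.

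I would argue in three stages. First, compute $\mu(\texttt{m003(-3,1)})$ with verified intervals. This gives an interval around $0.77442660700998\dots$ and identifies the pair of geodesics, or the geodesic self-intersection, that realizes it.

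Second, for every other manifold $M$ in the orientable cusped census up to 11 tetrahedra and the closed census, except \texttt{m007(3,1)}, certify that $\mu(M) > \mu(\texttt{m003(-3,1)})$. Running the full algorithm on all manifolds is affordable but wasteful. Since we only need a lower bound, I would instead show that the $\epsilon$-thin part is a disjoint union of standard pieces for $\epsilon$ slightly above the Weeks value, say $\epsilon_0 = 0.775$. This requires only the geodesics of length below $\epsilon_0$, obtained from the verified length spectrum stream truncated at $\epsilon_0$. For each such geodesic and each cusp, we then check with interval arithmetic that the tubes and horoball neighborhoods at level $\epsilon_0$ are embedded and pairwise disjoint. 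A manifold passes if the check succeeds. For any manifold where it fails or the intervals are inconclusive, run the full algorithm with higher precision, as in the SnapPy command \texttt{margulis()}, and verify the strict inequality directly.

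Third, we have to handle the excluded manifold and any near-ties. \texttt{m007(3,1)} is the Meyerhoff manifold, or possibly a manifold whose value is smaller or not separable. It is excluded by hypothesis, so no argument is needed for it, but we should record why it is excluded. Any manifold isometric to the Weeks manifold appears in the census under another name. Such manifolds are detected by verified isometry checks, or by identical length spectra together with volume, and are identified with the Weeks manifold rather than treated as counterexamples.

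The main obstacle is the second stage, for two reasons. The first is scale: there are hundreds of thousands of manifolds, and the length spectrum enumeration depends on the tiling algorithm, which can be slow when the injectivity radius is tiny or the thin part is complicated. The second is certification near ties: a manifold whose Margulis number is very close to the Weeks value would need high precision, or an exact arithmetic argument, to separate. My first move would be the cheap disjointness test at the fixed $\epsilon_0$, falling back to full verified computation only on the residual list, which I expect to be small.
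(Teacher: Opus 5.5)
This is essentially the paper's proof: the paper runs the verified Margulis algorithm (Algorithm~\ref{algo:optimalMargulis}) on geometric spun-triangulations of the census manifolds, computing $\mu(M)$ outright where feasible and otherwise using exactly your threshold check via the stopper of Remark~\ref{rem:margulisStopperFlag} (stopper $0.81$, certifying $\mu(M)>0.8$), notably for every 11-tetrahedron cusped manifold and for five closed ones. Two corrections: \texttt{m007(3,1)} is $\text{Vol}_3$, not the Meyerhoff manifold, and is excluded only because it has no known geometric spun-triangulation (an unverified computation gives $\mu = 0.8314\ldots$); and rerunning the full algorithm at higher precision is not a universal fallback, because it cannot succeed on a given spun-triangulation when a core curve meets the $\mu(M)$-thin neighborhood of a non-core geodesic or cusp, which is precisely why those five closed manifolds rely on the threshold check.
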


We excluded the manifold \texttt{m007(3,1)} (also known $\text{Vol}_3$) in the theorem because it has no known geometric spun-triangulation. An unverified computation indicates though that $\muCensus{m007(3,1)}$ is equal to its systole $0.8314429455\dots$; see Remark~\ref{rem:margulisVol3}. Also, the manifold \texttt{m003(-3,1)} is the minimum volume orientable hyperbolic 3-manifold; see \cite{gmm:minVol}.

The Margulis constant $\mu_n$ is the infimum of the optimal Margulis number $\mu(M)$ across all orientable, finite-volume hyperbolic $n$-manifolds $M$. Thus, we have:
\begin{corollary}
The Margulis constant $\mu_3$ is less than or equal to $\muCensus{m003(-3,1)}=0.77442660700998\dots$~.
\end{corollary}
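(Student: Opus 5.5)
The corollary follows from the definition of $\mu_3$ together with Theorem~\ref{thm:censusMargulisConstant}. By definition, $\mu_3 = \inf_M \mu(M)$, where the infimum runs over all orientable, finite-volume hyperbolic $3$-manifolds $M$. Hence for any single such manifold $M_0$ we get $\mu_3 \le \mu(M_0)$. I will take $M_0$ to be the Weeks manifold \texttt{m003(-3,1)}.

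First I check that $M_0$ lies in the class over which the infimum is taken. It is a closed, orientable hyperbolic Dehn filling of \texttt{m003}, so it has finite volume. It also appears in the closed SnapPy census considered in Theorem~\ref{thm:censusMargulisConstant}, which certifies its hyperbolic structure through a verified geometric spun-triangulation.

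Second, Theorem~\ref{thm:censusMargulisConstant} states that $\muCensus{m003(-3,1)} = 0.77442660700998\dots$, and this value is established rigorously using the interval-arithmetic algorithm of Section~\ref{sec:margulis}. Combining this with the first observation gives
\[
\mu_3 \le \mu\bigl(\texttt{m003(-3,1)}\bigr) = 0.77442660700998\dots
\]

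There is no real obstacle in this step. The minimality part of Theorem~\ref{thm:censusMargulisConstant}, that the Weeks manifold attains the smallest value in the census, is not needed for this inequality. Only the verified value of $\mu$ for this one manifold is used. All the substantive work, namely certifying $\mu(M_0)$ through the verified length spectrum and the Margulis-number algorithm, is already contained in the proof of Theorem~\ref{thm:censusMargulisConstant}.
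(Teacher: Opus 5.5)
Your proposal is correct and is the same argument the paper uses. The corollary follows directly from the definition of $\mu_3$ as the infimum of $\mu(M)$ over orientable finite-volume hyperbolic $3$-manifolds, together with the verified value $\muCensus{m003(-3,1)}=0.77442660700998\dots$ from Theorem~\ref{thm:censusMargulisConstant}. You are also right that only this one verified value is needed, not the census minimality.
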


The above argument is evidence for the following conjecture.

\begin{conjecture}
The Margulis constant $\mu_3$ is $\muCensus{m003(-3,1)}$.
\end{conjecture}

David Futer, David Gabai and Andrew Yarmola have independently found the same upper bound for $\mu_3$ (unpublished).

\begin{remark}
Note that \cite{shalen:genericMargulis} seems to indicate that \texttt{m027(-4,1)} is a counter example to the above conjecture. However, Culler most likely used the injectivity radius rather than diameter when obtaining an upper bound for $\muCensus{m027(-4,1)}$ (private communication with Culler and Shalen). When accounting for this factor of two, their result becomes $\muCensus{m027(-4,1)}<1.232$ which is consistent with our verified computation $\muCensus{m027(-4,1)}=1.035360533301757\dots$ and with the above conjecture.
\end{remark}

Having computed the systole for the cusped census manifolds, a natural question is: what is the smallest systole among all manifolds with a specified number $n$ of tetrahedra? For small $n$, Table~\ref{tab:systoles} lists it together with the census manifold achieving it. 

\begin{table}[h]
\begin{center}
\begin{tabular}{|r|c|l|l|}
\hline
Tetrahedra & Manifold & Systole & Dual 1-skeleton\\
\hline
\hline
2 & \texttt{m003} & 0.86255462766.... & \includegraphics[height=0.75cm]{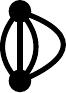}\\ \hline
3 & \texttt{m019} & 0.43153441294... & \includegraphics[height=0.75cm]{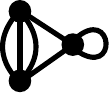}\\ \hline
4 & \texttt{m064} & 0.17593809986... & \includegraphics[height=0.75cm]{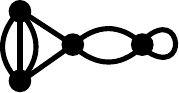} \\ \hline
5 & \texttt{m113} & 0.06942760262... & \includegraphics[height=0.75cm]{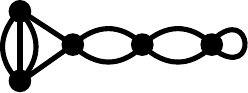} \\ \hline
6 & \texttt{s079} & 0.02679163676... & \multirow{5}{3em}{\includegraphics[height=0.75cm]{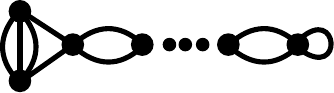}} \\ \cline{1-3}
7 & \texttt{v0159} & 0.01027531350...   & \\ \cline{1-3}
8 & \texttt{t00319} & 0.00393070922... & \\ \cline{1-3}
9 & \texttt{o9\_00639}   & 0.00150226276... & \\ \cline{1-3}
10 & \texttt{o10\_001279} & 0.00057393889... & \\ \cline{1-3}
11 & \texttt{o11\_002559} & 0.00021924349... & \\ \hline
\end{tabular}
\end{center}
\caption{\label{tab:systoles} The census manifolds achieving the shortest systole with the dual 1-skeletons of the respective triangulations.}
\end{table}

\newpage

These data lead us to the following conjecture:

\begin{conjecture} \label{conj:minSys}
Let $\Omega_n$ be the set of orientable hyperbolic manifolds which decompose into $n$ or fewer ideal tetrahedra. Let $F_1=1, F_2=1, F_{n}=F_{n-1}+F_{n-2}$ be the Fibonacci numbers. Then,
$$ \lim_{n\to\infty} \min_{M \in \Omega_n} \systole(M) \cdot \phi^{2n} = 1.266\dots$$ 
where $\systole(M)$ is the systole of $M$ and $\phi=\big(\,1+\sqrt{5}\,\big)\big/2$. Furthermore, for $n\geq 2$, the minimum is achieved by the manifold \texttt{m125(0,0)(}$F_n$\texttt{,}$F_{n+1}$\texttt{)} which has an ideal triangulation with $n$ tetrahedra obtained from the triangulation \texttt{cHcbbdh} by joining the two unglued faces (for $n=2$) or attaching a solid layered torus $(F_{n-1},F_n, F_{n+1})$ (for $n>2$). This triangulation is also the unique minimal ideal geometric triangulation and the canonical cell decomposition of \texttt{m125(0,0)(}$F_n$\texttt{,}$F_{n+1}$\texttt{)}.
\end{conjecture}


Note that the conjecture describes the manifold achieving the minimal systole as both a Dehn filling of the sister \texttt{m125} of the Whitehead link \texttt{m129} and as a triangulation. To see that these two indeed match, we follow the construction in \cite{gueritaud2010canonical} (also see \cite{FHH}): \texttt{m125} has a 4 tetrahedra triangulation (namely, with isomorphism signature \texttt{eLMkbbddddhapu}) such that the link of one of its two ideal vertices consists of two triangles belonging to two distinct tetrahedra. These two tetrahedra form an ``ananas''  which is the cone over the two triangle triangulation of a torus. We can now perform a Dehn filling on \texttt{m125} by removing the ananas (resulting in the triangulation \texttt{cHcbbdh} used by the conjecture) and replacing it by a solid layered torus.

Among the solid layered tori with a fixed number of layers, the one with the highest coefficients is obtained by an alternating pattern resulting in consecutive Fibonacci numbers. Heuristically, this also gives the highest Dehn filling and thus shortest length of the core curve. Thus, it is not surprising that these solid layered tori show up in and dominate the census triangulations with the minimal systole. We expect the pattern to continue and the conjecture to be true.


\begin{remark}
Conjecture~\ref{conj:minSys} uses a particular gluing pattern to attach the layered solid torus to \texttt{cHcbbdh}. Changing the pattern results in \texttt{m125(0,0)(}$F_{n+1}$\texttt{,} $F_n$\texttt{)} which achieves the second smallest systole in $\Omega_n$ for $n=5,\dots, 11$. We conjecture that this generalizes to all $n\geq 5$ and that the quotient of the first and second smallest systole in $\Omega_n$ tends to 1 as $n\to\infty$.
\end{remark}
  
\subsection{New algorithm}

In this paper, we propose a new algorithm to compute the length spectrum given a geometric triangulation of $M$. It builds on the tiling algorithm introduced in \cite{goerner:tiling} which tiles $\H^3\cong\tilde{M}$ by lifts of the geodesic tetrahedra. This is in contrast to the existing algorithm to compute the length spectrum by Hodgson and Weeks in \cite{hwcensus} which tiles by translates of a Dirichlet domain. Thus, the new algorithm avoids the problems that can arise when tiling with a Dirichlet domain. In particular, some Dirichlet domains cannot be verified with interval arithmetic (see \cite[Section~1.2]{goerner:tiling} for a more thorough discussion of the issues arising here).

The implementation of the Hodgson--Weeks algorithm in SnapPy \cite{SnapPy} is not verified and our original motivation was to make it verified using interval arithmetic. However, their algorithm requires a Dirichlet domain and, when trying to verify a Dirichlet domain, we run into the following problem: some Dirichlet domains (such as the ones computed by SnapPy for the census manifolds \texttt{m125} and \texttt{m129}; see \cite[Conjecture~1.2.2]{goerner:tiling}) have finite vertices of valence four or higher. To verify that the combinatorics of the Dirichlet domain about such a vertex is correct, we need to show that four or more planes in hyperbolic 3-space intersect in a common point. This cannot be done with interval arithmetic (unless all plane equations are exactly representable by floating point numbers) and requires exact arithmetic; also see Remark~\ref{rem:intervalEqualities}.

Thus, we instead base the new length spectrum algorithm on the tiling algorithm in \cite{goerner:tiling}. As Section~\ref{subsub:verifiedChain} explains, this allows us to make the entire chain of computations verified using interval arithmetics. This chain starts with verifying the geometric structure on a given ideal or finite triangulation and thus relies on a collaboration involving the third author \cite{hikmot} or previous work of the first author \cite{matthiasVerifyingFinite}, respectively. Note that we cannot use interval arithmetic to determine whether two geodesics have exactly the same length and thus obtain the multiplicity of a length in the length spectrum. However, we can still use interval arithmetic to verify that we have found all geodesics up to a certain length. The output of our algorithm is actually a stream of geodesics and we describe its semantics in Section~\ref{sec:putAlgo}.

Unverified and verified implementations of the new algorithm are available in SnapPy \cite{SnapPy} as of version 3.3\footnote{The methods \texttt{Manifold.length\_spectrum\_alt} and \texttt{Manifold.length\_spectrum\_alt\_gen} which use the new algorithm first appeared in SnapPy version~3.2 but have potential bugs related to the spine radius which are fixed in version~3.3.}. 

\subsection{Speed of the new algorithm} \label{sec:speedNew}

Tiling on a tetrahedral level (rather than with Dirichlet domains) also makes the new algorithm significantly faster in some cases. We show this in the following examples. All computation times are measured using SnapPy~3.3 \cite{SnapPy}, SageMath~10.5 \cite{SageMath} and Linux on an AMD Ryzen 5 7640U at 3.5Ghz.

Let $r$ be the spine radius of the Dirichlet domain that SnapPy computes for a manifold $M$. To compute any part of the length spectrum, the Hodgson--Weeks algorithm needs to a tile a ball of radius at least $2r$ whose volume grows as $e^{4r}=54.598\dots^r$ (see \cite[Proposition~3.5]{hwcensus}). This makes the computation of the length spectrum prohibitively expensive in some cases. As the following example shows, this particularly applies to manifolds with short geodesics or, equivalently, manifolds arising as Dehn fillings along long slopes.
\begin{example} \label{example:fast}
We compute the systole for the census manifold \texttt{o9\_00637} in SnapPy. SnapPy chooses a Dirichlet domain with only 38 faces but a large spine radius of $3.7173\dots$~. With the Hodgson--Weeks algorithm, the computation takes 16h 53m: %
\begin{verbatim}
   >>> M = ManifoldHP("o9_00637")
   >>> M.num_tetrahedra()
   9
   >>> M.length_spectrum(0.002)
   mult  length                    topology  parity
   1     0.00164... - 2.401...*I   circle    +
\end{verbatim} %
\end{example}   
The new algorithm can overcome this Achilles heel of the Hodgson--Weeks algorithm:
\begin{example} \label{ex:newAlgIdeal}
With the new algorithm, the computation for the same manifold \texttt{M} from the previous example now only takes 2.4s: %
\begin{verbatim}
   >>> M.length_spectrum_alt(count=1)[0].length.real()
   0.00164...
\end{verbatim}
\end{example}

Note, however, that the performance of the new algorithm is more sensitive to the choice of input spun-triangulation than that of the Hodgson--Weeks algorithm. This can be regarded as a bug --- or as a feature since we can often accelerate the new algorithm by applying some simple heuristics to the triangulation as discussed in Section~\ref{sec:perfTrigDep}. That is, we can often accelerate the new algorithm by guessing a short geodesic, drilling it (see \cite{goerner:drilling}) and immediately filling the corresponding new cusp. In other words, that is by turning a short geodesic into a core curve. We can often find such a short geodesic among the generators of the (unsimplified) fundamental group. Here is an example:
\begin{example} \label{ex:newAlgSpun}
With the new algorithm, the entire computation for the same manifold \texttt{M} from the previous two examples now only takes 120ms:
\begin{verbatim}
   >>> N = M.drill_word("e")
   >>> N.dehn_fill((1,0),-1)   # equivalent to m129(0,0)(-55,34)
   >>> N.num_tetrahedra()
   4
   >>> N.length_spectrum_alt(count=1)[0].length.real()
   0.00164...
\end{verbatim}
\end{example}
The acceleration in the last example is due to the following reasons:
\begin{itemize}
\item The spun-triangulation \texttt{N} of the given manifold has fewer tetrahedra.
\item When tiling, the new algorithm avoids a large tube about the (short) core curve of \texttt{N}. More precisely, the new algorithm chooses a ``thick-like'' part of the manifold which is the complement of embedded and disjoint cusp neighborhoods and tubes about core curves (like the thick-part of the thick-thin composition); see Section~\ref{sec:generalizedCuspCross}. The new algorithm uses truncated tetrahedra to tile this thick-like part rather than the entire manifold; see Section~\ref{sec:tilingIncompleteAboutPoint}.
\end{itemize}

In particular, small deformations to the geometric structure of a fixed triangulation barely impact the time to find all geodesics up to a given cut-off length with the new algorithm (note though that very large Dehn filling coefficients run into the problem of just computing the core curve; see Example~\ref{ex:largeDehn} and Section~\ref{sec:largeFillCoef} for future work addressing this):
\begin{example}
The following two examples take 82ms and 83ms, respectively:
\begin{verbatim}
    >>> Manifold("m004").length_spectrum_alt(max_len=1.1)
    [Length                            Core curve  Word
     1.087070... + 1.722768...*I       -           bC,
     1.087070... - 1.722768...*I       -           a]
    >>> Manifold("m004(34,55)").length_spectrum_alt(max_len=1.1)
    [Length                            Core curve  Word
     0.000580... - 2.399137...*I       Cusp 0      abCAcBabCAc...BB...,
     1.086777... + 1.722198...*I       -           bC,
     1.087217... - 1.722144...*I       -           a] 
\end{verbatim}
\end{example}

\begin{example} \label{example:censusTimings}
The following table shows the time to compute the systole for all but one manifold of the SnapPy orientable census (including the recent extension to 10 tetrahedra by \cite{li:10tet}). For the new algorithm and cusped manifolds, we apply the heuristics described in Section~\ref{sec:perfTrigDep} to the input triangulation (see function in Section~\ref{sec:codeSurgeryDescription}). For the closed manifolds, we use the geometric spun-triangulations from \cite[\texttt{ClosedManifolds.zip}]{hikmot}.
\begin{center}
\begin{tabular}{r||r|r||r|r|r}
Census & Tetrahedra & Manifolds & Hodgson--Weeks & New algorithm & New algorithm\\
 & & & 212bits & 500bits & verified, 500bits \\
 &  &  & C++ & Python & Python \\ \hline \hline
Cusped & $\leq 9$ & 61911 & 11d 13h 23m & 4h 19m & 6h \phantom{0}3m \\
& 10 & 150730 & >100d\rlap{\footnotemark}\phantom{ 13h 23m} & 13h 50m & 19h 11m \\ \hline
Closed &  & 11030 & 1h \phantom{0}0m & 1h \phantom{0}2m & 1h 34m\\
without \texttt{m007(3,1)} & & & &
\end{tabular}
\end{center}
\footnotetext{Conservative estimate: the Dirichlet domain computed by SnapPy has a spine radius larger than the one in Example~\ref{example:fast} for over 300 of these manifolds.}
Note that the Hodgson--Weeks algorithm takes more time for the single manifold in Example~\ref{example:fast} than the new algorithm took for the entire census up to 9 tetrahedra.
\end{example}

\begin{remark} \label{rem:finAccel}
Future work includes implementing the new length spectrum algorithm for geometric finite triangulation which might further accelerate the algorithm for closed manifolds; see Section~\ref{sec:futureFinTrig}.
\end{remark}

\subsection{Output of the new algorithm} \label{sec:putAlgo}

The length spectrum of an oriented, finite-volume hyperbolic 3-manifold $M$ has infinitely many elements. To reduce it to a finite problem, the Hodgson--Weeks algorithm takes a real cut-off length $\mu$ and lists the elements up to that length. We are taking a different approach here and abstract the length spectrum as a stream. In other words, we provide an iterator that can be queried for the next element of the length spectrum; also see Example~\ref{example:nextUse} later. Note that the abstraction as a stream is not perfect: the precision is given as input to the algorithm and the algorithm only produces finitely many geodesics before failing with an error indicating that the precision is insufficient. Future work might automatically increase the precision as needed. The output stream of the new algorithm is 
\[
(\lambda_0, w_0), (\lambda_1, w_1), \dots
\]
where $\lambda_i$ is the complex length of a primitive, unoriented closed geodesic $\gamma$ represented by a word $w_i$ in the face-pairing representation of $\pi_1(M)$. Here, we assume that the input geometric triangulation $\myTrig$ of $M$ comes with the necessary choices to define a fundamental polyhedron $P$ with labeled face-pairings; see \cite{goerner:tiling} for details.

We focus on verified computations with interval arithmetic from now on. Note that the intervals for the lengths of two geodesics might overlap but this does not prove that they have exactly the same length. Hence, we cannot group the geodesics by length or list them in exactly the right order. However, the algorithm still ensures the following guarantees for the output stream:
\begin{itemize}
\item For each $i$, every primitive, unoriented closed geodesics $\gamma$ in $M$ of real length less than or equal to the left-endpoint $\underline{\RePart(\lambda_i)}$ of $\RePart(\lambda_i)$ has a representative among $(\lambda_0, w_0), \dots, (\lambda_{i-1}, w_{i-1})$.
\item No unoriented closed geodesic $\gamma$ in $M$ appears more than once in the stream.
\item The left-endpoint $\underline{\RePart(\lambda_i)}$ of $\RePart(\lambda_i)$ is non-decreasing. We require this to avoid confusion --- even though it is not strictly necessary for many applications such as the ones we list immediately below. \label{item:ensureProgressProperty}
\item For any primitive, unoriented closed geodesics $\gamma$ in $M$, we can pick a high enough precision such that $\gamma$ has a representative $(\lambda_i, w_i)$ with an arbitrary tight interval for $\lambda_i$ before the algorithm fails (because of insufficient precision).
\end{itemize}

\subsection{Immediate applications} \label{sec:immApp}

In particular, an application can use this stream to compute:
\begin{itemize}
\item The systole which is contained in the interval $\RePart(\lambda_0)$ --- even though $(\lambda_0, w_0)$ might not represent a geodesic realizing the systole. See Examples~\ref{ex:newAlgIdeal} and~\ref{ex:newAlgSpun}.
\begin{proof}
The left-endpoint $\underline{\RePart(\lambda_0)}$ is a lower bound for the systole since 
no geodesic has real length less than $\underline{\RePart(\lambda_0)}$. The right-endpoint $\overline{\RePart(\lambda_0)}$ is an upper bound for the real length of some closed geodesic.
\end{proof}
\item A superset $(\lambda_0, w_0), \dots, (\lambda_{i-1}, w_{i-1})$ of all primitive, unoriented geodesics shorter than a given cut-off length $\mu$ by finding $i$ with $\RePart(\lambda_i)\geq \mu$ (also see Remark~\ref{rem:intermediateElements}). See Example~\ref{ex:cutOff}. \label{item:applicationCutoff}
\item A superset $(\lambda_0, w_0), \dots, (\lambda_{i-1}, w_{i-1})$ of the $j$ shortest primitive, unoriented geodesics by finding $i\geq j$ with
$\max(\RePart(\lambda_0), \dots, \RePart(\lambda_{i-1}))<\RePart(\lambda_i)$ or, equivalently,
$\RePart(\lambda_0) < \RePart(\lambda_i)$, \dots and  $\RePart(\lambda_{i-1})<\RePart(\lambda_i)$. See Examples~\ref{example:nextUse} and~\ref{ex:geoCount}.
\end{itemize}

\begin{example} \label{example:nextUse}
Here is an example of finding the two shortest geodesics (which happen to have the same complex length but interval arithmetic cannot prove it). Note that SageMath \cite{SageMath} is required for verified computations.
\begin{verbatim}
  sage: from snappy import Manifold
  sage: M=Manifold("m125")
  sage: l=M.length_spectrum_alt_gen(verified=True, bits_prec=75)
  sage: g0=next(l); g0
  Length                                      Core curve  Word
  0.962423650119207?  - 3.141592653589793? *I -           cD
  sage: g1=next(l); g1
  0.9624236501192069? - 3.1415926535897933?*I -           a
  sage: g2=next(l)
  sage: g0.length.real().max(g1.length.real()) < g2.length.real()
  True
\end{verbatim}
\begin{remark}
Do not use \texttt{max(g0.length.real(), g1.length().real())} since Python's built-in \texttt{max} produces the \emphasisText{wrong} result for SageMath intervals!
\end{remark}

\begin{example} \label{ex:geoCount}
A convenience method to do the same as in the previous example:
\begin{verbatim}
  sage: Manifold("m125").length_spectrum_alt(count=1, verified=True, bits_prec=75)
  [Length                                      Core curve  Word
   0.962423650119207?  - 3.141592653589793? *I -           cD,
   0.9624236501192069? - 3.1415926535897933?*I -           a]
\end{verbatim}
\end{example}
\end{example}

\begin{example} \label{ex:cutOff}
We can also pass a cut-off length $\lambda$ to the new method:
\begin{verbatim}
    Manifold("m125").length_spectrum_alt(max_len=1.0, verified=True, bits_prec=75)
    [Length                                      Core curve  Word
     0.962423650119207?  - 3.141592653589793? *I -           cD,
     0.9624236501192069? - 3.1415926535897933?*I -           a]
\end{verbatim}
\end{example}

\begin{remark} \label{rem:intermediateElements}
The new length spectrum algorithm can easily be modified to emit intermediate $(\lambda_i, \text{--})$ in the stream that do not correspond to geodesics but still serve as indicator that all geodesics up to real length less than $\RePart(\lambda_i)$ have been enumerated. The above applications can use this as a performance optimization.
\end{remark}

\subsection{Steps of the length spectrum algorithm} \label{sec:defPrelength}

We first consider the case where the triangulation has a complete geometric structure. Let $\Gamma\subset\SO(1,3)$ be such that $M\cong\Gamma\backslash\H^3$ where $\H^3=\{ x \in \R^{1,3} : x_0>0~\mbox{and}~x\cdot x = -1\}$ is the hyperboloid model. Let $P\subset\H^3$ be a fundamental polyhedron for $M$. For simplicity, whenever we write $m\in\Gamma$, we also carry along a corresponding word $w$ in the face-pairing representation corresponding to $P$ --- starting with the generators of $\Gamma$ in the algorithms in \cite{goerner:tiling}.

The new length spectrum algorithm follows similar steps as the Hodgson--Weeks algorithm but differs in the implementation of each step:
\begin{itemize}
\item \label{step:Spine} Compute (an upper bound for) the radius of a \emphasisText{spine} $S\subset P$ (see Algorithm~\ref{algo:spine}). We will use that each closed geodesic intersects the image of $S$ in $M$ (or, if $M$ is incomplete, is a core curve); see Section~\ref{sec:spineDef}.\\
This step differs from Hodgson--Weeks in that the radius is per-tetrahedron rather than for the entire fundamental domain. 
\item \label{step:preLen} Tile to compute a \emphasisText{pre-length spectrum stream $(\mu_0, m_0), (\mu_1, m_1), \dots$ for $S$} (see Algorithm~\ref{algo:preLen}) with $\mu_i\in\R^{\geq 0}$ and $m_i\in\Gamma\setminus\{\Id\}$. It is an intermediate result that includes all geodesics up the cut-off length $\mu_i$ but might also contain parabolics, non-primitives, duplicates and longer geodesics.\\
More precisely, the pre-length spectrum stream satisfies the following for each $i$: Let $\gamma$ be a primitive, unoriented closed geodesic in $M$ of real length $\leq \mu_i$ that intersects $S$ in $M$ (which is automatic if $M$ is complete). Then $\gamma$ has a (not necessarily unique) representative $m_j$ among $m_0, \dots, m_{i-1}$ such that $m_j$ fixes a line $x_0x_1\subset\H^3$ that intersects the spine $S\subset P$.\\
This step differs from Hodgson--Weeks in that we tile with the tetrahedra rather than the fundamental domain using the tiling algorithm from \cite{goerner:tiling}.
\item \label{step:filterPreLen} Filter and sort the pre-length spectrum stream (see Algorithm~\ref{algo:lenSpec}):
\begin{itemize}
\item Sort so that (the left end-point) of the real length of the geodesics is non-decreasing.
\item Remove elements where $m\in\Gamma$ is not fixing a line intersecting the spine $S$. In particular, remove elements where $m\in\Gamma$ is parabolic.
\item Remove elements duplicating an earlier unoriented geodesic --- including multiples of an earlier geodesic.
\end{itemize}
This step differs from Hodgson--Weeks in that we de-duplicate geodesics using the data structure in Section~\ref{sec:setOfGeos} rather than by applying a large set of Deck transformations.
\end{itemize}

In terms of further optimization, 
we have the following question:
\begin{question} \label{question:earlyPrune}
Is there a more efficient algorithm to compute the length spectrum algorithm that prunes early, that is during the tiling to obtain the pre-length spectrum?
\end{question}

\subsection{Overview}

We use the terminology and notation from \cite{goerner:tiling} throughout this paper.

Section~\ref{sec:verified} shows the chain of verified computations leading up to the length spectrum stream in.

Section~\ref{sec:spineDef} explains the role of a spine to compute the length spectrum.

Sections~\ref{sec:spineComplete} to~\ref{sec:setOfGeos} show how to tile with (untruncated) tetrahedra to compute the length spectrum for finite and ideal triangulations with a complete geometric structure: Sections~\ref{sec:spineComplete} to~\ref{sec:filtAndSort} explain each one of the three steps outlined in Section~\ref{sec:defPrelength} already. They are followed by implementation details: utility functions in Section~\ref{sec:geoUtilities} and the data structure to de-duplicate geodesics in Section~\ref{sec:setOfGeos}.

Sections~\ref{sec:generalizationToSpun} to~\ref{sec:coreCurveAvoidance} show how to generalize the length spectrum algorithm to ideal triangulations with an incomplete geometric structure that can be completed by attaching circles. Section~\ref{sec:generalizationToSpun} lists the problems arising from the existence of the incompleteness locus. In particular, the tiling algorithm in \cite{goerner:tiling} produces infinitely many ideal (untruncated) tetrahedra without ever reaching the incompleteness locus. To overcome this, we pick embedded and disjoint cusp neighborhoods and tubes about core curves in Section~\ref{sec:generalizedCuspCross}. We can think of their union as ``thin-like part'' and their complement as ``thick-like part'' of the manifold. The idea is to only tile the ``thick-like part''. More precisely, we use truncated tetrahedra to tile the lift $U\subset\H^3$ of the thick-like part rather than all of $\H^3$. After Section~\ref{sec:spineIncomplete} adjusts the spine to avoid the thin-like part, Section~\ref{sec:tilingIncompleteAboutPoint} can use this idea to modify the tiling algorithm and compute a pre-length spectrum stream for a geometric spun-triangulation. Sections~\ref{sec:addCoreCurves} to~\ref{sec:coreCurveAvoidance} revisit the steps and data structures of the length spectrum algorithm (Algorithm~\ref{algo:lenSpec}) to solve the remaining problems arising from the incompleteness locus.

Note that SnapPy always uses the generalized implementation from Sections~\ref{sec:generalizationToSpun} to~\ref{sec:coreCurveAvoidance}.

Section~\ref{sec:margulis} shows the algorithm to compute the optimal Margulis number of a hyperbolic $3$-manifold.

Section~\ref{sec:cosmetic} verifies the surgery conjecture for the SnapPy census to prove Theorem~\ref{thm:cosmetic}.

Section~\ref{sec:performance} discusses the performance of the new algorithm and compares it to that of the Hodgson--Weeks algorithm. In particular, we point users of the new algorithm to Section~\ref{sec:perfTrigDep} for choosing spun-triangulations resulting in better performance.

We conclude with a discussion of support for geometric finite triangulations as future work in Section~\ref{sec:futureFinTrig}.

Appendix~\ref{App:Notation} lists the notation used throughout this paper.

\subsection{Acknowledgements}

The authors thank David Futer, Jessica Purcell, and Saul Schleimer for their encouragement and interest in our work and for testing our algorithm. The authors also grateful to Henry Segerman for many helpful discussions on topics throughout the paper and Ken Baker and Qiuyu Ren for helpful discussions about the exceptional fillings that needed to be analyzed by hand for the cosmetic surgery conjecture. The authors especially wish to acknowledge Marc Culler and Nathan Duffield for their continuing work on and maintenance of SnapPy \cite{SnapPy}. This project was initiated using support from U.S. National Science Foundation grants DMS 1107452, 1107263, 1107367, ``RNMS: GEometric structures And Representation varieties (the GEAR Network)'' which funded a short-term visit for the authors at the University of California Davis. The authors also thank the Oklahoma State University for hosting them as a group in the formational stages of this project.

\subsection{Use of AI tools}

The authors used ChatGPT in the late stages of writing to tweak the grammar. No LLMs were used to structure arguments or generate citations. In particular, the authors take full intellectual responsibility for the content of this article.

\section{Verified computation} \label{sec:verified}

\subsection{Conventions} \label{subsec:intervalNotation}

We use the same conventions as \cite[Section~2.2]{goerner:tiling}. In particular, we denote by $\underline{a}$ and $\overline{a}$ the left and right endpoint of an interval $a=[\underline{a}, \overline{a}]$ and $a<b$ means $\overline{a} < \underline{b}$.

\subsection{Chain of verified computation} \label{subsub:verifiedChain}

The chain of verified computations with interval arithmetic leading to the length spectrum is as follows:
\begin{enumerate}
\item Verified geometric structure for the given triangulation $\myTrig$.\\
Here, we are primarily focusing on the geometric structure given by cross ratios $z_0,\dots, z_{n-1}\in\C\setminus\{0,1\}$ (also called shapes) parameterizing ideal geodesic tetrahedra. These cross ratios are given implicitly as (the unique) solution to the gluing equations. As such, we need to use methods such as the interval Newton method or the Krawczyk test to obtain (complex) intervals guaranteed to contain a solution; see \cite{hikmot}. Note that these methods require a system of equations with an invertible (and, in particular, square) Jacobian matrix. Since the full system of edge and cusp gluing equations is, however, overdetermined, we need to remove some of the gluing equations. \cite{moser} shows that we can do this without introducing unsound solutions; also see \cite[Appendix]{matthiasVerifyingFinite}. For finite triangulations, we refer the reader to \cite{matthiasVerifyingFinite}.
\end{enumerate}
In all subsequent steps in the chain, new quantities (such as the vertices of a developed fundamental polyhedron) are given explicitly in terms of previous quantities (such as the shapes). If the interval implementation of each operator fulfills the inclusion principle, then, inductively, the intervals for all new quantities also contain their true values (that is, the values obtained when using exact arithmetic). Furthermore, we can use these intervals to prove inequalities (see \cite[Section~2.2]{goerner:tiling}) necessary to ensure the correctness. The subsequent steps are:
\begin{enumerate}[resume]
\item \cite[Section~4.7]{goerner:tiling} computes a developed fundamental polyhedron $P$.
\item Algorithm~\ref{algo:spine} computes (lower bounds for) the radii $r(S_t)$ for a spine $S$.
\item Algorithm~\ref{algo:preLen} computes the pre-length spectrum stream using the verified tiling algorithm \cite[Algorithm~7.3.1]{goerner:tiling}.
\item Algorithm~\ref{algo:lenSpec} then processes the pre-length spectrum to compute the length spectrum stream.
\end{enumerate}

We revisit these steps later to generalize to geometric spun triangulations.

\begin{remark} \label{rem:intervalEqualities}
Note that we cannot prove equalities using interval arithmetic (unless the intervals have zero length which can only happen if the quantities can be exactly represented by a floating point number and, thus, are necessarily rational).
This is, for example, necessary to make the algorithm in SnapPy to compute a Dirichlet domain verified when the domain has a finite vertex adjacent to more than 3 faces. That is, we can obtain intervals for the vertex coordinates by intersecting planes supporting 3 of the adjacent faces. However, we cannot use interval arithmetic to prove that these vertex coordinates fulfill the plane equation for any of the remaining faces.
\end{remark}

\section{Spine and tiling radius} \label{sec:spineDef}

We use the terminology and notation from \cite[Section~4]{goerner:tiling}. Let $M$ be an oriented finite-volume hyperbolic $3$-manifold that is either complete or can be completed to $M_\text{filled}$ by attaching circles we call \emphasisText{core curves}. Let $\myTrig$ be an ideal or finite triangulation with charts embedding each tetrahedron $T_t$ into $\H^3$ such that they form a developed fundamental polyhedron $P=\cup T_t\subset\H^3$ for $M\subset M_\text{filled}$ with respect to $\Gamma\subset\SO(1,3)$ where $M_\text{filled}\cong\Gamma\backslash\H^3$.

\subsection{Spine} \label{sec:spineDefDef}

Here, a \emphasisText{spine} $S$ for the triangulation $\myTrig$ is the union of a topologically embedded butterfly $S_t$ in each tetrahedron $T_t$ as in \cite[Figure~1.5]{matveev:AlgorithmicTop} matching up with the face-pairings. In other words, a spine $S$ is an embedding of the dual $2$-skeleton of $\myTrig$ in $M$. For concreteness, we consider spines consisting of totally geodesic triangles until Section~\ref{sec:generalizationToSpun}. We abuse notation and denote by $S_t$ both the embedded butterfly in the manifold $M$ or the developed fundamental polyhedron $P\subset \H^3$. Note that each connected component of $M\setminus S$ corresponds to a vertex of $\myTrig$ and topologically is either a ball if the vertex is finite or the product of the vertex link and $(0,1)$ otherwise. We refer the reader to \cite[Chapter~1]{matveev:AlgorithmicTop} for the more general definition of spine.  We pick a basepoint $\vec{s}_t\in S_t$ for each $S_t$ and denote by $r(S_t)$ the \emphasisDef{spine radius}, that is the radius of $S_t\subset\H^3$ about $\vec{s}_t$.

Note that the pre-length spectrum algorithm only needs an upper bound $\overline{r}(S_t)$ for the spine radius $r(S_t)$ and not the spine itself. However, we will construct a spine in the proofs of Propositions~\ref{prop:spine} and~\ref{prop:spunSpine} to show that the upper bound used in these propositions is correct.

\subsection{Tiling radius}

We can use the spine radius to relate the tiling radius and the cut-off length for the length spectrum. For the new algorithm, we use a version of \cite[Proposition~3.5]{hwcensus} that is per-tetrahedron and uses $d(\vec{s}_t, mS_t)$ instead of $d(\vec{s}_t, m\vec{s}_t)$ and thus has different constants:
\begin{proposition} \label{prop:spineAndTilingRadius}
Let $M$, $M_\text{filled}$, $\myTrig$, $P=\cup T_t$ and $\Gamma$ as above. Pick a spine $S=\cup S_t$ with $S_t\subset T_t$ for each tetrahedron $T_t$.
Let $\mu>0$. Let $\gamma$ be a primitive, unoriented closed geodesic in $M_\text{filled}$ of real length $\leq\mu$. Then $\gamma$ is either a core curve or intersects $S$ in, say, $S_t$. In the latter case, $\gamma$ is represented by $m\in\Gamma$ with
\[
d(\vec{s}_t, mS_t)\leq \cosh^{-1}\left(\cosh(r(S_t))\cosh(\mu)\right).
\]
\end{proposition}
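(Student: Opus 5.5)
We split the argument into a topological part (every non-core closed geodesic meets $S$) and a metric part (the distance estimate via a right-angled triangle).

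\emph{Topological part.} Suppose $\gamma$ is not a core curve and does not meet $S$. Since core curves are disjoint from $M$ and $\gamma$ is a closed geodesic distinct from them, $\gamma$ lies in $M$, and hence in a component of $M\setminus S$. By the description in Section~\ref{sec:spineDefDef}, such a component is either a ball (finite vertex) or a product $L\times(0,1)$ of a vertex link $L$ with an interval (ideal vertex).

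In the ball case, $\gamma$ is null-homotopic in $M_\text{filled}$, which is impossible for a closed geodesic in a hyperbolic manifold.

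In the ideal case, $\gamma$ is homotopic into the vertex link $L$. If the vertex is a complete cusp, $L$ is a torus whose fundamental group consists of parabolics, so $\gamma$ would be represented by a parabolic element; this is again impossible. If the vertex is incomplete, the component $L\times(0,1)$ together with the attached core curve is a tubular neighborhood of that core curve in $M_\text{filled}$. Its fundamental group is generated by the core curve, so $\gamma$ is freely homotopic to a power of it. Since geodesics in a free homotopy class are unique, $\gamma$ is that core curve, as $\gamma$ is primitive. This contradicts the assumption that $\gamma$ is not a core curve.

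\emph{Metric part.} Now let $\gamma$ meet $S_t$ at a point $p$, and take the lift of $p$ lying in $S_t\subset P$. The lift of $\gamma$ through $p$ is a line $\ell$ that is the axis of some loxodromic $m\in\Gamma$ representing $\gamma$ (with the appropriate orientation). Then $m$ translates along $\ell$ by distance $\RePart(\lambda)\le\mu$, so $mp\in mS_t$ lies on $\ell$ with $d(p,mp)\le\mu$. Hence
\[
d(\vec{s}_t, mS_t)\le d(\vec{s}_t, mp).
\]
Bounding this naively by the triangle inequality gives only $r(S_t)+\mu$, which is too weak. The sharper $\cosh$ bound requires a better choice of the intersection point.

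\emph{Choice of the point.} Let $q$ be the orthogonal projection of $\vec{s}_t$ onto $\ell$. We would like $q$ to be a point of $\gamma\cap S_t$, and the hardest step is arranging this. My plan is to use the freedom to choose $S$: the proof will construct the spine rather than use an arbitrary one. If we cannot ensure that $q\in S_t$, we instead use $h=d(\vec{s}_t,\ell)\le d(\vec{s}_t,p)\le r(S_t)$ and compare against $mq$ directly. Write $x=d(q,p)$ for the signed position of $p$ along $\ell$. We have freedom in choosing which lift of $m$ to use, namely $m$ or $m^{-1}$, i.e., the direction of translation. Choosing the direction opposite to $x$ gives, by the hyperbolic Pythagorean theorem,
\[
\cosh d(\vec{s}_t, mp)=\cosh h\cosh(x\pm\ell_\gamma)\quad\text{and}\quad \cosh r\ge\cosh d(\vec{s}_t,p)=\cosh h\cosh x.
\]
The target inequality $\cosh h\cosh(|x|-\ell_\gamma)\le\cosh r\cosh\mu$ then requires $\cosh(|x|-\ell_\gamma)\le\cosh x\cosh\ell_\gamma$, which holds since $\cosh(a-b)\le\cosh a\cosh b$ for $a,b\ge0$.

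\emph{Conclusion.} With this choice of $m$ or $m^{-1}$, both of which represent the unoriented $\gamma$, we obtain
\[
d(\vec{s}_t, mS_t)\le d(\vec{s}_t,mp)\le\cosh^{-1}\bigl(\cosh(r(S_t))\cosh(\mu)\bigr).
\]
The main obstacle is verifying the signed-position bookkeeping in this last step, which is where the sharper constant comes from.
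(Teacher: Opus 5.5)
Your proof is correct and is essentially the paper's argument. Choosing $m$ versus $m^{-1}$ so that $mp$ moves toward the foot $q$ is the same as the paper's choice of the sub-triangle with angle $\le\pi/2$ at the intersection point, and your Pythagorean computation is the law-of-cosines estimate $\cosh d(\vec{s}_t,mp)\le\cosh d(\vec{s}_t,p)\cosh\RePart(\lambda)$ written in coordinates.

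Two small corrections:
\begin{itemize}
\item The claim that a non-core closed geodesic must lie in $M$ is false, since a geodesic can cross a core curve transversally (the paper notes this in Section~\ref{sec:introGeoCrossCore}). Argue instead with the components of $M_\text{filled}\setminus S$: an incomplete-cusp component together with its core curve is the open solid torus that your third case already handles.
\item The idea of constructing a special spine is not available, because the statement is for an arbitrary spine. It is also not needed, so that paragraph should simply be dropped.
\end{itemize}
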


\begin{figure}[ht]
\begin{center}
\begingroup%
  \makeatletter%
  \providecommand\color[2][]{%
    \errmessage{(Inkscape) Color is used for the text in Inkscape, but the package 'color.sty' is not loaded}%
    \renewcommand\color[2][]{}%
  }%
  \providecommand\transparent[1]{%
    \errmessage{(Inkscape) Transparency is used (non-zero) for the text in Inkscape, but the package 'transparent.sty' is not loaded}%
    \renewcommand\transparent[1]{}%
  }%
  \providecommand\rotatebox[2]{#2}%
  \newcommand*\fsize{\dimexpr\f@size pt\relax}%
  \newcommand*\lineheight[1]{\fontsize{\fsize}{#1\fsize}\selectfont}%
  \ifx\svgwidth\undefined%
    \setlength{\unitlength}{359.12533257bp}%
    \ifx\svgscale\undefined%
      \relax%
    \else%
      \setlength{\unitlength}{\unitlength * \real{\svgscale}}%
    \fi%
  \else%
    \setlength{\unitlength}{\svgwidth}%
  \fi%
  \global\let\svgwidth\undefined%
  \global\let\svgscale\undefined%
  \makeatother%
  \begin{picture}(1,0.30432017)%
    \lineheight{1}%
    \setlength\tabcolsep{0pt}%
    \put(0,0){\includegraphics[width=\unitlength,page=1]{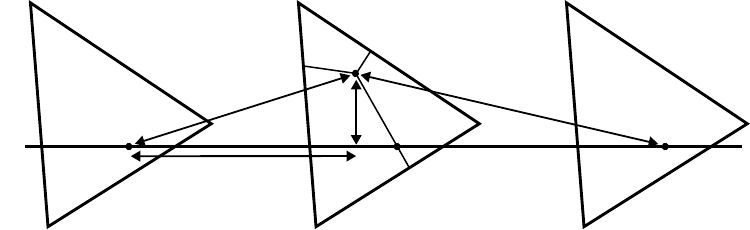}}%
    \put(0.45461159,0.22073605){\makebox(0,0)[lt]{\lineheight{1.25}\smash{\begin{tabular}[t]{l}$p$\end{tabular}}}}%
    \put(-0.00050826,0.10550535){\makebox(0,0)[lt]{\lineheight{1.25}\smash{\begin{tabular}[t]{l}$\gamma$\end{tabular}}}}%
    \put(0.52711522,0.12290651){\makebox(0,0)[lt]{\lineheight{1.25}\smash{\begin{tabular}[t]{l}$s$\end{tabular}}}}%
    \put(0.14160874,0.12290651){\makebox(0,0)[lt]{\lineheight{1.25}\smash{\begin{tabular}[t]{l}$ms$\end{tabular}}}}%
    \put(0.86009832,0.12496826){\makebox(0,0)[lt]{\lineheight{1.25}\smash{\begin{tabular}[t]{l}$m^{-1}s$\end{tabular}}}}%
    \put(0.28774431,0.07258973){\makebox(0,0)[lt]{\lineheight{1.25}\smash{\begin{tabular}[t]{l}$\leq\mu$\end{tabular}}}}%
    \put(0.42659587,0.14789877){\makebox(0,0)[lt]{\lineheight{1.25}\smash{\begin{tabular}[t]{l}$\leq r$\end{tabular}}}}%
  \end{picture}%
\endgroup%

\end{center}
\caption{Tiling radius from spine radius. Compare to \cite[Figure~2]{hwcensus}. \label{fig:radFromSpine}}
\end{figure}

\begin{proof}
Assume that $\gamma$ does not intersect $S$. Then $\gamma$ is contained in a dual 3-domain, that is a connected component of $M\setminus S$. Thus, $\gamma$ is a peripheral curve in $M$ and, thus, either contractible or a core curve in $M_\text{filled}$. Let $\lambda$ be complex the length of $\gamma$. Let $s\in M$ be a point where $\gamma$ intersects $S_t$. Let $p=\vec{s}_t$ and $r=r(S_t)$. Lift $s$, $p$ and $\gamma$ to $\H^3$ such that $s, p\in T_t\subset P$ and $s\in\gamma\subset\H^3$. Let $m\in\Gamma$ be one of the two primitive elements fixing $\gamma$. See Figure~\ref{fig:radFromSpine}. The points $p$, $ms$ and $m^{-1}s$ span a hyperbolic triangle. Split the triangle along the line $sp$. At least one of the resulting triangles has angle $\leq \pi / 2$ at $s$. Assume it is the triangle $s$, $p$ and $ms$ by replacing $m$ by $m^{-1}$ if necessary. The edge between $s$ and $p$ has length $\leq r$. The edge between $s$ and $ms$ has length $\RePart(\lambda)\leq \mu$. The edge between $p$ and $ms$ has length $d(p,ms)$ less than the hypothenuse of a right triangle with lengths $r=r(S_t)$ and $\mu$. We have $d(\vec{s_t},mS_t)=d(p, mS_t)\leq d(p, ms)$.
\end{proof}


\section{Computing a spine radius for a complete triangulation} \label{sec:spineComplete}

In this section, we show how to compute a spine radius $r(S_t)$ for each tetrahedron when the geometric structure is complete. Note that we need to construct a spine $S\subset P$ to prove that the spine radius is sufficient, but we do not need to know the actual spine to compute the length spectrum.

\subsection{Tetrahedral vertex vectors} \label{sec:spineVertexVectors}

We allow $\myTrig$ to be ideal or finite. Let $P$ be a developed fundamental polyhedron for $\myTrig$ as defined in \cite[Section~4.7]{goerner:tiling}. In the ideal case, we start by picking cusp neighborhoods of equal area for all cusps. The choice of cusp neighborhoods gives a consistent lift of each the ideal vertex of each tetrahedron $T_t$ to a light-like vector $\vec{v}_t^v\in\posLightCone$ such that the horoball $B(\vec{v}_t^v)$ corresponds to the respective cusp neighborhood. To compute the $\vec{v}_t^v$, we refer the reader to \cite[Section~4.6]{goerner:tiling}. In the finite case, the vertex $\vec{v}_t^v \in \H^3$ does not need a lift. In both cases, the resulting $\vec{v}_t^v$ are matching under the face-pairing matrices $g^f_t$.

\subsection{Spine radius} \label{sec:spinePoints}

\begin{algorithm}[h]
\begin{tabular}{rp{15.4cm}}
{\bf Input:} & Developed fundamental polyhedron $P=\cup T_t\subset\H^3$ with vertices $\vec{v}_t^v$ matching under the face-pairing matrices $g^f_t$.\\\
{\bf Output:} & For a spine $S\subset P$: For each tetrahedron $T_t$, basepoint $\vec{s}_t\in S_t$ and radius $r(S_t)\in\R^+$ (about $\vec{s}_t$) of the restriction of $S_t=S\cap T_t$ to $T_t$.\\
{\bf Algorithm:}\\
\end{tabular}
\begin{algorithmSteps}
\item For each tetrahedron $T_t$:
\begin{algorithmSteps}
\item $\vec{s}_t$ is the incenter of $T_t$.
\item $\vec{s}_t^{kl}=(\vec{v}_t^k+\vec{v}_t^l)\postNorm$. This is the point on the edge $kl$ with equal (signed) distance to (the horoballs about) $\vec{v}_t^k$ and $\vec{v}_t^l$.
\item $r(S_t)=\max_{0\leq k<l\leq 3}\big\{d(\vec{s}_t,\vec{s}_t^{kl})\big\}$.
\end{algorithmSteps}
\end{algorithmSteps}
\caption{Spine radius for a complete triangulation. \label{algo:spine}}
\end{algorithm}

Using the choices for $\vec{v}_t^v$ from the previous section, Algorithm~\ref{algo:spine} computes the information about a spine needed to compute the length spectrum.

\begin{proposition}\label{prop:spine}
Let $P=\cup T_t$ be a developed fundamental polyhedron for a complete hyperbolic 3-manifold $M$ and $r(S_t)$ as in Algorithm~\ref{algo:spine}. There is a subset $S_t\subset T_t$ for each tetrahedron $T_t$ such that
\begin{itemize}
\item $S=\cup S_t$ forms a spine in $M$,
\item the incenter $\vec{s}_t$ of $T_t$ is in $S_t$ and
\item the radius of each $S_t$ about $\vec{s}_t$ matches the $r(S_t)$.
\end{itemize}
\end{proposition}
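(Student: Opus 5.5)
We construct $S_t$ explicitly as a geodesic cone. For each tetrahedron $T_t$, take the incenter $\vec{s}_t$, the four points $\vec{s}_t^{f}$ where the insphere touches the faces, and the six edge points $\vec{s}_t^{kl}$ from Algorithm~\ref{algo:spine}. For a face $f$ with vertices $k,l,m$, the points $\vec{s}_t^{kl},\vec{s}_t^{lm},\vec{s}_t^{km}$ together with the face point $\vec{s}_t^{f}$ span a geodesic ``tripod'' in $f$: the three geodesic segments from $\vec{s}_t^{f}$ to the edge points. We let $S_t$ be the union, over the six edges $kl$, of the two geodesic triangles $\vec{s}_t\,\vec{s}_t^{f}\,\vec{s}_t^{kl}$ and $\vec{s}_t\,\vec{s}_t^{f'}\,\vec{s}_t^{kl}$, where $f,f'$ are the faces adjacent to $kl$. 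Topologically this is the cone from $\vec{s}_t$ over the dual graph in $\partial T_t$, which is exactly a butterfly. It contains $\vec{s}_t$ and separates the four vertices (or cusps) of $T_t$.

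\textbf{Step 1: matching.} We must show the pieces match under the face-pairings $g^f_t$. The face point $\vec{s}_t^{f}$ is not intrinsic to the face, since it depends on the insphere of $T_t$. So instead of the incircle tangency point, we take for $\vec{s}_t^{f}$ an intrinsic point of the face triangle, namely the normalized sum $(\vec{v}^k+\vec{v}^l+\vec{v}^m)\postNorm$. The edge points $\vec{s}_t^{kl}=(\vec{v}_t^k+\vec{v}_t^l)\postNorm$ are likewise defined purely from the vertex vectors. These vectors match under $g^f_t$ (Section~\ref{sec:spineVertexVectors}, with equal-area cusp choices), so both the face points and the edge points match, and hence the tripods in paired faces coincide. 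Since all pieces are geodesic cones or segments between matching points, $S=\cup S_t$ is a well-defined embedded dual 2-skeleton, i.e.\ a spine. Embeddedness within $T_t$ needs a check: the cone from an interior point over a tree lying in $\partial T_t$ is embedded because $T_t$ is convex.

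\textbf{Step 2: radius.} The distance from $\vec{s}_t$ is a convex function on $\H^3$. Each triangle of $S_t$ is the convex hull of its three vertices, so the maximum distance over the triangle is attained at a vertex. The candidate maximizers are therefore $\vec{s}_t$ (distance $0$), the face points, and the edge points. The remaining claim is that each face point is no farther from $\vec{s}_t$ than some edge point. A face point lies in the geodesic triangle $\vec{s}_t^{kl}\vec{s}_t^{lm}\vec{s}_t^{km}$, since it is a positive combination of the vertex vectors and hence also a positive combination of the three edge points. By convexity again, its distance is at most the maximum over the edge points. Thus the radius of $S_t$ equals $\max_{k<l} d(\vec{s}_t,\vec{s}_t^{kl})=r(S_t)$.

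The main obstacle is Step 1, specifically choosing intrinsic face points so that gluing is consistent. A second difficulty is verifying that the cone over the tripods is a genuine (embedded, transverse) spine in the ideal case, where the face triangles are ideal. There we rely on the fact that all chosen points are finite points in the interior of the faces or edges, and that $M\setminus S$ deformation retracts onto the vertex links.
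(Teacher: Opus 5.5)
Your proposal is correct and is essentially the paper's proof: you use the same twelve geodesic triangles coning $\vec{s}_t$ over tripods in the faces, get the matching from equivariance of the edge and face points under the face-pairings, and bound the radius by convexity of the distance to $\vec{s}_t$ (the paper phrases this as the three triangles of face $f$ lying in the tetrahedron spanned by $\vec{s}_t$ and the three $\vec{s}_t^{kl}$). The only difference is the face point. The paper takes the incenter of the triangle spanned by the three edge points $\vec{s}_t^{kl}$, while you take the normalized sum of the three vertex vectors; your choice is equally intrinsic to the face and, as you show, also lies in that triangle, which is all the radius argument needs.
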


\begin{proof}
Let $\vec{s}_t^f\in\H^3$ be the incenter of the triangle spanned by the $\vec{s}_t^{kl}$ with $k,l\not=f$ computed in Algorithm~\ref{algo:spine}. Let $S_t\subset T_t$ be the union of the 12 triangles spanned by $\vec{s}_t$, $\vec{s}_t^f$ and $\vec{s}_t^{kl}$ with $k,l\not=f$ which are in $T_t$, on face $f$ of $T_t$ and on the adjacent edge $kl$, respectively; see Figure~\ref{fig:spine}. The vertices $\vec{v}^v_t$ are matching under the face-pairing matrices $g_t^f$. The points $\vec{s}_t^{kl}$ and $\vec{s}_t^f$ for edges and faces are computed from the adjacent vertices $\vec{v}^v_t$ in an $\SO(1,3)$-equivariant way. Thus, the boundaries of each $S_t\subset T_t$ are matching under the face-pairing matrices $g_t^f$.
\begin{figure}[h]
\begin{center}
\begingroup%
  \makeatletter%
  \providecommand\color[2][]{%
    \errmessage{(Inkscape) Color is used for the text in Inkscape, but the package 'color.sty' is not loaded}%
    \renewcommand\color[2][]{}%
  }%
  \providecommand\transparent[1]{%
    \errmessage{(Inkscape) Transparency is used (non-zero) for the text in Inkscape, but the package 'transparent.sty' is not loaded}%
    \renewcommand\transparent[1]{}%
  }%
  \providecommand\rotatebox[2]{#2}%
  \newcommand*\fsize{\dimexpr\f@size pt\relax}%
  \newcommand*\lineheight[1]{\fontsize{\fsize}{#1\fsize}\selectfont}%
  \ifx\svgwidth\undefined%
    \setlength{\unitlength}{274.43348081bp}%
    \ifx\svgscale\undefined%
      \relax%
    \else%
      \setlength{\unitlength}{\unitlength * \real{\svgscale}}%
    \fi%
  \else%
    \setlength{\unitlength}{\svgwidth}%
  \fi%
  \global\let\svgwidth\undefined%
  \global\let\svgscale\undefined%
  \makeatother%
  \begin{picture}(1,0.47556453)%
    \lineheight{1}%
    \setlength\tabcolsep{0pt}%
    \put(0,0){\includegraphics[width=\unitlength,page=1]{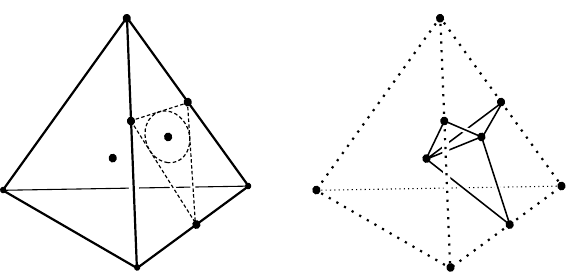}}%
    \put(0.21100023,0.46205357){\makebox(0,0)[lt]{\lineheight{1.25}\smash{\begin{tabular}[t]{l}$\vec{v}_t^k$\end{tabular}}}}%
    \put(0.45031527,0.14002873){\makebox(0,0)[lt]{\lineheight{1.25}\smash{\begin{tabular}[t]{l}$\vec{v}_t^l$\end{tabular}}}}%
    \put(0.88315259,0.30570365){\makebox(0,0)[lt]{\lineheight{1.25}\smash{\begin{tabular}[t]{l}$\vec{s}_t^{kl}$\end{tabular}}}}%
    \put(0.70987752,0.17475676){\makebox(0,0)[lt]{\lineheight{1.25}\smash{\begin{tabular}[t]{l}$\vec{s}_t$\end{tabular}}}}%
    \put(0.85867233,0.21357402){\makebox(0,0)[lt]{\lineheight{1.25}\smash{\begin{tabular}[t]{l}$\vec{s}_t^f$\end{tabular}}}}%
    \put(0.33523392,0.30263753){\makebox(0,0)[lt]{\lineheight{1.25}\smash{\begin{tabular}[t]{l}$\vec{s}_t^{kl}$\end{tabular}}}}%
    \put(0.16195886,0.17169064){\makebox(0,0)[lt]{\lineheight{1.25}\smash{\begin{tabular}[t]{l}$\vec{s}_t$\end{tabular}}}}%
  \end{picture}%
\endgroup%

\end{center}
\caption{Three of the 12 triangles of the spine $S_t$ in the tetrahedron $T_t$.\label{fig:spine}}
\end{figure}
\newpage
Consider only those 3 of the 12 triangles belonging to face $f$. They are contained in the tetrahedron $Q$ spanned by $\vec{s}_t$ and $\vec{s}_t^{kl}$ with $k,l\not=f$. The maximal distance of a point in $Q$ to $\vec{s}_t$ is achieved by a vertex of $Q$, that is $\vec{s}_t^{kl}$ with $k,l\ne f$.
\end{proof}

\section{Pre-length spectrum stream} \label{sec:prelengthSpectrumStream}

In this section, we introduce Algorithm~\ref{algo:preLen} to compute a pre-length spectrum stream $(\mu_0, m_0), (\mu_1, m_1), \dots$ as defined in Section~\ref{sec:defPrelength}.

\subsection{Algorithm} \label{sec:preLengthAlgo}

Since we invoke the tiling algorithm \cite[Algorithm~7.3.1]{goerner:tiling}, we remind the reader of its interface while referring to \cite{goerner:tiling} for further details. Recall that the tiling algorithm can take different types of objects as input. In this section, however, we only need the case where the input is a basepoint $K$ in a tetrahedron $T_t\subset\H^3$. Given this input, the output of tiling algorithm is a stream $(r_0, m_0T_{t_0}), (r_1, m_1T_{t_1}), \dots$ where $m_iT_{t_i}$ are distinct lifted tetrahedra in $\H^3$ and each $r_i>0$ is a ``tiling radius''. That is, $r_i$ (or its left endpoint $\underline{r}_i$ when using intervals) is a lower bound for the radius of the closed ball about $K$ that is covered by the first $i$ lifted tetrahedra $m_0T_{t_0}, \dots, m_{i-1}T_{t_{i-1}}$ in the stream.
\begin{algorithm}[h]
\begin{tabular}{rp{15.4cm}}
{\bf Input:} & Developed fundamental polyhedron $P=\cup T_t\subset\H^3$ for hyperbolic manifold $M$.\\
& For a spine $S\subset P$: For each tetrahedron $T_t$, a basepoint $\vec{s}_t\in S_t$ and the radius $r(S_t)\in\R^+$ (about $\vec{s}_t$) of the restriction $S_t=S\cap T_t$ of $S$ to $T_t$.\\
{\bf Output:} & A pre-length spectrum stream $(\mu_0,m_0),(\mu_1,m_1),\dots$ for $S$.\\
{\bf Algorithm:}\\
\end{tabular}
\begin{algorithmSteps}
\item For each tetrahedron $T_t\subset P$, we compute the following pre-length spectrum stream $(\mu_0,m_0),(\mu_1,m_1),\dots$ for $S_t$: \label{step:perTetrahedronPreLen}
\begin{algorithmSteps}
\item Iterate $(r_i,m_iT_{t_i})$ over the output stream of the tiling algorithm about $K=\vec{s}_t$ with seed $\Id T_t (=T_t)$; see \cite[Algorithm~7.3.1]{goerner:tiling}:
\begin{algorithmSteps}
\item If $m_i\ne\Id$ and ${t_i}=t$:
\begin{algorithmSteps}
\item Emit $(\mu_i, m_i)$ where
\[
\mu_i=\cosh^{-1}\left(\max\Big(1, \frac{\cosh r_i }{\cosh r(S_t)}\Big)\right).
\]
\end{algorithmSteps}
\end{algorithmSteps}
\end{algorithmSteps}
\item Interleave the above streams by picking from the stream whose next element has the lowest $\underline{\mu_j}$. If there is a tie between multiple streams, pick from the first (or any) of the tied streams.
\end{algorithmSteps}
\caption{Pre-length spectrum stream. \label{algo:preLen}}
\end{algorithm}

\begin{proposition} \label{prop:perTetPreLen}
Consider the stream produced in Step~\ref{step:perTetrahedronPreLen} of Algorithm~\ref{algo:preLen} for tetrahedron $T_t$. This stream is a pre-length spectrum stream in the sense of Section~\ref{sec:defPrelength} when replacing $S$ by $S_t$. 
\end{proposition}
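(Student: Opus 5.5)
We need to show: for each emitted index $i$ in the per-tetrahedron stream, every primitive unoriented closed geodesic $\gamma$ in $M$ of real length $\le \mu_i$ that intersects $S_t$ has a representative $m_j$, with $j<i$ in the emitted stream, fixing a line that meets $S_t\subset P$. The plan is to combine Proposition~\ref{prop:spineAndTilingRadius} with the covering guarantee of the tiling algorithm.

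\textbf{Step 1: locate a representative.} Fix such a $\gamma$ and let $\mu=\RePart(\lambda)\le\mu_i$. By the proof of Proposition~\ref{prop:spineAndTilingRadius}, there are a lift with $s\in\gamma\cap S_t$ and a primitive $m\in\Gamma$ fixing this lift. This $m$ fixes a line through $s\in S_t$ and satisfies $d(\vec{s}_t,ms)\le \cosh^{-1}(\cosh r(S_t)\cosh\mu)$. Note $m\neq \Id$. The point $ms$ lies in $mS_t\subset mT_t$.

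\textbf{Step 2: compare with the tiling radius.} We claim $\cosh^{-1}(\cosh r(S_t)\cosh\mu) < r_i$ whenever $\mu<\mu_i$, and $\le r_i$ when $\mu\le\mu_i$.
\begin{itemize}
\item If $\cosh r_i\ge\cosh r(S_t)$, then $\cosh\mu_i=\cosh r_i/\cosh r(S_t)$. This gives $\cosh r(S_t)\cosh\mu\le\cosh r_i$ by monotonicity of $\cosh$ on $[0,\infty)$.
\item Otherwise $\mu_i=0$, so $\mu=0$, which is impossible for a closed geodesic. The statement is then vacuous.
\end{itemize}
Hence $ms$ lies in the closed ball of radius $r_i$ about $K=\vec{s}_t$. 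By the tiling guarantee, this ball is covered by $m_0T_{t_0},\dots,m_{i-1}T_{t_{i-1}}$. So $ms$ lies in some $m_jT_{t_j}$ with $j<i$.

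\textbf{Step 3: identify the tile.} The main obstacle is passing from "$ms\in m_jT_{t_j}$" to "$m_jT_{t_j}=mT_t$". The point $ms$ may lie on a face or edge of several tiles, since $S_t$ reaches $\partial T_t$ at the points $\vec{s}_t^{kl}$ and $\vec{s}_t^f$. In that case the covering tile could be a neighbouring $m'T_{t'}$ with $t'\ne t$.

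I would handle this by exploiting freedom in the choice of $s$. The set $\gamma\cap S$ is a union of segments, and generically $\gamma$ meets $S$ transversally in the interior of a butterfly triangle. If $\gamma$ touches only the boundary, perturb $s$ along $\gamma$ within $\overline{S_t}$, or choose $s$ in the interior of $T_t$ using the 12-triangle construction of Proposition~\ref{prop:spine}. If that fails, argue that $\gamma$ then meets an adjacent $S_{t'}$ interior, so it is counted there. Within a single tetrahedron stream, however, I would rather use closedness. The set of points $ms$ with $s\in\gamma\cap S_t$ and distance bound $\le r_i$ is contained in the closed ball. The lifted tetrahedra tile $\H^3$ with disjoint interiors, and finitely many tiles cover the ball, each closed. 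A limiting argument from interior points of $mT_t$ near $ms$ (also inside the ball after a slight shrink, using strict inequality or the closedness of the covered region) then shows $mT_t$ itself is among the first $i$ tiles. This handles the boundary-case subtlety.

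\textbf{Step 4: conclude.} Distinct lifted tetrahedra correspond to distinct $(m_j,t_j)$, so $m_j=m$ and $t_j=t$ with $m\neq\Id$. By the emission rule, $(\mu_j,m)$ was therefore emitted before $(\mu_i,m_i)$, and $m$ fixes the line $\gamma$ meeting $S_t$. This gives the required representative.
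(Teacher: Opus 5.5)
Your proposal is correct and follows the paper's proof. Proposition~\ref{prop:spineAndTilingRadius} gives a representative $m$ fixing a line through a point $s\in S_t$ with $d(\vec{s}_t, mT_t)\le d(\vec{s}_t, ms)\le\cosh^{-1}\left(\cosh r(S_t)\cosh\RePart(\lambda)\right)\le r_i$. The covering of $\overline{B}_{r_i}(\vec{s}_t)$ by the first $i$ tiles then places $mT_t$ among them, so $m$ was emitted earlier.

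Your Step~3 spells out the interior-point argument that the paper leaves implicit, but it only works under the strict inequality $\RePart(\lambda)<\mu_i$, which is exactly what the paper's proof assumes. Closedness of the covered region alone cannot rule out $mT_t$ meeting $\overline{B}_{r_i}(\vec{s}_t)$ only in a boundary point while not being one of the first $i$ tiles.
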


\begin{proof}
Let $\mu_i$ be as computed from $r_i$ in the algorithm. Let $\gamma$ be a closed geodesic intersecting $S_t\subset M$ of real length less than $\mu_i$. By Proposition~\ref{prop:spineAndTilingRadius}, $\gamma$ has a representative $m\in\Gamma$ with $d(\vec{s}_t, mS_t) \leq r_i$. Since $d(\vec{s}_t, mT_t) \leq d(\vec{s}_t, mS_t)$, we have $d(\vec{s}_t, mT_t)\leq r_i$. Since the $m_0T_{t_0}, \dots, m_{i-1}T_{t_i-1}$ from the tiling algorithm cover $\overline{B}_{r_i}(K)$, they include $mT_t$. Thus, $m$ has already been emitted to the output stream earlier.
\end{proof}

\begin{proposition}
The output $(\mu_0, m_0), (\mu_{1},m_{1}), \dots$ of Algorithm~\ref{algo:preLen} is a pre-length spectrum stream for $S$.
\end{proposition}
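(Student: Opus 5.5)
The plan is to reduce the statement to Proposition~\ref{prop:perTetPreLen} together with two properties of the interleaving in Step~2 of Algorithm~\ref{algo:preLen}. First, each per-tetrahedron stream has non-decreasing $\mu$-values. Second, the merged stream only ever advances the stream with the currently smallest next value. Fix an index $i$ of the output stream, with value $\mu_i$ coming from the stream for some tetrahedron $T_{t'}$. Let $\gamma$ be a primitive, unoriented closed geodesic of real length $\leq\mu_i$ that intersects $S$. Then $\gamma$ intersects $S_t$ for some $t$. We must show that a representative $m$ of $\gamma$ fixing a line that meets $S$ already appears among the first $i$ outputs.

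\textbf{Monotonicity.} In each per-tetrahedron stream the radii $r_j$ produced by the tiling algorithm are non-decreasing. This holds because the covered ball only grows, and if the tiling algorithm's radii are not literally monotone we can replace them by running maxima without breaking the covering guarantee. The map $r\mapsto\cosh^{-1}(\max(1,\cosh r/\cosh r(S_t)))$ is non-decreasing, so the emitted $\mu$'s (or their left endpoints) are non-decreasing too.

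\textbf{Merge invariant.} By the greedy rule, when the element $(\mu_i,m_i)$ is picked from stream $t'$, every other stream $t$ has its next unemitted element $\mu^{(t)}$ with $\underline{\mu^{(t)}}\geq\underline{\mu_i}$. For stream $t$ this means its consumed prefix consists of all elements strictly before $\mu^{(t)}$.

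\textbf{Applying Proposition~\ref{prop:perTetPreLen}.} Apply it to stream $t$ at the position of $\mu^{(t)}$; if stream $t$ is $t'$ itself, use position $\mu_i$. The geodesic $\gamma$ intersects $S_t$ and has real length $\leq\mu_i\leq\mu^{(t)}$, so a representative $m$ of $\gamma$ is among the elements of stream $t$ strictly before that position. All of those have already been emitted into the merged stream before index $i$. The representative $m$ comes from the proof of Proposition~\ref{prop:spineAndTilingRadius}: it fixes the lift of $\gamma$ through a point $s\in S_t\subset P$, so its axis meets $S$. This is the required property.

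\textbf{Main obstacle.} The delicate step is the interval-arithmetic form of the comparison $\mu_i\leq\mu^{(t)}$, which needs care at ties and with endpoints. I would phrase the pre-length guarantee as ``real length $\leq\underline{\mu_i}$'' and use Proposition~\ref{prop:perTetPreLen} with left endpoints, so that $\underline{\mu_i}\leq\underline{\mu^{(t)}}$ suffices. A further point is that a stream may be finite or even empty. In that case the tiling about $\vec{s}_t$ still yields lifts $m T_t$ covering any ball, so emitted elements exist at arbitrarily large radius whenever $\gamma$ exists; otherwise such a stream is vacuous for the guarantee.
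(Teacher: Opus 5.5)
Your argument is essentially the paper's: Proposition~\ref{prop:perTetPreLen} for each $S_t$, together with $S=\cup S_t$ and the interleaving invariant that when $(\mu_i,m_i)$ is emitted, every input stream has been consumed up to an element with $\underline{\mu^t_j}\geq\underline{\mu_i}$. You should drop the monotonicity paragraph, though: the paper explicitly does not guarantee non-decreasing $\underline{\mu}$'s (see the remark following the proposition), and replacing radii by running maxima would change the algorithm. Your argument never uses monotonicity anyway, since applying Proposition~\ref{prop:perTetPreLen} at the current head of each stream relies only on the greedy selection rule.
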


\begin{proof}
This follows from Proposition~\ref{prop:perTetPreLen}, $S=\cup S_t$ and the following property of the interleaving process: Let $(\mu^t_0, m^t_0), (\mu^t_1,m^t_1), \dots$ be the input streams and $(\mu_0, m_0),$ $(\mu_1, m_1), \dots$ be the output stream. By the time we encounter $(\mu_i, m_i)$ in the output, we have taken from each input at least all elements up to the first one $(\mu^t_j, m^t_j)$ with $\underline{\mu^t_j}\geq \underline{\mu_i}$.
\end{proof}

\begin{remark}
We expect the $\mu_i$ to be non-decreasing. However, there might be fluctuations in the lengths of the computed intervals for $\mu_i$. Thus, we do not guarantee that the $\underline{\mu_i}$ in the output of Algorithm~\ref{algo:preLen} are non-decreasing as the underlying tiling algorithm did not make the respective guarantee to begin with; see \cite[Section~7.4]{goerner:tiling}. At each point, we still have that $\underline{\mu_i}$ is a lower bound on the length of geodesics found so far. It might just be a lower bound worse than one computed earlier. We could enforce the non-decreasing property by taking the maximum. However, we choose not to since this is not needed for most applications.
\end{remark}


\section{Filter and sort the pre-length spectrum} \label{sec:filtAndSort}

This section introduces the algorithm to produce the length spectrum stream from the pre-length spectrum stream when the triangulation $\myTrig$ has a complete geometric structure.

As a warm-up case, we would compute the length spectrum up to a given cut-off length $\mu$ as follows: Find the pre-length spectrum up to cut-off length $\mu$. That is, find the first $\mu_i\geq\mu$ in the pre-length spectrum stream and take all $m_j$ with $j<i$. Compute the complex length $\lambda_j$ for each such $m_j$. Sort the pairs $(\lambda_j, m_j)$ by $\underline{\RePart(\lambda_j)}$. We can do this by, for example, first adding them all to a priority queue $Q$ and then popping them off one by one.

We need to reject pairs $(\lambda_j, m_j)$ where $m_j$ is parabolic. Note that our implementation does not actually determine whether $m_j$ is parabolic. Instead, it rejects any pair $(\lambda_j,m_j)$ where $m_j$ is either parabolic or fixes a line that does not intersect the spine $S$. We can reject the latter since there is another representative $(\lambda_{j'}, m_{j'})$ of the same geodesic ($m_{j'}$ conjugate to $m_j$) in the pre-length spectrum intersecting the spine $S$.
For details, we refer the reader to the discussion of Steps~\ref{algStep:fixedPts} and~\ref{algStep:rejectFarSpine} in Section~\ref{sec:innerLoopChecks}.

We also need to reject pairs $(\lambda_j, m_j)$ where $m_j$ duplicates a previous geodesic or is a non-primitive. We can detect this by checking whether $m_j$ is a conjugate of a power $m_{j'}^k$ with $k\ne 0$ of a previous $m_{j'}$ with $j'<j$. Performing this check requires finding a bound on $k$ and finding a finite subset $\Gamma'\subset\Gamma$ that $m_j$ and $m_{j'}^k$ are related through conjugation by an element in $\Gamma'$ if and only if they are related by an element in $\Gamma$. Such a bound for $k$ is given by $\RePart(\lambda_j)/\RePart(\lambda_{j'})$. Such a subset $\Gamma'$ can be computed by applying the tiling algorithm to the lines fixed by $m_j$ and $m_{j'}$. We refer the reader to Sections~\ref{sec:matrixPowerEquality} and~\ref{sec:setOfGeos} for details.

Note that we also might have geodesics $(\lambda_j, m_j)$ that are longer than the cut-off length, that is $\RePart{\lambda_j} > \mu$. We also need to reject such a $(\lambda_j,m_j)$ for the following reasons: we give the wrong impression that we also found all geodesics with real length between $\mu$ and $\RePart{\lambda_j}$; $m_j$ can be a multiple of a geodesic but we are not detecting this since we have not seen the corresponding primitive $m_{j'}$.

\subsection{Algorithm}

Algorithm~\ref{algo:lenSpec} follows this idea but as an online algorithm with input and output being a stream.
The conversion to an online algorithm requires us to answer: Is it safe to emit a geodesic to the output stream or do we need to consume more elements from the input stream? As we have pointed out above, it is only safe to emit a geodesic if we can ensure that we have seen a representative for all geodesic shorter than the geodesic we are about to emit. This translates into $\RePart(\lambda) < \mu_i$ where $\lambda$ is the complex length of the next geodesic we consider emitting and $\mu_i$ is the latest element we took from the pre-length spectrum stream.

\newsavebox{\innerLoopBox}
\savebox{\innerLoopBox}{%
\begin{minipage}{16.3cm}
\begin{algorithmSteps}
\item Remove $(\lambda, m, c)$ from $Q$.
\item $\big[$ If $c=\top$, skip to Step~\ref{algStep:ensureProgress}. $\big]$
\item If $\RePart(\lambda) + \varepsilon < \RePart(\lambda_\text{prev})$: Continue to next iteration (an optimization).\label{algStep:perf}
\item Let $x_0, x_1\in\posLightCone$ be the (lifted) fixed points of (possibly parabolic) $m$ (see Section~\ref{sec:compFixedPoints}). \label{algStep:fixedPts}
\item If $d(\vec{s}_t,x_0x_1)>r(S_t)$ for every tetrahedron $T_t$: Continue to next iteration (rejects $m$ fixing a line $x_0x_1$ not intersecting the spine, including parabolic $m$). \label{algStep:rejectFarSpine}
\item If $\overline{\RePart(\lambda)} - \underline{\RePart(\lambda)} > \underline{\RePart(\lambda)} / 4$: Fail (if using intervals).\label{algStep:rejectAmbi}
\item $\big[$ If $m$ corresponds to (a multiple of) a core curve (see Section~\ref{sec:coreCurveDetection}): Continue to next iteration. $\big]$ \label{algStep:rejectCoreCurve}
\item If $m$ is a multiple (including $m$ and $m^{-1}$) of (a conjugate of) an element in $\setOfGeodesics$: Continue to next iteration. \label{algStep:rejectDuplicate}
\item Add $m$ to $\setOfGeodesics$.
\item If not $\underline{\RePart(\lambda)}\geq \underline{\RePart(\lambda_\text{prev})}$: Fail.
\label{algStep:ensureProgress}
\item Emit $(\lambda, m)$.
\item $\lambda_\text{prev}\leftarrow \lambda$.
\end{algorithmSteps}
\end{minipage}
}

\begin{algorithm}[ht]
\begin{center}
\begin{tabular}{rp{15.4cm}}
{\bf Input:} & Developed fundamental polyhedron $P=\cup T_t$; $\vec{s}_t$ and $r(S_t)\in\R^+$ for spine $S\subset P$.\\
& Pre-length spectrum stream $(\mu_0, m_0), (\mu_1, m_1),\dots$ for $S$. \\
{\bf Output:} & Length spectrum stream (see Section~\ref{sec:putAlgo}).\\
{\bf Note:} & $\varepsilon = 0$ if using intervals.\\
& Ignore [ \dots ] for complete geometric triangulations.\\
{\bf Algorithm:}\\
\end{tabular}
\begin{algorithmSteps}
\item[Q] Priority queue of $(\lambda, m, c)$ where $m\in\Gamma$ and $\lambda$ is the complex translation length of $m$ [and $c$ a boolean to flag a known core curve]. Use the $\underline{\RePart(\lambda)}$ as key.
\item[\setOfGeodesics] Set of unoriented geodesics represented by $m\in\Gamma$ (see Section~\ref{sec:setOfGeos} [and Section~\ref{sec:coreCurveAvoidance}]).
\item $\big[$ Add core curves $(\lambda, m, \top)$ to $Q$ (see Section~\ref{sec:addCoreCurves}). $\big]$ \label{step:addCoreCurves}
\item $\lambda_\text{prev}\leftarrow 0$
\item Iterate $(\mu_i, m_i)$ over the pre-length spectrum stream:
\begin{algorithmSteps}
\item While $Q$ is non-empty and $\RePart(\lambda) + \varepsilon < \mu_i$ for the head $(\lambda, m, c)$ of $Q$:
\vspace{-0.7cm}
\begin{adjustwidth}{-0.6cm}{}
\[
\begin{minipage}{0.5cm}
\rotatebox{90}{Inner~~loop}
\end{minipage}
\left\{\hspace{-0.3cm}\usebox{\innerLoopBox}\right.
\]
\end{adjustwidth}
\vspace{-0.2cm}
\item $\lambda\leftarrow$ (possibly zero) complex translation length of $m_i$ (see Section~\ref{sec:computeLen}).
\item Add $(\lambda, m_i, \bot)$ to $Q$.
\end{algorithmSteps}
\end{algorithmSteps}
\vspace{-0.4cm}
\end{center}
\caption{Length spectrum. \label{algo:lenSpec}}
\end{algorithm}

Recall that a ``Fail'' in a verified computation and, in particular, in Algorithm~\ref{algo:lenSpec} indicates that we need to compute better intervals for the geometric structure and rerun all subsequent computations with higher precision.

\myComment{

 First, we embellish each $(\mu_i,m_i)$ by computing a quadruple $(\mu_i, \lambda_i, m_i, c)$, where $\mu_i$ and $m_i$ comes from the pre-length spectrum $\lambda_i$ is the complex length of $m_i$ and $c$ is a boolean flag used for marking a core curve. (This flag will be used for geometric spun-triangulations.)  

\todo{up to here}
It uses a priority queue $Q$ (such as a binary heap already used in \cite[Section~7.3]{goerner:tiling}) to sort the pre-length spectrum.
Note that for each $\mu_i$, we have that any geodesic of length $\underline{\mu_i}$ has a representative that 1) previously appeared in the pre-length spectrum and 2) intersects the spine. We also note at this stage parabolic elements (which would have real length zero) could appear in the pre-length spectrum.

By comparing $\underline{\mu_i}$ against $\overline{\RePart(\lambda)}$ for the head $(\lambda, m)$ of $Q$, we know whether it is safe to emit the $(\lambda, m)$ as next (possible) geodesic or whether we need to take further elements from the pre-length spectrum stream.
}

\subsubsection{Inner loop checks} \label{sec:innerLoopChecks}

Here are more explanations of the checks performed inside the inner loop of Algorithm~\ref{algo:lenSpec}:
\begin{itemize}[itemindent=-0.6cm,leftmargin=0.9cm]
\item[] Step~\ref{algStep:perf}: This is a performance optimization. If $\RePart(\lambda)+\varepsilon<\lambda^{\RePart}_\text{prev}$, another representative of the geodesic $m$ has already been emitted earlier. It has also already been added to $\setOfGeodesics$. Thus, if we did not perform this quick check here, $m$ would be rejected in Step~\ref{algStep:rejectDuplicate} (if not earlier in an earlier step).
\item[] Step~\ref{algStep:fixedPts}: We compute the fixed points in a stable way such as described in Section~\ref{sec:compFixedPoints}. That is, we need to address the case where $m$ is parabolic and the two ideal fixed points coincide. From an interval perspective, we cannot distinguish this case from the case where $m$ is close to a parabolic and the two ideal fixed points are close to each other. In both cases, we need to give tight intervals for the two (possibly coinciding) fixed points. These intervals might overlap and necessarily have to if $m$ is parabolic. This is relevant for the next step.
\item[] Step~\ref{algStep:rejectFarSpine}: Recall that we can reject these $m$ because the pre-length spectrum stream has another representative intersecting the spine $S$.\\
We compute $d(\vec{s}_t, x_0x_1)$ in a stable way such as in \cite[Section~3.4]{goerner:tiling}. That is, we need to address the case when the intervals for $x_0$ and $x_1$ overlap. In this case, the left-endpoint of the interval for $d(\vec{s}_t, x_0x_1)$ needs to be positive (and large). The right-endpoint is necessarily $\infty$. The stability property makes this step reject parabolic $m$ (at least if the precision is high enough). Thus, we avoid later problems in Step~\ref{algStep:rejectDuplicate} as the set $\setOfGeodesics$ can only handle loxodromics.\\
As a performance optimization, we can check $d(p,x_0x_1)>R$ before doing the per-tetrahedron checks. Here, $R=\max_{t'} \big\{d(\vec{s}_t,\vec{s}_{t'})+ r(S_t)\big\}$ is a pre-computed upper bound for the radius of the spine $S\subset P$ about $p=\vec{s}_t$ for some fixed $t$.
\item[] Step~\ref{algStep:rejectAmbi}: This check is necessary since we reject non-primitive $m$ by checking against the primitives in $\setOfGeodesics$ in Step~\ref{algStep:rejectDuplicate}. This requires that $(\lambda', m',c)$ is not overtaken by a conjugate power $(\lambda, m,c)=(k\lambda', gm'^kg^{-1},c)$ with $k\geq 2$ in the priority queue $Q$. That is, we cannot allow the interval bounds to be so bad that $\underline{\RePart(\lambda)}\leq \underline{\RePart(\lambda')}$.
\item[] Step~\ref{algStep:ensureProgress}: Ensures non-decreasing $\underline{\RePart(\lambda_i)}$ (one of the guarantees listed in Section~\ref{sec:putAlgo}).
\end{itemize}

\section{Utilities for matrices} \label{sec:geoUtilities}

As some computations are easier in $\myPSL(2,\C)$, we convert between $\SO(1,3)$ and $\myPSL(2,\C)$ as necessary. Accordingly, we also convert between the hyperboloid model $\H^3$ and the upper halfspace model $\{z+t\jmath : t>0\}$.

When using $\myPSL(2,\C)$, we assume that we are given intervals for a matrix in $\mySL(2,\C)$ representing $m\in\myPSL(2,\C)$ for verified computations.


\subsection{Translation length} \label{sec:computeLen}

For $m\in \myPSL(2,\C)$, $\tr(m)$ is only defined up to sign and the complex translation length is given by
\[
\lambda=\pm 2 \cosh^{-1} \left(\frac{\pm \tr (m)}{2}\right).
\]
If $m$ is loxodromic, we pick $\lambda$ to have $\RePart(\lambda)>0$. Given intervals for $m$, we might not be able to make such a pick for $\lambda$ either because $m$ is not loxodromic or because $m$ is close to a non-loxodromic element. Whether or not we are in such a case, we always want $\overline{\RePart(\lambda)}$ to be an upper bound of the (possibly zero) real translation length. To achieve this, we evaluate $\lambda$ as follows:
\begin{algorithmSteps}
\item Among $+\tr(m)$ and $-\tr(m)$, pick the value $t$ with $\RePart(t)>0$ if possible. Pick any otherwise.\\
This is to avoid an idiosyncrasy of SageMath \cite{SageMath} (version 10.5). Its implementation of $\cosh^{-1}$ returns a large interval containing the values of both branches when crossing the branch cut. This is not consistent with the typical complex interval arithmetic implementation of multi-valued functions such as $\log$. The typical implementation picks a branch when crossing the branch cut to return a tight interval.
\item Let $\lambda'=2\cosh^{-1}(t/2)$. Among $+\lambda'$ and $-\lambda'$, pick the value $\lambda$ with $\RePart(\lambda)>0$ if possible. Otherwise, let $\lambda = +\lambda \cup -\lambda$ where $\cup$ denotes the smallest interval containing both complex intervals.
\end{algorithmSteps}

\subsection{Fixed points} \label{sec:compFixedPoints}

We assume that $m\in\myPSL(2,\C)$ is not an elliptic and, in particular, not the identity. We conjugate $m$ by a unit lower-triangular matrix $t_x$ to avoid a fixed point at $\infty$. More precisely, let
\[
m'=\left(\begin{array}{cc}a & b\\c & d\end{array}\right) = t^{-1}_x m t_x\quad\text{with}\quad t_x=\left(\begin{array}{cc}1 &0 \\ x & 1\end{array}\right)
\]
where we pick the $x$ among $-1,0,1$ with $c$ bounded away from $0$ the furthest.

In the upper half-space model, the two (possibly coinciding) fixed points of $m'$ are now given by $z=\left((a-d) \pm \sqrt{w}\right)/2c$ with $w = (a-d)^2+4bc.$

For verified computations, we evaluate $\sqrt{w}$ as follows: If $w\not=0$ (the interval does not contain zero), we simply use the typical complex interval implementation which picks a branch if $w$ crosses the branch cut. In this case, we have two provably distinct fixed points and $m$ is a loxodromic element. Otherwise, we use we use $[-u,u]+[-u,u] i$ where $u$ is an upper bound for $\sqrt{\RePart(w)^2+\ImPart(w)^2}.$ The latter case occurs when $m$ is parabolic but might also occur when $m$ is loxodromic and the two fixed points are far away from the origin $\jmath$ of the upper half-space model.

To obtain the (possibly coinciding) fixed points of $m$ in the hyperboloid model $\H^3$, we convert each fixed point of $m'$ to the Klein model, lift to a light-like vector in $\posLightCone$ and apply the $\SO(1,3)$-matrix corresponding to $t_x$.

If needed, we can determine which fixed point $x\in\H^3$ is attractive by converting $m$ to an $\SO(1,3)$-matrix and checking for each of the two fixed points whether the action of $m$ increases the time component, that is $(mx)_0>x_0$.

\subsection{Deciding whether two matrices $m, m'\in\Gamma$ are the same} \label{sec:matrixEquality}

We now discuss how to implement the predicate $\Eq_\Gamma(m, m')$ which is true when $m$ and $m'$ are (interval estimates) for the same matrix in $\Gamma\subset\SO(1,3)$. We proceed as in \cite[Section~9.3]{goerner:tiling} and compose the predicate $\Eq_b$ described in \cite[Section~9.2]{goerner:tiling} with the map $\Gamma\to\H^3, g\mapsto gp$. That is $\Eq_\Gamma(m, m')=\Eq_b(mp, m'p)$ with $b=\cosh r$ where $p$ and $r$ are the incenter and inradius, respectively, of a fixed tetrahedron $T_t\subset \H^3$ of the triangulation $\myTrig$.

\subsection{Deciding whether a matrix $m'\in\Gamma^\text{hyp}$ is a power of $m\in\Gamma^\text{hyp}$} \label{sec:matrixPowerEquality}

Let $\Gamma^\text{hyp}\subset\Gamma$ be the subset of hyperbolic elements in $\Gamma\subset\SO(1,3)$. Consider the predicate $\Eq^\N(m', m)$ for $m, m'\in \Gamma^\text{hyp}$ which is true if $m'$ is a positive power of $m$. We implement an interval version of $\Eq^\N(m', m)$ as follows:
\begin{algorithmSteps}
\item Let $\lambda$ and $\lambda'$ be the respective complex translation lengths of $m$ and $m'$ (see Section~\ref{sec:computeLen}).
\item If the interval for $\RePart(\lambda')/\RePart(\lambda)$ contains a unique natural number $k\geq 1$: Return $\Eq_\Gamma(m^kp, m'p)$ (with $\Eq_\Gamma$ from previous section).
\item If the interval for $\RePart(\lambda')/\RePart(\lambda)$ contains no natural number: Return false.
\item (Otherwise) Fail.
\end{algorithmSteps}
We let $\Eq^\Z(m',m)=\Eq^\N(m', m)\vee \Eq^\N(m', m^{-1})$ be the predicate that is true if $m'$ is a non-trivial power of $m$.

\section{Set of geodesics for a complete triangulation} \label{sec:setOfGeos}

Recall that Algorithm~\ref{algo:lenSpec} requires a set $\setOfGeodesics$ of closed geodesics in $M=\Gamma\backslash \H^3$ represented by $m\in\Gamma^\text{hyp}$ where $\Gamma^\text{hyp}\subset\Gamma$ is the subset of loxodromic elements. More precisely, $\setOfGeodesics$ is a set of elements in $\Gamma^\text{hyp}/\sim$ where $m\sim m'$ if $m$ and $m'$ are conjugate in $\Gamma$. A special requirement for $\setOfGeodesics$ is that we can look-up elements by power. That is, given $m'\in\Gamma^\text{hyp}$, determine whether $m$ is conjugate (in $\Gamma$) to a power $m^k$ where $m$ is in $\setOfGeodesics$ and $k\in\Z\setminus\{0\}$.

We use the notation from \cite[Section~9]{goerner:tiling}. We describe how to implement a dictionary $\left(\Gamma^\text{hyp}/\sim\right)\rightharpoonup Y$. The set $\setOfGeodesics$ can then be implemented by letting $Y$ be any one-element set.

\subsection{Dictionary $\Gamma^\text{hyp}\rightharpoonup Y$ with power look-up}

To implement $\Gamma^\text{hyp}\rightharpoonup Y$ we proceed similarly to \cite[Section~9.3]{goerner:tiling} and use the same predicate but a different hash function to enable look-ups by positive power. The hash function $h$ is as follows: Given $m\in\Gamma^\text{hyp}$, let $x$ be its attractive fixed point on the boundary $S^2$ of the Klein model. We set $\pi_\text{fix}(m)=x\cdot \mu$ where $\mu$ is some fixed $3$-vector. The dictionary $\Gamma^\text{hyp}\rightharpoonup Y$ is now implemented as hash dictionary $D_{\Eq_\Gamma, \pi_\text{fix}}$ as defined in \cite[Section~8.2]{goerner:tiling} where $\Eq_\Gamma$ is the predicate from Section~\ref{sec:matrixEquality}. 

To look-up $m'\in\Gamma^\text{hyp}$ by positive power, we first use the hash $\pi_\text{fix}(m')$ to look-up candidates $(I,(m,v))$ in the underlying $D_\R$. We then evaluate the predicate $\Eq^\N(m', m)$ from Section~\ref{sec:matrixPowerEquality} for each candidate and return $v$ on a match.

\subsection{Dictionary $\left(\Gamma^\text{hyp}/\sim\right)\rightharpoonup Y$} \label{sec:graphTraceSet}

We can use $\Gamma^\text{hyp}\rightharpoonup Y$ to implement $\left(\Gamma^\text{hyp}/\sim\right)\rightharpoonup Y$ as quotient dictionary as defined in \cite[Section~8.3]{goerner:tiling} by specifying a (multi-valued) choice function $c\from\Gamma^\text{hyp}\to\Gamma^\text{hyp}$. Given $m\in\Gamma^\text{hyp}$, $c(m)$ is (a superset of) the set of all conjugates of $m$ and $m^{-1}$ that fix a line intersecting the developed fundamental polyhedron $P$.

We compute $c(m)$ as follows. Let $K$ be the line fixed by $m$. Pick a generic point $x$ (also see \cite[Section~5.2]{goerner:tiling} on $K$. Use graph-tracing \cite[Section~6]{goerner:tiling} to find a lifted tetrahedron $m_0T_{t_0}$ 
or a pair of lifted tetrahedra $m_0T_{t_0}$ and $m_1T_{t_1}$ such that at least one contains $x$.

Using these lifted tetrahedra as seeds (see \cite[Sections~5.2]{goerner:tiling} for definition), we run the tiling algorithm in \cite[Algorithm~7.3.1]{goerner:tiling} to tile about $K$. Let $(r_0, m_0T_{t_0}), (r_1,m_1T_{t_1}), \dots$ be the resulting stream. Pick $i$ large enough such that $r_i>0$. The lifted tetrahedra $m_0T_{t_0}, m_1T_{t_1}, \dots, m_{i-1}T_{t_{i-1}}$ cover $K$ in $\Gamma_K\backslash\H^3$ where $\Gamma_K=\{m^j: j\in\Z\}$. In other words, they cover the geodesic $\gamma$ in $M$. Thus, all translates of $K$ intersecting the fundamental polyhedron $P$ (and potentially additional translates) are  given by the lifted objects $m_0^{-1}K,\dots,m_{i-1}^{-1}K$ (also called tetrahedra view in \cite[Section~7.2]{goerner:tiling}). Thus, $c(m)$ is given by
\[
m_0^{-1}m^{\pm 1}m_0,\dots,m_{i-1}^{-1}m^{\pm 1}m_{i-1}
\]
which might include duplicates.

\begin{remark}
This suggests that raytracing inside $P$ is an alternative method to computing $c(m)$. Note, however, that it is hard to resolve the ambiguities in a verified way when $\gamma$ intersects the $1$-skeleton.
\end{remark}

\begin{remark} \label{rem:infGraphTrace}
Assume that $M$ is incomplete and that $K$ corresponds to a core curve $\gamma$. Then, there is no lifted tetrahedron containing $x\in K$. This is because $K$ corresponds to the incompleteness locus (defined later in Section~\ref{sec:generalizationToSpun}). Thus, the heuristic step of graph-tracing (\cite[Algorithm~6.1.2]{goerner:tiling}) either runs into an infinite loop (if using exact arithmetic) or the verification step (\cite[Algorithm~6.1.3]{goerner:tiling}) fails. Hence, we have to guard against core curves when generalizing the length spectrum algorithm to spun-triangulations, see Sections~\ref{sec:introCoreCurvesInfiniteGraphTrace} and \ref{sec:coreCurveDetection}.
\end{remark}

\begin{remark} \label{rem:infTilingWhenInterCore}
Assume that $M$ is incomplete and that $K$ intersects a core curve $\gamma$ without coinciding with a core curve. Then the points of $K$ where it intersects $\gamma$ are not covered by any lifted tetrahedron. In fact, there are infinitely many lifted tetrahedra intersecting the line segment from $x$ to $mx$ which covers $\gamma$ once. Hence, we need to detect and deal with this case when generalizing the length spectrum algorithm to spun-triangulations, see Sections~\ref{sec:introGeoCrossCore} and \ref{sec:coreCurveAvoidance}.
\end{remark}

\section{Generalization to spun-triangulations} \label{sec:generalizationToSpun}

So far, we have only considered geometric triangulations $\myTrig$ forming complete hyperbolic structures. We now extend the result to geometric spun-triangulations. That is, we consider ideal triangulations $\myTrig$ with a hyperbolic structure resulting in a hyperbolic manifold $M$ that might be incomplete but can be completed by attaching circles. We call the completed manifold the \emphasisText{filled manifold} $M_\text{filled}$ and call the circles forming the \emphasisText{incompleteness locus} $M_\text{filled}\setminus M$ the \emphasisText{core curves}. For each core curve $\gamma$, we assume that the input to the length spectrum algorithm also includes a combinatorial description of a peripheral curve in $M$ homotopic to $\gamma$ in $M_\text{filled}$; see Section~\ref{sec:coreFromFilling} for details. Of course, we want to include the geodesics that intersect the incompleteness locus $M_\text{filled}\setminus M$ (including the core curves themselves) when computing the length spectrum.

The generalization to geometric spun-triangulations is both motivated by the desire to compute the length spectrum for closed manifolds as well as the ability to accelerate the computation for some cusped manifolds as shown in Example~\ref{ex:newAlgSpun}.

In this section, we list the problems with the length spectrum algorithm arising from the incompleteness locus. In the subsequent sections, we revisit the previous sections to address these problems. Note that the parts ``$[\dots]$'' in Algorithm~\ref{algo:lenSpec} are now relevant.

\subsection{Tiling about a point limited by incompleteness locus} \label{sec:introSwissCheese}

Recall that Algorithm~\ref{algo:preLen} computes the pre-length spectrum using the tiling algorithm \cite[Algorithm~7.3.1]{goerner:tiling}. As pointed out in \cite[Section~7.2]{goerner:tiling}, its maximal tiling radius is limited by the incompleteness locus.

We fix this by tiling a space $U\subset\H^3$ instead that avoids the incompleteness locus. To make the tiling algorithm work, $U$ needs to have the \emphasisDef{swiss cheese property}. Section~\ref{sec:generalizedCuspCross} defines this property and computes a suitable subspace $U\subset\H^3$ having this property.

Since we now tile $U$ rather than $\H^3$, we need to modify the spine so that it is contained in $U$ in Section~\ref{sec:spineIncomplete}. This gives us a new upper bound $\overline{r}(S_t)$ for the spine radius. Finally, Section~\ref{sec:tilingIncompleteAboutPoint} modifies the tiling algorithm to tile $U$.

\subsection{Core curves missing from the pre-length spectrum}

Consider a core curve $\gamma\subset M_\text{filled}\setminus M$ of complex length $\lambda$. Recall the definition of the pre-length spectrum stream $(\mu_0, m_0), (\mu_1, m_1), \dots$ in Section~\ref{sec:defPrelength}. Note that $\gamma$ does not intersect the spine $S\subset M$. Thus, we cannot assume that $\gamma$ has a representative (among $m_0, \dots, m_{i-1}$ where $i$ is such that $\mu_i\geq \RePart(\lambda)$).

We fix this by computing the core curves ahead of time in Section~\ref{sec:addCoreCurves} and explicitly adding them to length spectrum in Step~\ref{step:addCoreCurves} of Algorithm~\ref{algo:lenSpec}.

\subsection{Core curves cause infinite graph-trace} \label{sec:introCoreCurvesInfiniteGraphTrace}

Neither can we assume that a core curve $\gamma$ does not have a representative in the pre-length spectrum. In fact, since we use the tiling algorithm to compute the pre-length spectrum stream, every $m\in\Gamma\setminus\{\Id\}$ appears in it eventually. This includes the representatives of $\gamma$. We just do not know whether any such representative appears before or after the first $i$ with $\mu_i\geq \RePart(\lambda)$ since $\gamma$ does not intersect the spine.

Note that Step~\ref{algStep:rejectDuplicate} of Algorithm~\ref{algo:lenSpec} looks up $m\in\Gamma^\text{hyp}$ in the set $\setOfGeodesics$. Recall from Remark~\ref{rem:infGraphTrace} that this look-up causes an infinite or failed graph-trace when $m$ fixes a line $K$ corresponding to a core curve. That is, we cannot find a lifted tetrahedron containing a point $x$ on $K$ which is part of the incompleteness locus.

To fix this, we modify the graph-trace algorithm to also stop when $K$ coincides with a lift of a core curve. If this happens, we reject $m$ in Step~\ref{algStep:rejectCoreCurve} of Algorithm~\ref{algo:lenSpec}. We refer the reader to Section~\ref{sec:coreCurveDetection} for details.


\begin{remark}
We could also use that a core curve does not intersect the spine and strengthen Step~\ref{algStep:rejectFarSpine} instead. That is, reject $m$ whenever $x_0x_1\cap S_t=\varnothing$ for every $T_t$. However, this might be hard since $S_t$ can have complicated geometry.
\end{remark}

\subsection{Geodesics intersecting core curves cause infinite tiling} \label{sec:introGeoCrossCore}

Let $m\in\Gamma^\text{hyp}$. Let $K$ be the line fixed by $m$ and $x$ be a point on $K$. Consider the line segment from $x$ to $mx$ covering the corresponding closed geodesic in $M_\text{filled}$ once.

Note that $K$ can intersect one or more core curves without coinciding with any core curve. In that case, the line segment intersects infinitely many lifted tetrahedra. Recall from Remark~\ref{rem:infTilingWhenInterCore} that this is again a problem looking up $m$ in the set $\setOfGeodesics$ in Step~\ref{algStep:rejectDuplicate} of Algorithm~\ref{algo:lenSpec}.

To fix this, we remove disjoint and embedded tubes about all core curves from the line segment. The result is non-empty and only finitely many lifted tetrahedra are required to cover it. We refer the reader to Section~\ref{sec:coreCurveAvoidance} for details.



\section{Generalized cusp cross sections} \label{sec:generalizedCuspCross}

\subsection{Swiss cheese} \label{sec:swissCheese}

As already stated in Section~\ref{sec:introSwissCheese}, our goal is to modify \cite[Algorithm~7.3.1]{goerner:tiling} to tile a subspace $U\subset\H^3$. If $U$ is just any subspace, we might not find all tiles intersecting the ball $B_r(u)$ by just recursively tiling; see Figure~\ref{fig:failedSwissCheese}. To avoid this, we require the following property of $U$:
\begin{definition} \label{def:swissCheese}
Let $U$ be a subset of\/ $\H^3$\!. We say that $U$ has the \emphasisDef{swiss cheese property} if $B_r(x)\cap U$ is connected for every $r>0$ and $x\in U$.
\end{definition}

\begin{figure}[h]
\begin{center}
\begingroup%
  \makeatletter%
  \providecommand\color[2][]{%
    \errmessage{(Inkscape) Color is used for the text in Inkscape, but the package 'color.sty' is not loaded}%
    \renewcommand\color[2][]{}%
  }%
  \providecommand\transparent[1]{%
    \errmessage{(Inkscape) Transparency is used (non-zero) for the text in Inkscape, but the package 'transparent.sty' is not loaded}%
    \renewcommand\transparent[1]{}%
  }%
  \providecommand\rotatebox[2]{#2}%
  \newcommand*\fsize{\dimexpr\f@size pt\relax}%
  \newcommand*\lineheight[1]{\fontsize{\fsize}{#1\fsize}\selectfont}%
  \ifx\svgwidth\undefined%
    \setlength{\unitlength}{150.81241878bp}%
    \ifx\svgscale\undefined%
      \relax%
    \else%
      \setlength{\unitlength}{\unitlength * \real{\svgscale}}%
    \fi%
  \else%
    \setlength{\unitlength}{\svgwidth}%
  \fi%
  \global\let\svgwidth\undefined%
  \global\let\svgscale\undefined%
  \makeatother%
  \begin{picture}(1,1.17142346)%
    \lineheight{1}%
    \setlength\tabcolsep{0pt}%
    \put(0,0){\includegraphics[width=\unitlength,page=1]{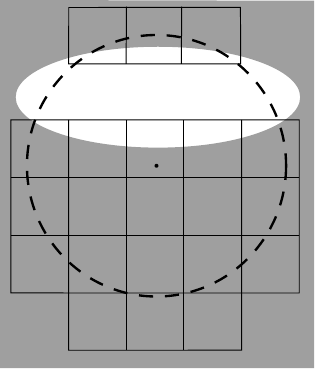}}%
    \put(0.50745373,0.62306931){\makebox(0,0)[lt]{\lineheight{1.25}\smash{\begin{tabular}[t]{l}$x$\end{tabular}}}}%
    \put(0.00854323,1.05707694){\makebox(0,0)[lt]{\lineheight{1.25}\smash{\begin{tabular}[t]{l}\Huge $U$\end{tabular}}}}%
    \put(0.18459985,0.86000144){\makebox(0,0)[lt]{\lineheight{1.25}\smash{\begin{tabular}[t]{l}$B_r(x)$\end{tabular}}}}%
  \end{picture}%
\endgroup%

\end{center}
\caption{A subspace $U$ violating the swiss cheese property causes recursive tiling to miss some tiles intersecting $B_r(x)$.\label{fig:failedSwissCheese}}
\end{figure}

We choose to tile the subspace $U\subset\H^3$ given in the following definition and proposition:
\begin{definition} \label{def:genCuspNbhd}
Let $M$ be an oriented, finite volume hyperbolic 3-manifold that can be completed to $M_\text{filled}$ by attaching circles. For a cusp $i$, we call $C_i$ a \emphasisDef{generalized cusp neighborhood} if
\begin{itemize}
\item the cusp is complete and $C_i$ is a cusp neighborhood of $M$, or
\item the cusp is incomplete and $C_i$ is given by a tube about a geodesic in $\H^3$ that is a lift of to the respective core curve $\gamma\subset M_\text{filled}\subset M$.
\end{itemize}
\end{definition}
We define generalized cusped neighborhoods to be disjoint or embedded analogously to \cite[Section~4.3]{goerner:tiling}. 
\begin{proposition} \label{prop:complementGeneralizedCuspNeighborhoods}
Let $M$ be an oriented, finite volume hyperbolic 3-manifold that can be completed to $M_\text{filled}$ by attaching circles. Let $C_i$ be embedded and disjoint generalized cusp neighborhoods for all cusps $i$. Let $U$ be the pre-image of $M_\text{filled}\setminus \bigcup_i C_i$ in the universal cover $\H^3\to M_\text{filled}$. Then, $U$ has the swiss cheese property.
\end{proposition}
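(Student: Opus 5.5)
The plan is to show directly that $B_r(x)\cap U$ is connected by exhibiting paths. Since the $C_i$ are embedded and disjoint in $M_\text{filled}$, their preimage in $\H^3$ is a disjoint union of open horoballs (for complete cusps) and open tubes $N_\rho(K)$ about lines $K$ (for incomplete cusps, each $K$ a lift of a core curve). Call these pieces $H_j$. Then $U=\H^3\setminus\bigcup_j H_j$, the pieces have pairwise disjoint closures (or at least disjoint interiors, which is all the argument uses), and each $H_j$ is a convex set.

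Fix $x\in U$, $r>0$ and $y\in B_r(x)\cap U$. Consider the geodesic segment $[x,y]\subset B_r(x)$. It meets only finitely many $H_j$ (local finiteness of the lifts), and since each $H_j$ is convex, $[x,y]\cap H_j$ is a single subsegment $[a_j,b_j]$ with $a_j,b_j\in\partial H_j$. Outside these subsegments the geodesic lies in $U$. It therefore suffices to replace each subsegment $[a_j,b_j]$ by a path in $\partial H_j\cap B_r(x)$ from $a_j$ to $b_j$; such a path avoids every other $H_{j'}$ by disjointness, so the concatenation is a path in $B_r(x)\cap U$.

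The key step, and the main obstacle, is to check that $\partial H_j\cap B_r(x)$ contains a path from $a_j$ to $b_j$, using $x\notin H_j$. I would first try the nearest-point projection $\pi_j\colon \H^3\setminus H_j\to\partial H_j$, or more concretely the radial projection of the segment $[a_j,b_j]$ outward onto $\partial H_j$. For a horoball, push points along geodesics emanating from the ideal center; for a tube, push along geodesics perpendicular to $K$. The needed estimate is that this pushed-out path $\sigma\subset\partial H_j$ stays within distance $<r$ of $x$.

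A cleaner way to get this estimate is to consider the function $f(p)=d(x,p)$ on the closed convex set $\overline{H_j}$, or the sublevel set $B_r(x)\cap\partial H_j$ directly. Since $x\notin H_j$, the set $\overline{B_r(x)}\cap\overline{H_j}$ is convex, being an intersection of convex sets. Its boundary relative to the ball, namely $\partial H_j\cap B_r(x)$, is the part of the sphere-like boundary of a convex body visible from the outside point $x$. I would argue that this part is connected: radially projecting from $x$, every ray from $x$ hitting the convex set $\overline{H_j}\cap \overline{B_r(x)}$ enters it through a unique first point on $\partial H_j$ or on $\partial B_r$. The set of directions hitting $\overline{H_j}$ is connected, since it is the cone over a convex set. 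The first-entry map is continuous, and first-entry points lying in the open ball $B_r(x)$ lie on $\partial H_j$. The points $a_j,b_j$ are first-entry points of their own directions. The geodesic arc of directions between them gives a path of first-entry points, each at distance at most $\max(d(x,a_j),d(x,b_j))<r$ from $x$. This last inequality needs that the first-entry distance along the interpolated directions is bounded by the endpoint values, which follows from convexity of $\overline{H_j}$ restricted to the totally geodesic plane through $x,a_j,b_j$: in that plane the problem reduces to a 2-dimensional convex region seen from an exterior point. I expect this convexity-in-a-plane argument to require the most care, particularly when the plane is tangent to the tube or horoball.
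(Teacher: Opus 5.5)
Your reduction is sound, and it treats one step more carefully than the paper does. The paper only asserts that removing finitely many disjoint lifts does not disconnect $B_r(x)$. Your detour of each crossing $(a_j,b_j)$ of $[x,y]$ along $\partial H_j$ justifies this, because $\partial H_j\cap H_{j'}=\varnothing$ for $j'\neq j$ (each $H_{j'}$ is open and disjoint from $H_j$).

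\textbf{The gap is in the single-obstacle step:} showing that $a_j$ and $b_j$ lie in the same path component of $\partial H_j\cap B_r(x)$. Your argument uses nothing about $H_j$ except convexity, and convexity cannot suffice. Take for $H$ the open slab between the two planes perpendicular to a geodesic through $x$ at the points at distances $1$ and $2$ from $x$, and take $r=3$. Then $\partial H\cap B_r(x)$ consists of two disjoint discs. The segment from $x$ to the point at distance $2.5$ along that geodesic enters $H$ through one disc and leaves through the other, and $B_r(x)\setminus H$ is disconnected.

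Concretely, your first-entry argument fails because $a_j$ and $b_j$ lie on the \emph{same} ray from $x$. The point $b_j$ is where that ray \emph{leaves} $\overline{H_j}$; it is not a first-entry point. Likewise, $\partial H_j\cap B_r(x)$ is not just the part of $\partial H_j$ visible from $x$: it also contains the far side, which is exactly where $b_j$ lies. Interpolating directions between first-entry points is correct as far as it goes, but it never reaches $b_j$.

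What is missing is the specific shape of horospheres and tube boundaries. The paper exploits it by viewing horoballs, normalized tubes and $B_r(x)$ as Euclidean balls, cones and balls in the upper half-space model. Within your framework, the natural fix is to prove directly that $\{p\in\partial H_j : d(x,p)<r\}$ is connected.
\begin{itemize}
\item \emph{Horosphere.} Normalize it to $\{t=1\}$. Then $\cosh d(x,p)=1+\frac{|z_p-z_x|^2+(1-t_x)^2}{2t_x}$, so this set is a Euclidean disc.
\item \emph{Tube boundary.} Let the tube have radius $\rho$ about a line $K$, and let $D=d(x,K)$. Use coordinates $(s,\theta)$, $\theta\in(-\pi,\pi]$, on the cylinder, measured from the foot of the perpendicular from $x$ to $K$, with $\theta=0$ in the direction of $x$. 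Then $\cosh d(x,p)=\cosh D\cosh\rho\cosh s-\sinh D\sinh\rho\cos\theta$, which is nondecreasing in $|s|$ and in $|\theta|$. So from any point $(s,\theta)$ of the sublevel set, the coordinate paths to $(0,\theta)$ and then to $(0,0)$ stay inside it.
\end{itemize}
With this lemma replacing your convexity argument, the rest of your proof goes through.
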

\begin{proof}
Consider one lift $N$ of one $C_i$. Use the upper half-space model. If $C_i$ is a cusp neighborhood, $N$ is a Euclidean ball. If $C_i$ is a tube about a geodesic, apply a transform such that $N$ is a Euclidean cone. $B_r(x)$ is also a Euclidean ball. Removing another ball or cone from it will not split it into several connected components. $B_r(x)$ only intersects finitely many lifts of any $C_i$. All lifts of all $C_i$ are disjoint. Thus removing them from $B_r(x)$ is not splitting $B_r(x)$ into several connected components either.
\end{proof}

\subsection{Enclosing horoneighborhood} \label{sec:encloHoro}

\begin{figure}[h]
\begin{center}
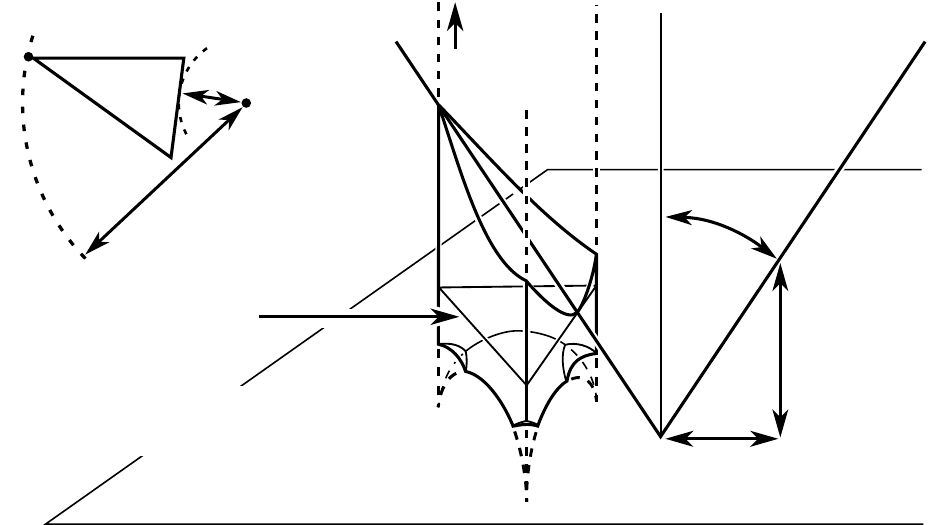
\end{center} 
\caption{Truncated tetrahedron $T_t\cap U$ and enclosing horoneighborhood in upper half-space model.\label{fig:enclosingNbhd}}
\end{figure}

Given an ideal tetrahderon $T\subset\H^3$ and a vertex $\vec{v}$ of $T$, the \emphasisText{extended tetrahedron} is the union of all lines starting at $\vec{v}$ and passing through the opposite face of $T$. A  \emphasisText{horoneighborhood} of $\vec{v}$ is the intersection of a horoball $B(s\vec{v})$ and the extended tetrahedron.

Consider a generalized cusp neighborhood $C_i$ about cusp $i$. For each tetrahedron $T_t$, consider all vertices $\vec{v}$ corresponding to that cusp. For each such vertex $\vec{v}$ take the smallest horoneighborhood containing the intersection of the generalized cusp neighborhood with the tetrahedron; see Figure~\ref{fig:enclosingNbhd}. If the generalized cusp neighborhood is large, we again might need to work in the universal cover $\H^3$ and extend the tetrahedron as above. We call the union of all these horoneighborhoods the \emphasisText{enclosing horoneighborhood $\hat{C}_i$} of $C_i$.

In the complete case, the enclosing horoneighborhood is just the same as the cusp neighborhood. In the incomplete case, it also depends on the triangulation. Analogously to cusp neighborhoods and cusp cross sections in the complete case, an enclosing horoneighborhood is also specified by the edge lengths of the horotriangles. These edge lengths are now per edge of a triangle in the cusp triangulation instead of per edge of the cusp triangulation.

\newpage

We use the enclosing horoneighborhood for simplicity. For example, we can use criterion in \cite[Figure~4.5.3]{goerner:tiling} to ensure that enclosing horoneighborhoods and thus the underlying generalized cusp neighborhoods are embedded or disjoint (if they are in standard form).

\subsection{Generalized cusp cross section}

We generalize \cite[Section~4.4]{goerner:tiling} to include the incomplete case.

We assume that the geometric ideal triangulation $\myTrig$ is oriented. This induces an orientation on the cusp triangulations and thus their triangles and the edges of each triangle. Note that an edge $e$ of a triangle is identified with an edge $e'$ of another triangle in an anti-parallel way.

Pick a cusp. A \emphasisText{generalized cusp cross section} assigns to complex lengths to the edges of each triangle in the cusp triangulation such that:
\begin{itemize}
\item The ratio of complex lengths of two consecutive edges of a triangle needs to be related through the respective negative cross ratio $-z_t$, $-1/(1-z_t)$ or $-(1-1/z_t)$, similar to \cite[Section~4.4]{goerner:tiling}.
\item There is a spanning tree in the dual 1-skeleton of the cusp triangulation such that: if the edges $e$ and $e'$ of two triangles are identified and cross an edge in the spanning tree, their complex lengths add to zero.
\end{itemize}
If the cusp is incomplete, the generalized cusp cross section also assigns a position in $\C^*$ to each vertes of each triangle in the cusp triangulation such that:
\begin{itemize}
\item The complex length of an edge of a triangle is the difference between the positions assigned to its endpoints in the respective order.
\item If $e$ and $e'$ of two triangles are identified, the positions assigned to the endpoints of $e$ and $e'$ are related by a similarity of $\C$ fixing $0$.
\item If, furthermore, $e$ and $e'$ cross an edge in the spanning tree, the positions are identical.
\end{itemize}

To obtain a generalized cusp cross section, pick a spanning tree and assign some complex length to one edge of one triangle. All other edge lengths are now recursively determined. In the incomplete case, also assign some position to some vertex, compute all other vertex positions recursively, then determine the fixed point of a non-trivial similarity to move all vertex positions such that the fixed point ends up at $0$.

Together with $\infty$, the vertices of a generalized cusp cross section span tetrahedra spinning about the line from $0$ to $\infty$ (corresponding to the core curve) under the appropriate identification of the universal cover $\tilde{M}_\text{filled}$ with the upper half space model $\left\{z+t\jmath\colon t>0\right\}$; again, see Figure~\ref{fig:enclosingNbhd}.

\subsection{Computing some enclosing horoneighboorhoods} \label{sec:computeHorotriangles}

Our next goal is to compute an enclosing horoneighborhood $\hat{C}_i$ from a generalized cusp cross section. More precisely, we compute the edge lengths of the horotriangles in $\hat{C}_i$. They then uniquely determine the horoneighborhood about each vertex of each tetrahedron corresponding to cusp $i$.

Consider the edges of one such horotriangle. Their lengths are given by the absolute values of the corresponding edges in the generalized cusp cross section divided by $h$. Here, $h$ is given as follows:
\begin{itemize}
\item For a complete cusp: $h=1$. Note that the corresponding cusp neighborhood $C_i$ depends on the choice of generalized cusp neighborhoods we made earlier. This is ok as we normalize and rescale $C_i=\hat{C}_i$ later anyway.
\item For an incomplete cusp: $h$ is the Euclidean distance to $0$ of the triangle in $\C$ spanned by the positions of the corresponding vertices in the generalized cusp cross section. $h$ is also the Euclidean height of a horotriangle touching the Euclidean cone with angle $\pi/4$ and tip at $0$ in the upper half-space model; again, see  Figure~\ref{fig:enclosingNbhd}. Thus, the resulting horoneighborhood encloses a tube about the corresponding core curve of radius $r=\sinh^{-1}(1)$. This is independent of the choice of generalized cusp cross section.
\end{itemize}

\subsection{Scaling enclosing horocusp neighborhoods} \label{sec:scalingNeighborhoodsIncomplete}

Note that scaling an enclosing horoneighborhood $\hat{C}_i$ by scaling all horotriangle edge lengths by the same factor $s$ corresponds to changing the size of the underlying generalized cusp neighborhood $C_i$. For an incomplete cusp and starting with the above $\hat{C}_i$, scaling by $s$ results in a tube $C_i$ of radius $r=\sinh^{-1}(s)$. For a complete cusp, scaling by $s$ scales the area of the cusp neighborhood $C_i$ by $s^2$.

The goal of this section is to scale enclosing horoneighborhoods such that they
\begin{itemize}
\item intersect the tetrahedra in standard form; see \cite[Definition~4.5.1]{goerner:tiling},
\item are embedded and disjoint and
\item avoid the incenter of each tetrahedron.
\end{itemize}

\cite[Lemma~4.5.2 and Figure~4.5.3]{goerner:tiling} give a way to check the first two conditions. For the third condition, we similarly use:
\begin{lemma}
Let $T$ be an ideal geodesic tetrahedron with cross ratio $z$. Pick a vertex $v$ of $T$. Let $H$ be any horoball about $v$ not containing the incenter of $T$. The supremal area of the intersection of $\partial H$ with $T$ across all such horoballs is
\[
A(z)=A\left(\frac{1}{1-z}\right)=A\left(1-\frac{1}{z}\right)=\frac{1}{4}\cdot\frac{1 + |z| + |1-z|}{1\cdot |z| \cdot |1-z |}\cdot\ImPart(z).
\]
\end{lemma}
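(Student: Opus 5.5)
We work in the upper half-space model with $v=\infty$ and the other three vertices at $0$, $1$, $z$ with $\ImPart(z)>0$. A horoball about $\infty$ is then $\{t>h\}$. Its boundary meets $T$ in a Euclidean triangle with vertices $0,1,z$ at height $h$. Since the hyperbolic metric on the horosphere $\{t=h\}$ is the Euclidean metric scaled by $1/h$, this horotriangle has area $\tfrac{1}{2}\ImPart(z)/h^2$. The supremum over horoballs $\{t>h\}$ not containing the incenter is attained in the limit $h\downarrow t_0$, where $t_0$ is the Euclidean height of the incenter. Hence $A(z)=\ImPart(z)/(2t_0^2)$, and the task reduces to computing $t_0$.

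To find the incenter, I will use the fact that the in-sphere is tangent to the three vertical faces over the edges $01$, $1z$, $z0$ of the Euclidean triangle $\Delta$ with vertices $0,1,z$. Every hyperbolic ball is a Euclidean ball. The hyperbolic distance from a point $(w,t)$ to the vertical plane over a line $\ell$ is a monotone function of $\mathrm{dist}_{\C}(w,\ell)/t$ alone, namely $\sinh^{-1}(\mathrm{dist}/t)$. Equidistance from the three vertical faces therefore forces $w$ to be equidistant from the three lines, so $w$ is the Euclidean incenter of $\Delta$, and the hyperbolic inradius is $r=\sinh^{-1}(\rho/t_0)$, where $\rho=2\,\mathrm{Area}(\Delta)/\mathrm{perimeter}=\ImPart(z)/(1+|z|+|1-z|)$ is the Euclidean inradius.

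The remaining condition is tangency to the fourth face, the hemisphere over the circumcircle of $0,1,z$. Here I will use the symmetry of the ideal tetrahedron. The incenter $p$ is fixed by the order-$4$ Klein-four symmetries of $T$, so it is also the point where the common perpendiculars of opposite edges meet. A cleaner route is to compute $t_0$ directly from the requirement that $p$ be equidistant from the face $\{0,1,\infty\}$ and the face $\{0,1,z\}$. To do this, I apply the Möbius map fixing $0$ and $1$ and sending $z\mapsto\infty$. This map carries the opposite face to a vertical plane, and the horizontal position of $p$ transforms explicitly, which gives an equation for $t_0$.

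I expect the result to be $t_0^2=2|z||1-z|\rho\,/\,\ImPart(z)\cdot(\ldots)$. Concretely, matching the claimed formula requires
\[
t_0^2=\frac{2|z||1-z|}{1+|z|+|1-z|}.
\]
Substituting this gives
\[
A=\frac{\ImPart(z)}{2t_0^2}=\frac14\cdot\frac{1+|z|+|1-z|}{|z||1-z|}\,\ImPart(z),
\]
which is the statement. So the main obstacle is verifying this value of $t_0^2$.

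My plan for that step is a check via the distance formula. The hyperbolic distance from $(w,t)$ to the hemisphere of Euclidean center $c$ and radius $R$ satisfies $\sinh d=\big|\,|w-c|^2+t^2-R^2\big|/(2Rt)$. Setting this equal to $\rho/t$ gives
\[
R^2-|w-c|^2-t_0^2=2R\rho.
\]
By Euler's theorem $|w-c|^2=R^2-2R\rho$, so $t_0^2=0$. That contradiction shows the hemisphere tangency must be handled by signed distance, or I have misidentified which side the in-sphere lies on.

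To resolve this, I will instead use $\cosh$ of the distance from $p$ to the hemisphere, written as a Minkowski inner product with the unit spacelike normal of the plane. The face's plane vector is $(|c|^2-R^2,\;1,\;c)$, suitably normalized, and the point $p$ is $\tfrac{1}{t}(|w|^2+t^2,\;1,\;w)$. Solving the linear equation $\langle p,n_4\rangle=\langle p,n_1\rangle$ for $t^2$ then yields $t_0^2$. The symmetry check is that the final expression must be invariant under $z\mapsto 1/(1-z)$ and $z\mapsto 1-1/z$, up to the rescaling coming from relabeling vertices. I will confirm this against the claimed symmetric form $A(z)=A(1/(1-z))=A(1-1/z)$, which follows because $\ImPart(z)/(|z||1-z|)$ and $(1+|z|+|1-z|)/(|z||1-z|)$ transform compatibly under these substitutions.
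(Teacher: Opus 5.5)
Your reduction matches the paper's. With $v$ at $\infty$, the sought area is $\ImPart(z)/(2t_0^2)$, where $t_0$ is the Euclidean height of the incenter. The incenter's horizontal position is the Euclidean incenter $w$ of $\Delta$, and Euler's formula $|w-c|^2=R^2-2R\rho$ is the right tool. The gap is that the one step with real content, determining $t_0$, is never carried out. You read the value $t_0^2=2|z||1-z|/(1+|z|+|1-z|)$ off the formula you are trying to prove. Your attempted verification yields $t_0=0$, and the Minkowski-product alternative is only announced, not computed. As written, the proposal does not prove the lemma.

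The contradiction comes from a sign error, and fixing it completes your own argument.
\begin{itemize}
\item The tetrahedron with vertices $0,1,z,\infty$ is the part of the vertical prism over $\Delta$ lying \emph{above} the hemisphere over the circumcircle. So the incenter satisfies $|w-c|^2+t_0^2>R^2$, and tangency to the fourth face reads $|w-c|^2+t_0^2-R^2=2R\rho$.
\item Combined with Euler, this gives $t_0^2=4R\rho$. Since $R=|z||1-z|/(2\ImPart(z))$ and $\rho=\ImPart(z)/(1+|z|+|1-z|)$, this is exactly the required value. In fact your tentative expression $2|z||1-z|\rho/\ImPart(z)$ already equals $4R\rho$, with no extra factor.
\item This is the paper's computation in different language. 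The paper views the in-sphere as a Euclidean sphere of radius $\rho$ externally tangent to the hemisphere. Its Euclidean center then has height $h_E$ with $h_E^2=(R+\rho)^2-|w-c|^2=4R\rho+\rho^2$, and the hyperbolic center sits at the geometric-mean height $\sqrt{(h_E-\rho)(h_E+\rho)}=\sqrt{4R\rho}$.
\end{itemize}

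Your fallback would not have avoided the issue by itself. Pairing with a unit spacelike normal gives $\sinh$, not $\cosh$, of the signed distance. Moreover, $\langle p,n_4\rangle=\langle p,n_1\rangle$ is correct only if both normals point into $T$. With $n_4$ flipped it reduces to the same spurious equation $R^2-|w-c|^2-t_0^2=2R\rho$.

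Finally, the identities $A(z)=A(1/(1-z))=A(1-1/z)$ are cleanest seen from $A=\mathrm{Area}(\Delta)\,(a+b+c)/(2abc)$, with $a,b,c$ the side lengths of $\Delta$. This expression is invariant under similarities of $\Delta$, and the three substitutions merely relabel its vertices. That is more reliable than the factor bookkeeping you sketch, whose two factors do not multiply to the stated formula.
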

\begin{proof}
We use the upper half-space model such that the vertices are at $0$, $1$, $z$ and $\infty\in\C\cup\{\infty\}$. Let $a=1$, $b=|z|$ and $c=|1-z|$ be the lengths of the Euclidean triangle formed by the vertices in $\C$. It has area $A_\text{Eucl}=\ImPart(z)/2$, inradius $r=2A_\text{Eucl}/(a+b+c)$ and circumradius $R=abc/4A_\text{Eucl}$. Euler's triangle formula gives the distance $d=\sqrt{R(R-2r)}$ between incenter and circumcenter. The insphere of $T$ projects to the disk bound by the incircle of the Euclidean triangle. Similarly, the face of $T$ opposite $\infty$ projects to the Euclidean triangle and the plane supporting the face projects to the disk bound by the circumcirlce of the Euclidean triangle. Regarding insphere of $T$ as a Euclidean sphere, its center, lowest and highest points have heights $h_\text{Eucl}=\sqrt{(R+r)^2-d^2}$, $h_\text{Eucl}-r$ and $h_\text{Eucl}+r$. The geometric mean of the latter two gives the Euclidean height $h$ of $T$'s incenter. We evaluate $A(z)=A_\text{Eucl}/h^2$. 
\end{proof}

Disjoint and embedded generalized cusp neighborhoods are determined by the area of their enclosing horoneighborhoods. Here, the area of an enclosing horoneighborhood is simply the sum of the Euclidean areas of all the horotriangles. The area scales quadratically when scaling the edge lengths. Thus, each of the above conditions can be expressed as an inequality $A(\hat{C}_i)A(\hat{C}_j)\leq A_{ij}$ in the areas $A(\hat{C}_i)$ and $A(\hat{C}_j)$ of two enclosing horoneighborhoods $\hat{C}_i$ and $\hat{C}_j$ about cusps $i$ and $j$ where $i$ and $j$ can be equal. Thus, these conditions give rise to a matrix $(A_{ij})$ akin to the maximal cusp area matrix in \cite[Definition~1.1.2]{goerner:tiling}. We apply \cite[Algorithm~11.1.1]{goerner:tiling} to this matrix to obtain areas corresponding to maximal disjoint and embedded generalized cusp neighborhoods in an unbiased way.

\begin{remark} \label{rem:perfDepNeigh}
The choice of generalized cusp neighborhoods affects the radius of the spine and truncated tetrahedron computed in Sections~\ref{sec:spunSpine} and~\ref{sec:radiusTrunc} and, thus, the performance of the length spectrum algorithm when there are multiple cusps. The above choice is canonical and easy to compute but not necessarily the one giving the best performance. We do not expect that optimizing this choice for performance is worth the effort except in some extreme examples. We expect that a judicious choice of center for the spine will have similarly minor effects on performance. 
\end{remark}

\section{Computing a spine radius for spun-triangulations} \label{sec:spineIncomplete}

From now on, let $M$ be an oriented, finite volume hyperbolic 3-manifold that can be completed to $M_\text{filled}$ by attaching circles. Let $T_t\subset\H^3$ be ideal geodesic tetrahedra such that $P=\cup T_t$ is a developed fundamental polyhedron for $M$. Let $C_i$ be generalized cusp neighborhoods such that the corresponding enclosing horocusp neighborhoods $\hat{C}_i$ are embedded and disjoint, intersect each tetrahedron $T_t$ of $\myTrig$ in standard form and avoid the incenter $\vec{s}_t$ of each tetrahedron $T_t$. Section~\ref{sec:scalingNeighborhoodsIncomplete} describes how to find such $C_i$ and $\hat{C}_i$. Let $U$ be the pre-image of $M_\text{filled}\setminus \bigcup_i C_i$ in the universal cover $\H^3\to \Gamma\backslash \H^3\cong M_\text{filled}$ as in Proposition~\ref{prop:complementGeneralizedCuspNeighborhoods}.

\subsection{Tetrahedral vertex vectors} \label{sec:perTetCoreCurve}

Similar to Section~\ref{sec:spineVertexVectors}, we lift the ideal vertices of each tetrahedron $T_t$ to light-like vectors $\vec{v}_t^v\in\posLightCone$. This time, however, we use the enclosing horoneighborhoods $\hat{C}_i$ rather than the cusp neighborhoods $C_i$.

We denote by $N^v_t$ the lift of the generalized cusp neighborhood corresponding to vertex $v$ of $T_t$; see Figure~\ref{fig:enclosingNbhd}. If the corresponding cusp is complete, $N^v_t$ is equal to the horoball $B\left(\vec{v}^v_t\right)$. Otherwise, $N^v_t$ is a tube about a geodesic and $N^v_t\cap T_t$ is contained in its enclosing cusp neighborhood $B\left(\vec{v}^v_t\right)$. We denote this geodesic by $c^v_t$. It corresponds to a core curve in $M_\text{filled}$ and one of its endpoints coincides with $\vec{v}^v_t$. For a complete cusp, we set $c^v_t=\varnothing$.

\subsection{Spine radius} \label{sec:spunSpine}

We proceed similarly to Section~\ref{sec:spinePoints} but need to change the term for $\vec{s}^{kl}_t\in\H^3$ in 
Algorithm~\ref{algo:spine} and might need to include additional points when computing an upper bound $\overline{r}(S_t)$ for the spine radius as maximum of several distances. Here, we again compute the radius about the incenter $\vec{s}_t$ of the tetrahedron $T_t$.

Let $\vec{w}^{kl}_t=\vec{v}^k_t\vec{v}^l_t\cap \partial N^k_t\in\H^3$ be the intersection of edge $kl$ of $T_t$ with the boundary of the respective lift $N^k_t$ of the generalized cusp neighborhood corresponding to vertex $k$ of $T_t$; see Figure~\ref{fig:enclosingNbhd}. We set $\vec{s}^{kl}_t=\left(\vec{w}^{kl}_t+\vec{w}^{lk}_t\right)\postNorm$ to be the midpoint of the edge when truncated by the generalized cusp neighborhoods. Since the enclosing horocusp neighborhood is equal to the cusp neighborhood if the corresponding cusp is complete, this is a generalization of the expression for $\vec{s}^{kl}_t$ in Algorithm~\ref{algo:spine}. Recall that, in general, the enclosing horocusp neighborhood for $N^k_t$ is the horoball $B\left(\vec{v}^k_t\right)\subset\H^3$ and let $\vec{v}^{kl}_t=\vec{v}^k_t\vec{v}^l_t\cap \partial B\left(\vec{v}^k_t\right)\in\H^3$ be the corresponding intersection. Note that $\vec{v}^{kl}_t$ and $\vec{w}^{kl}_t$ coincide if a cusp is complete.

We now compute an upper bound $\overline{r}(S_t)$ for the spine radius as:
$$\overline{r}(S_t)=\max\left\{
\begin{array}{cccc}
d\left(\vec{s}_t, \vec{s}^{kl}_t\right) &\text{for} &0\leq k < l \leq 3\\
d\left(\vec{s}_t, \vec{v}^{kl}_t\right) &\text{for} &0\leq k, l \leq 3, k\ne l & \text{if not (verified that) $\vec{s}^{kl}_t\in T_t\setminus  B\left(\vec{v}^k_t\right)$}.
\end{array}\right\}$$

To compute $\vec{w}^{kl}_t$, go back to Section~\ref{sec:computeHorotriangles} and consider the corresponding embedded cusp triangle. Let $q$ be the position of its vertex corresponding to the edge $kl$ of $T_t$ and $h$ be its distance from $0$, see Figure~\ref{fig:enclosingNbhd}. Let $\mu^{kl}_t=|q|/h$. We have $\vec{w}^{kl}_t=\vec{v}^k_t\vec{v}^l_t\cap \partial B\left(\mu^{kl}_t\vec{v}^k_t\right)\in\H^3$.

To determine whether we can skip $\vec{v}^{kl}_t$ in the above expression, we use
\[
\vec{s}^{kl}_t\in T_t\setminus  B\left(\vec{v}^k_t\right)\quad \Leftrightarrow\quad \log\left(\mu^{kl}_t\right)-\log\left(\mu^{lk}_t\right)< d\left(B\left(\vec{v}^k_t\right), B\left(\vec{v}^l_t\right)\right)
\] which is automatic if the cusp corresponding to vertex $v$ of $T_t$ is complete. We can use \cite[Figure~4.5.3]{goerner:tiling} to compute $d\left(B\left(\vec{v}^k_t\right), B\left(\vec{v}^l_t\right)\right)$ from the real edge lengths in the generalized cusp cross section. To see this equivalence, note that $\vec{s}^{kl}_t\in T_t\setminus  B\left(\vec{v}^k_t\right)$ is equivalent to $\vec{s}^{kl}_t$ having distance less than $d\left(B(\vec{v}^k_t), B(\vec{v}^l_t)\right)/2$ to the midpoint of $\vec{v}^{kl}_t$ and $\vec{v}^{kl}_t$. Here, we use the signed distance that is positive if $\vec{s}^{kl}_t$ is closer to $\vec{v}^k_t$ than the midpoint. The factor $\mu^{kl}_t$ moves $\vec{w}^{kl}_t$ closer to $\vec{v}^k_t$ by distance $\log\left(\mu^{kl}_t\right)$ and thus moves $\vec{s}^{kl}_t$ by distance $\log\left(\mu^{kl}_t\right)/2$. The opposite applies to the factor $\mu^{lk}_t$.




The following proposition can be used with the generalized cusp neighborhoods computed in Section~\ref{sec:scalingNeighborhoodsIncomplete}.

\begin{proposition} \label{prop:spunSpine}
Let $M$, $T_t$, $U$ and $\overline{r}(S_t)$ as above. For each $\varepsilon>0$, there are subsets $S_t\subset T_t$ for all tetrahedra $T_t$ such that 
\begin{itemize}
\item $S=\cup S_t$ forms a spine in $M$,
\item $S_t$ is contained in an $\varepsilon$-neighborhood of $U$ where $U\subset\H^3$ is the complement of the generalized cusp neighborhoods used to construct $S_t$,
\item the radius of each $S_t$ about $\vec{s}_t$ is less than or equal to $\overline{r}(S_t)$.
\end{itemize}
\end{proposition}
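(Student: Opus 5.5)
We would mimic the proof of Proposition~\ref{prop:spine}, but place the spine so that it avoids the generalized cusp neighborhoods. For each tetrahedron $T_t$ and each face $f$, we choose a point $\vec{s}_t^f$ on face $f$ that lies in $U$. For instance, we can take the incenter of the triangle spanned by those points $\vec{s}_t^{kl}$ with $k,l\ne f$, and check that it lies outside the $N^v_t$ by the standard-form assumption. The points $\vec{s}_t^{kl}$ are now the midpoints of the edges truncated by the $N^k_t$. Since $\vec{w}^{kl}_t$ and $\vec{w}^{lk}_t$ come from the generalized cusp neighborhoods, which match under the face-pairings $g^f_t$, the points $\vec{s}_t^{kl}$ and $\vec{s}_t^f$ are computed $\SO(1,3)$-equivariantly from matching data. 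Hence the pieces $S_t$ glue along faces, exactly as in the complete case.

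The first candidate for $S_t$ is again the union of the 12 geodesic triangles $\vec{s}_t\,\vec{s}_t^f\,\vec{s}_t^{kl}$. Topologically this is still a butterfly embedded in $T_t$, so $S=\cup S_t$ is an embedded dual $2$-skeleton, hence a spine of $M$. The complement of $S$ near each ideal vertex is a product neighborhood of the vertex link; this is unchanged from the complete case.

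The difficulty is that a geodesic triangle with vertices in $U$ need not lie in $U$: it can dip into a tube $N^v_t$ or into a horoball. To get the $\varepsilon$-neighborhood statement, we would modify the triangles. Within $T_t$, each $N^v_t\cap T_t$ is contained in the horoneighborhood $B(\vec{v}^v_t)\cap T_t$, and $U\cap T_t$ contains $T_t\setminus\bigcup_v B(\vec{v}^v_t)$. We would push any part of a triangle lying inside $B(\vec{v}^v_t)$ radially, that is along lines through $\vec{v}^v_t$, onto the horosphere $\partial B(\vec{v}^v_t)$, or just outside $N^v_t$ within distance $\varepsilon$. This keeps the face-matching, because the projection is defined equivariantly from the vertex data. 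It also preserves the spine topology, because the projection is a homotopy supported near the vertex that keeps $S_t$ transverse to the vertex rays.

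For the radius bound, the maximal distance from $\vec{s}_t$ is attained on the convex hull of the relevant vertices. If each $\vec{s}^{kl}_t$ lies in $T_t\setminus B(\vec{v}^k_t)$, the unmodified triangles need no pushing near that edge, and the bound $d(\vec{s}_t,\vec{s}^{kl}_t)$ suffices as in Proposition~\ref{prop:spine}. Otherwise, the pushed part lies on $\partial B(\vec{v}^k_t)\cap T_t$, a horotriangle. Distance to $\vec{s}_t$ from points on that horotriangle portion is controlled by its vertices, including $\vec{v}^{kl}_t$ on the edges, which explains the extra terms $d(\vec{s}_t,\vec{v}^{kl}_t)$.

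The main obstacle is this last step: showing that the distance from $\vec{s}_t$ to points on the pushed (horospherical) part of $S_t$ is bounded by the maximum over the listed vertices. Horospheres are not convex from the outside, so a distance function restricted to a horotriangle need not be maximized at its corners. The first thing to try is using that $\vec{s}_t$ lies outside $B(\vec{v}^k_t)$: the horoball's nearest-point structure then makes distance from $\vec{s}_t$ along the horotriangle monotone towards the ideal faces, so it is bounded by the value at the corners $\vec{v}^{kl}_t$ within $T_t$. If that fails, the fallback is to replace the horospherical patch by the geodesic triangle spanned by $\vec{s}_t^f$ and the relevant $\vec{v}^{kl}_t$, which lies within $\varepsilon$ of $U$ after a small shrinking of the horoball, and to conclude convexity of distance on geodesic triangles.
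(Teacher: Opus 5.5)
\textbf{There is a genuine gap.} Your starting point matches the paper's: take the spine of Proposition~\ref{prop:spine} built from the truncated midpoints $\vec{s}^{kl}_t$, then push it along rays from the ideal vertex. The problem is that the push you describe does not produce a spine.

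Radially projecting $S_t\cap B(\vec{v}^k_t)$ onto $\partial B(\vec{v}^k_t)$ (or onto $\partial N^k_t$) is not injective on $S_t$. Your justification that $S_t$ is ``transverse to the vertex rays'' fails, because sheets of the butterfly separating the regions of two other vertices $l,m$ can also enter $B(\vec{v}^k_t)$. For example, take a face $f$ containing $k$ with $\vec{s}^f_t\in B(\vec{v}^k_t)$. Nothing prevents this; the extra terms in $\overline{r}(S_t)$ exist precisely because $\vec{s}^{kl}_t$ may lie in $B(\vec{v}^k_t)$. Let $p$ be a point of the leg from $\vec{s}^f_t$ to $\vec{s}^{lm}_t$ inside the horoball. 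The ray from $\vec{v}^k_t$ down to $p$ must first leave the region of vertex $k$, so it meets $S_t$ at a point above $p$ that is also in the horoball. Your projection identifies these two points, and being homotopic to the identity does not preserve embeddedness.

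The missing idea is to deform by an \emph{embedding} $\psi_t\colon T_t\to T_t$. It should be the identity on $T_t\cap U$, and on $T_t\cap N^k_t$ it should compress each vertex ray strictly monotonically into the $\varepsilon$-collar of $\partial N^k_t$. The paper uses $z+t\jmath\mapsto z+\theta(t)\jmath$ for a horoball and $z+t\jmath\mapsto z+\theta(st/|z|)\tfrac{|z|}{s}\jmath$ for a tube, with $\theta(t)=1+(e^\varepsilon-1)(1-e^{1-t})$. Equivariance then gives a spine. The $\varepsilon=0$ limit of this map is your projection, which is exactly why the statement only claims an $\varepsilon$-neighborhood of $U$.

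The radius bound, which you leave open, needs horoconvexity rather than convexity, using the fact that closed balls are horoconvex. For $l\neq k$, let $\vec{b}^{kl}_t$ be $\vec{s}^{kl}_t$ if it lies on or below $\partial B(\vec{v}^k_t)$, and $\vec{v}^{kl}_t$ otherwise. By the definition of $\overline{r}(S_t)$, these points lie in $\overline{B}_{\overline{r}(S_t)}(\vec{s}_t)$. Their horoconvex closure contains a topological triangle $F$ below $\partial B(\vec{v}^k_t)$ with the same vertical projection as $T_t$. For $x\in S_t\cap N^k_t$, the point $\psi_t(x)$ is below $x$ but still in $B(\vec{v}^k_t)$. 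It therefore lies on the vertical geodesic segment from $x$ to its vertical projection onto $F$. Hence it lies in the horoconvex closure of $x$ and the $\vec{b}^{kl}_t$, and so in the ball.

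This also resolves your worry about horotriangles: a horotriangle lies in the horoconvex closure of its corners, so distance from $\vec{s}_t$ on it is maximized at the corners. However, using the corners $\vec{v}^{kl}_t$ for every $l$ would not reproduce $\overline{r}(S_t)$. The term $d(\vec{s}_t,\vec{v}^{kl}_t)$ only enters when $\vec{s}^{kl}_t$ is not verified to lie outside $B(\vec{v}^k_t)$.

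Three smaller points:
\begin{itemize}
\item Your fallback with geodesic triangles fails. A geodesic triangle with vertices on $\partial B(\vec{v}^k_t)$ bulges into the horoball by an amount not controlled by $\varepsilon$.
\item For the same reason, your claim that no pushing is needed near edge $kl$ when $\vec{s}^{kl}_t\notin B(\vec{v}^k_t)$ does not follow, since the complement of a horoball is not convex.
\item Your claim that $\vec{s}^f_t\in U$ follows from standard form is unjustified, but it is also unnecessary once you deform by an embedding.
\end{itemize}
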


The spine we construct in the proof of the above proposition no longer consists of totally geodesic triangles as it did in Proposition~\ref{prop:spine}. This gives us the flexibility to isotope the spine into an $\varepsilon$-neighborhood of $U$. We use the following definition in the proof of the above proposition.

\begin{definition}
A \emphasisDef{horocyclic line segment} between $a$ and $b\in\H^n$ is a segment between $a$ and $b$ in a horocycle containing $a$ and $b$. A set $\Omega\subset\H^n$ is \emphasisDef{horoconvex} if for any two distinct $a,b\in \Omega$, all horocyclic line segments between $a, b$ are contained in $\Omega$. The \emphasisDef{horoconvex closure} of a set $S\subset\H^n$ is the smallest horoconvex $\Omega\subset\H^n$ with $S\subset\Omega$.
\end{definition}

\begin{example}
A ball and a horoball are horoconvex. The horoconvex closure of two points is banana shaped. Every horoconvex set is also convex. Note that the horoconvex closure of $S$ is also the intersection of all balls containing $S$. The horoconvex closure of $S$ contains the convex closure of $S$.
\end{example}

\begin{proof}[Proof of Proposition~\ref{prop:spunSpine}]
We start with the spine $S=\cup S_t$ for $M$ constructed in the proof of Proposition~\ref{prop:spine}; see Figure~\ref{fig:deformedSpine}. Note that the radius of each $S_t$ is equal to $\max_{0\leq k<l\leq 3}\left\{d\left(\vec{s}_t,\vec{s}_t^{kl}\right)\right\}\leq\overline{r}(S_t)$. However, $S_t$ might not be contained in $U$.\\
\begin{figure}[h]
\begin{center}
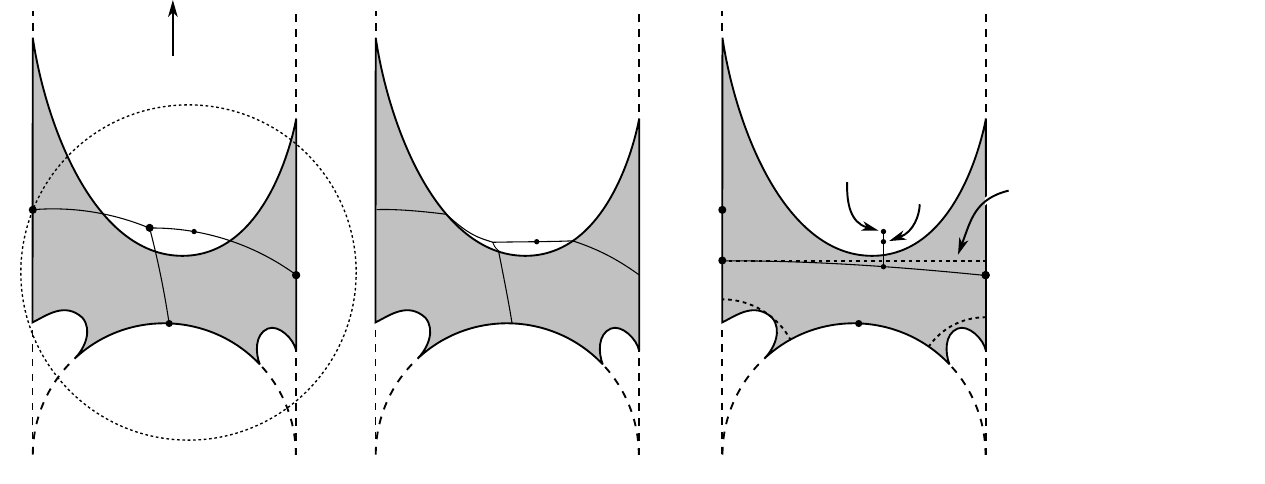
\end{center}
\caption{From left to right: spine $S_t\subset T_T$ (restricted to face $T^f_t$) contained in $\overline{B}_{\overline{r}(S_t)}\left(\vec{s}_t\right)$ but not contained in an $\varepsilon$-neighborhood of $U$, a point $x$ on it; deformed spine $\psi_t(S_t)\subset T_t$ in an $\varepsilon$-neighborhood of $U$; points proving that $\psi_t(x)$ is in the horoconvex hull of $x$, $\vec{b}^{kl}_t=\vec{v}^{kl}_t$ and $\vec{b}^{km}_t=\vec{s}^{km}_t$ and thus $\psi_t(x)\in \overline{B}_{\overline{r}(S_t)}\left(\vec{s}_t\right)$. \label{fig:deformedSpine}}
\end{figure}
It suffices to apply an embedding $\psi_t\from T_t\to T_t$ to each $S_t\subset T_t$ so that the image $\psi_t(S_t)$ is contained in an $\varepsilon$-neighborhood of $U$ and in $\overline{B}_{\overline{r}(S_t)}\left(\vec{s}_t\right)$. The $\psi_t$ will be compatible with the face-pairing matrices so that $\cup \psi_t(S_t)$ forms another spine in $M$ which we will call a deformed spine; again see Figure~\ref{fig:deformedSpine}.\\
Without loss of generality, assume $\varepsilon < \log(1+1/e)$. To construct $\psi_t\from T_t\to T_t$, we split $T_t$ into the truncated tetrahedron $T_t\cap U$ and $T_t\cap N^k_t=T_t\cap \left(N^k_t\setminus c^k_t\right)$ with $k=0,\dots, 3$. On $T_t\cap U$, we set $\psi_t$ to the identity. To describe $\psi_t$ on $T_t\cap N^k_t$, recall that $N^k_t\setminus c^k_t$ is a horoball or a tube about a geodesic $c^k_t$ without $c^k_t$ (depending on whether the corresponding cusp is complete).  We use the upper-halfspace model $\left\{z+t \jmath : t >0\right\}$. We can then identify $N^k_t\setminus c^k_t$ with $N=\left\{z+t \jmath : t>1\right\}$ or $N=\left\{z + t \jmath : 0 < |z| < st\right\}$ for some $s>0$ such that the vertex $\vec{v}^k_t$ of $T_t$ is at $\infty$. Let
\[
\pi_N\from N\to N, 
z+t \jmath \mapsto z+\theta(t)\jmath \quad\text{or}\quad z+t\jmath \mapsto z + \theta\left(\frac{st}{|z|}\right) \frac{|z|}{s}\jmath\text{, respectively,}
\]
where $\theta\from [1,\infty)\to [1,e^\varepsilon), t\mapsto 1 + \big(e^\varepsilon -1\big) \big(1-e^{1-t}\big)$. On $T_t\cap N^k_t$, we set $\psi_t$ to $\pi_N$ under the identification of $N^k_t\setminus c^k_t$ with $N$. This gives a well defined embeddings $\psi_t\from T_t\to T_t$ which extend to an embedding $M\to M$. To see this, note that $\pi_N$ is an embedding moving points downwards (since $\varepsilon < \log(1+1/e)$) such that the image of a point  $x$ is determined by $x$ itself and the downward projection of $x$ onto $\partial \overline{N}$. Thus, $T_t$ is mapped into $T_t$ under the identification. Furthermore, $\pi_N$ can be extended to $\partial\overline{N}$ where it is the identity. Thus, we can set $\psi_t$ to be $\pi_N$ on $T_t\cap N^k_t$.
The range of $\pi_N$ is contained in the $\varepsilon$-neighborhood of $\partial \overline{N}$. Thus, the range of $\psi_t$ and, in particular, $\psi_t(S_t)\subset T_t$ is contained in the $\varepsilon$-neighborhood of $U$.

It is left to show that that $\psi_t(S_t)\subset \overline{B}_{\overline{r}(S_t)}\left(\vec{s}_t\right)$. Note that the incenter $\vec{s}_t$ of $T_t$ is in $U$ due to the choices in Section~\ref{sec:scalingNeighborhoodsIncomplete}. Let $x\in S_t$. Thus, by construction of $S_t$, we have $x\in \overline{B}_{\overline{r}(S_t)}\left(\vec{s}_t\right)$. If $x\in U$, then $\psi_t(x)=x$ and, thus, $\psi_t(x)\in \overline{B}_{\overline{r}(S_t)}\left(\vec{s}_t\right)$. Otherwise, let $k$ be the vertex of $T_t$ such that $x\in T_t\cap N^k_t$. We again work in the upper-halfspace model such that the vertex is at $\infty$. Consider $\partial B\left(\vec{v}^k_t\right)$, the horosphere that is a Euclidean horizontal plane and that bounds $T_t\cap N^k_t$. For each $l\ne k$, there is a point $\vec{b}^{kl}_t\in\overline{B}_{\overline{r}(S_t)}\left(\vec{s}_t\right)$ on the edge $kl$ of $T_t$ below or on $\partial B\left(\vec{v}^k_t\right)$: $\vec{s}^{kl}_t$ if $\vec{s}^{kl}_t$ is below $\partial B\left(\vec{v}^k_t\right)$ or $\vec{v}^{kl}_t$ otherwise. Consider the horoconvex closure of $\vec{b}^{kl}_t$ with $l\ne k$. It contains a topological triangle $F$ below $\partial B\left(\vec{v}^k_t\right)$ having the same vertical projection as $T_t$. Let $x'$ be the vertical projection of $x$ onto $F$. $\psi_t(x)$ is on the vertical line segment between $x$ and $x'$. Thus, $\psi_t(x)$ is in the horoconvex closure of $x\in \overline{B}_{\overline{r}(S_t)}\left(\vec{s}_t\right)$ and $\vec{b}^{kl}_t\in \overline{B}_{\overline{r}(S_t)}\left(\vec{s}_t\right)$ with $l\ne k$. Hence, $\psi_t(x)\in \overline{B}_{\overline{r}(S_t)}\left(\vec{s}_t\right)$.
\end{proof}

\begin{remark}
In the proof of Proposition~\ref{prop:spunSpine}, note that $\psi_t$ is not an embedding when $\varepsilon=0$ and $\psi_t(S_t)$ might no longer be an embedded spine. If the generalized cusp neighborhoods do not touch $\vec{s}_t$, we can use slightly enlarged generalized cusp neighborhoods to construct $\psi_t$ to push the spine into $T_t\cap U$. However, this is not possible if, for example, we have a cusp neighborhood of a complete cusp touching $\vec{s}_t$.
\end{remark}

\begin{proposition} \label{prop:spunSpine2}
Let $M$, $T_t$ and $\overline{r}(S_t)$ as above and $\vec{s}_t$ be the incenter of $T_t$. Let $\gamma$ be a (non-core curve) primitive, unoriented closed geodesic in $M$ of real length $\leq \mu$. Then there is a tetrahedron $T_t$ and an $m\in \Gamma$ representing $\gamma$ with
$$d(\vec{s}_t, m(T_t \cap U)) \leq \cosh^{-1}\left(\cosh\left(\overline{r}(S_t)\right)\cosh\left(\mu\right)\right).$$
\end{proposition}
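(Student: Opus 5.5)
The plan is to combine Proposition~\ref{prop:spunSpine} with the argument of Proposition~\ref{prop:spineAndTilingRadius}, and then pass to the limit $\varepsilon\to 0$. Fix $\varepsilon>0$ and let $S^\varepsilon=\cup S^\varepsilon_t$ be the deformed spine from Proposition~\ref{prop:spunSpine}. Each $S^\varepsilon_t\subset T_t$ has radius at most $\overline{r}(S_t)$ about $\vec{s}_t$ and lies in the $\varepsilon$-neighborhood of $U$. Since $\gamma$ is not a core curve, the first part of the proof of Proposition~\ref{prop:spineAndTilingRadius} applies verbatim to $S^\varepsilon$. If $\gamma$ missed $S^\varepsilon$, it would lie in a component of $M\setminus S^\varepsilon$, which is a product of a torus and an interval. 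Hence $\gamma$ would be peripheral, and so it would be contractible or a core curve in $M_\text{filled}$, contradicting the hypothesis. Therefore $\gamma$ meets $S^\varepsilon$ at some point $s$ in some $S^\varepsilon_{t}$.

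Next, lift $s$ into $T_t\subset P$ and run the triangle argument from Proposition~\ref{prop:spineAndTilingRadius}. We get $m\in\Gamma$ representing $\gamma$ with
\[
d(\vec{s}_t, ms)\leq \cosh^{-1}\left(\cosh\left(\overline{r}(S_t)\right)\cosh(\mu)\right)=:R,
\]
using $d(\vec{s}_t,s)\le\overline{r}(S_t)$, $d(s,ms)=\RePart(\lambda)\le\mu$, and the angle at $s$ being at most $\pi/2$. Since $s$ lies in $T_t$ within distance $\varepsilon$ of $U$, the point $ms$ lies within $\varepsilon$ of $m(T_t\cap U)$. To justify this, we use that the closest point of $U$ to $s$ can be taken in $T_t$: by the construction of $\psi_t$, the point $s$ lies on a vertical segment in $T_t\cap N^k_t$ ending at $\partial N^k_t\cap T_t$ at distance less than $\varepsilon$. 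Hence $d(\vec{s}_t, m(T_t\cap U))\le R+\varepsilon$.

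To remove the $\varepsilon$, we use finiteness. There are finitely many tetrahedra, and only finitely many $m\in\Gamma$ satisfy $d(\vec{s}_t,m(T_t\cap U))\le R+1$, because $T_t\cap U$ is compact (it is truncated) and $\Gamma$ acts properly discontinuously. Choose a sequence $\varepsilon_n\to 0$. Some pair $(t,m)$ occurs for infinitely many $n$, and every such $m$ represents $\gamma$. Since $m(T_t\cap U)$ is closed, the distance is attained, and passing to the limit gives $d(\vec{s}_t,m(T_t\cap U))\le R$.

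The main obstacle is the step placing $ms$ within $\varepsilon$ of $m(T_t\cap U)$ specifically, as opposed to merely within $\varepsilon$ of $U$. I would handle it by reading off from the explicit form of $\pi_N$ that the deformed spine point lies on the vertical segment inside $T_t$, which is at most $\varepsilon$ above the truncation face $\partial N^k_t\cap T_t$. A secondary concern is making sure the peripherality argument in Proposition~\ref{prop:spineAndTilingRadius} remains valid for the non-geodesic spine. This should hold because that argument uses only the topological spine property, which Proposition~\ref{prop:spunSpine} guarantees.
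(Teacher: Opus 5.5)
Your proof is correct. It differs from the paper's in where the limit $\varepsilon\to 0$ is taken.

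\textbf{The paper's route.} The paper passes to the limit on the level of points. For each $n$ it picks $x_n\in\pi^{-1}(\gamma)\cap S^{\varepsilon_n}_t$, so that $x_n\in T_t\cap\overline{B}_{\overline{r}(S_t)}(\vec{s}_t)$ and $x_n$ is $\varepsilon_n$-close to $U$. It fixes one of the finitely many $t$ that occurs infinitely often and extracts a subsequence converging to some $s$. Closedness then gives $s\in T_t\cap U\cap\pi^{-1}(\gamma)$ with $d(\vec{s}_t,s)\le\overline{r}(S_t)$. The triangle argument of Proposition~\ref{prop:spineAndTilingRadius} is run once, at $s$, so $ms\in m(T_t\cap U)$ holds exactly.

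\textbf{Your route.} You run the triangle argument for each $\varepsilon$ and take the limit on the level of deck transformations. Compactness of $T_t\cap U$ and proper discontinuity of $\Gamma$ let you pigeonhole a single pair $(t,m)$.

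\textbf{What each costs.} Your ordering is exactly what creates the step you flag as the main obstacle. You need $ms$ to be $\varepsilon$-close to $m(T_t\cap U)$, not merely to $U$. That forces you to check that the vertical foot point of $\psi_t(x)$ on $\partial N^k_t$ lies in $T_t$. This is true by the standard-form assumption on the enclosing horoneighborhoods; it is the same fact the paper uses to see that $\psi_t$ maps $T_t$ into $T_t$. In the paper's ordering this refinement is unnecessary. There, $x_n\in T_t$ and $d(x_n,U)<\varepsilon_n$ already force the limit point into the closed set $T_t\cap U$. In return, your route never extracts a convergent subsequence of points on $\pi^{-1}(\gamma)$. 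It replaces that with a finiteness statement about the candidate elements $m$, namely those with $m(T_t\cap U)$ within distance $\cosh^{-1}(\cosh(\overline{r}(S_t))\cosh(\mu))+1$ of $\vec{s}_t$.
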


\begin{proof}
By Proposition~\ref{prop:spunSpine}, for any $\varepsilon>0$, there is a point $x\in\pi^{-1}(\gamma)\subset\H^3$ and a tetrahedron $T_t$ such that  $x$ is in the $\varepsilon$-neighborhood of $T_t\cap U$ and $x\in S_t\subset T_t$, thus $x\in \overline{B}_{\overline{r}(S_t)}\left(\vec{s}_t\right)$. Pick such an $x$ for each $\varepsilon$ in a sequence converging to $0$. Since each of the finitely many $T_t\cap \pi^{-1}(\gamma)$ is compact, there is a tetrahedron $T_t$ and a subsequence of $x$'s converging to a point $s\in T_t\cap \pi^{-1}(\gamma)$. We also have $s\in U$ and $x\in \overline{B}_{\overline{r}(S_t)}\left(\vec{s}_t\right)$. Let $m\in\Gamma$ be a representative of $\gamma$ fixing a line containing $s$. We can use the same argument as in the proof of Proposition~\ref{prop:spineAndTilingRadius}. 
\end{proof}

\section{Avoiding core curves when tiling about a point} \label{sec:tilingIncompleteAboutPoint}

This section addresses the problem that any bounded neighborhood of a point on a lifted core curve intersects infinitely many lifted tetrahedra $mT_t$. We can reduce this to a finite problem by considering instead all lifted truncated tetrahedra $mT_t\cap U$ intersection such a neighborhood where $U$ is as computed in Section~\ref{sec:generalizedCuspCross}. That is, we modify the tiling algorithm to use truncated tetrahedra $mT_t\cap U$ to tile $U$. We are allowed to modify the tiling algorithm this way since $U$ has the swiss cheese property.

Note that we need to compute an upper bound for the distance of a truncated lifted tetrahedron $mT_t\cap U$ to the point $x$ we tile about. We do this by computing the radius of a truncated tetrahedron $T_t\cap U$. Note that this radius is used for tiling and is distinct from the spine radius.







\subsection{Radius of a truncated tetrahedron} \label{sec:radiusTrunc}

\begin{proposition} \label{prop:radiusTruncTet}
Let $T_t$, $C_i$ and $U$ be as in Section~\ref{sec:spineIncomplete}. Let $\vec{s}_t$ be the incenter of $T_t$ and $\vec{w}^{kl}_t$ as in Section~\ref{sec:spunSpine}. When truncated by the generalized neighborhoods $C_i$, the radius of a tetrahedron $T_t$ about $\vec{s}_t$ is given by
\begin{equation*}
r(T_t\cap U)=\max_{k,l}\left\{d\left(\vec{s}_t, \vec{w}^{kl}_t\right)\right\}.
\end{equation*}
\end{proposition}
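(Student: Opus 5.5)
The plan is to prove two inequalities. The lower bound is immediate: each $\vec{w}^{kl}_t$ lies on the edge $kl$ of $T_t$ and on $\partial N^k_t$, and hence in the closure of $T_t\cap U$. So the radius is at least $\max_{k,l} d(\vec{s}_t,\vec{w}^{kl}_t)$.

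For the upper bound, I would first decompose the region. The truncated tetrahedron $T_t\cap U$ is $T_t$ minus the four (standard-form, pairwise disjoint) pieces $T_t\cap N^k_t$. Its closure is compact, and its boundary consists of two kinds of pieces: the truncated faces $T^f_t\cap U$, and the truncation surfaces $T_t\cap\partial N^k_t$. The distance function $d(\vec{s}_t,\cdot)$ has no interior maxima. Indeed, $\cosh d(\vec{s}_t,x) = -\vec{s}_t\cdot x$ is the restriction of a linear function to the hyperboloid, and it is convex along geodesics. So the maximum is attained on the boundary, and I will treat the two kinds of boundary pieces separately.

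On a truncation surface $T_t\cap\partial N^k_t$, I would work in the upper half-space model with $\vec{v}^k_t=\infty$. If the cusp is complete, the surface is a horotriangle in a horosphere $\{t=c\}$. Its vertices are exactly $\vec{w}^{kl}_t$, $l\neq k$. Balls are horoconvex, so the horoconvex closure of those three points lies in $\overline{B}_R(\vec{s}_t)$ with $R=\max d(\vec{s}_t,\vec{w}^{kl}_t)$. Within a horosphere, horocyclic segments are the Euclidean segments, so that closure contains the flat horotriangle. If the cusp is incomplete, the surface is a piece of the Euclidean cone $\partial N^k_t$. Here the intended argument is that the sphere $\partial B_R(\vec{s}_t)$ meets each vertical line once from above. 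The cone piece lies below the plane/horotriangle through the $\vec{w}^{kl}_t$, or else I would compare with the horoball $B(\vec{v}^k_t)$ as in the proof of Proposition~\ref{prop:spunSpine}, where points are moved vertically toward the horoconvex closure of the $\vec{w}^{kl}_t$. I expect this cone case to be the main obstacle: I need that every point of $T_t\cap\partial N^k_t$ lies in the horoconvex closure of its three corner points together with points already known to be in the ball. The natural attempt is again a vertical-projection argument, using that the horoconvex closure of the $\vec{w}^{kl}_t$ contains a topological triangle with the same vertical projection as $T_t$, together with the fact that balls around $\vec{s}_t$ (which lies below the truncation) are vertically convex.

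On a truncated face $T^f_t\cap U$, I would restrict to the totally geodesic plane $\H^2$ containing it. This region is an ideal triangle cut off by horocycle arcs or cone traces near its vertices. Its corners are the $\vec{w}^{kl}_t$ with $k,l\neq f$. The plan is to show it is contained in the horoconvex (hence ball-convex) closure of these six corners together with the truncation arcs already handled. The straight sides are geodesic segments between corners, so convexity of balls handles them. By convexity of $d(\vec{s}_t,\cdot)$, the interior of the face again yields no maxima beyond its boundary.

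Combining the two cases gives $T_t\cap U\subset\overline{B}_R(\vec{s}_t)$, and hence the stated equality.
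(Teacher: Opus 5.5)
Most of your reduction is sound. The lower bound, the absence of interior maxima, the reduction of each truncated face to its six sides (three geodesic segments plus three truncation arcs), and the horotriangle argument for a complete cusp all work. The gap is exactly the case you flag as the main obstacle: the truncation surface $T_t\cap\partial N^k_t$ of an incomplete cusp, and with it the truncation arcs on the faces. The ingredients you name cannot close it.

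Put $\vec{v}^k_t=\infty$ and the core curve over $0$. Then this piece is the graph of $z\mapsto |z|/s$ over the cusp triangle $\Delta$ with vertices $q_l$. Its corners $\vec{w}^{kl}_t$ sit at heights $|q_l|/s$, but the piece descends to height $h/s$, where $h=\min_{z\in\Delta}|z|$. Whenever the point of $\Delta$ closest to $0$ is not a vertex (e.g.\ vertices $1\pm i$ and $3$), we have $h<\min_l|q_l|$. The piece then dips strictly below all three of its corners, which is exactly the situation in which the paper needs the points $\vec{v}^{kl}_t$.

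The closed horoball $\{t\geq\min_l|q_l|/s\}$ is horoconvex and contains the three corners. Hence their horoconvex closure, and every topological triangle inside it, stays above that height. So it can supply only the upper half of a vertical sandwich. (There is also no horotriangle through the $\vec{w}^{kl}_t$, since they sit at different heights.) Vertical convexity of $\overline{B}_R(\vec{s}_t)$ needs a point of the ball below each point of the piece on the same vertical line. Neither $\vec{s}_t$ nor the corners of the piece provide one.

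The missing ingredient is a lower reference surface known to lie in the ball, namely the points $\vec{v}^{kl}_t$ at height $h/s$.
\begin{itemize}
\item The enclosing horoneighborhoods $B(\vec{v}^k_t)$ and $B(\vec{v}^l_t)$ are disjoint, and $N^l_t\cap T_t\subset B(\vec{v}^l_t)$. Hence $\vec{v}^{kl}_t$ lies on the truncated edge between $\vec{w}^{kl}_t$ and $\vec{w}^{lk}_t$, and so in $\overline{B}_R(\vec{s}_t)$ by convexity along that geodesic.
\item The flat horotriangle these points span at height $h/s$ is in the ball by horoconvexity, and it lies below the piece.
\item The Euclidean triangle through the $\vec{w}^{kl}_t$ lies above the piece, because $|z|$ is convex. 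It is in the ball, because hyperbolic balls are Euclidean balls in this model.
\item The vertical geodesic segments between these two triangles contain the piece.
\end{itemize}

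The paper uses the same extra points but argues locally rather than by containment. It rules out local maxima stratum by stratum. First, the distance has no local maximum in the interior of an equidistant cylinder. Second, take an interior point $b$ of a hyperbola arc $H$, let $p$ be the point of the face plane closest to $\vec{s}_t$, and let $T$ be the common tangent of $H$ and the level circle $C$ about $p$. Then either $H$ lies outside $C$ near $b$, or one of $\vec{v}^{kl}_t$, $\vec{v}^{km}_t$ lies on or below $T$, hence outside $C$ and farther from $p$ than $b$.
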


\begin{proof}
Since the truncated tetrahedron $T_t\cap U$ is compact, the distance function $F\from T_t\cap U\to\R^{\geq 0}, x\mapsto d(\vec{s}_t,x)$ has a maximum. It suffices to show that this maximum is realized by a vertex $\vec{w}^{kl}_t$ of $T_t\cap U$. Or, equivalently, that no other point in $T_t\cap U$ can be a maximum of $F$.\\
The interior of $T_t\cap U$ cannot contain a local maximum of $F$. Topologically, the boundary $\partial (T_t\cap U)$ splits as $T_t\cap \partial U$ consisting of four triangles $\Delta_v$ (corresponding to the vertices $v=0,\dots,3$ of $T_t$) and $\partial T_t\cap U$ consisting of four hexagons $\hexagon_f=T^f_t\cap U$ (corresponding to the faces $f=0,\dots,3$ of $T_t$). Each $\Delta_v$ is supported by a horosphere or cylinder (depending on whether the vertex $v$ of $T_t$ corresponds to a complete or incomplete cusp) and each $\hexagon_f$ is supported by a hyperbolic plane. Therefore, the interior of any $\Delta_v$ or $\hexagon_f$ cannot contain a local maximum of $F$. Furthermore, an edge shared by two hexagons is supported by a hyperbolic line. Therefore, the interior of such an edge cannot contain a local maximum of $F$, either. Next, consider an edge shared by a $\Delta_v$ and a $\hexagon_f$. If vertex $v$ of $T_t$ corresponds to a complete cusp, the edge is supported by a horocycle. 
\begin{figure}[ht]
\begin{center}
\begingroup%
  \makeatletter%
  \providecommand\color[2][]{%
    \errmessage{(Inkscape) Color is used for the text in Inkscape, but the package 'color.sty' is not loaded}%
    \renewcommand\color[2][]{}%
  }%
  \providecommand\transparent[1]{%
    \errmessage{(Inkscape) Transparency is used (non-zero) for the text in Inkscape, but the package 'transparent.sty' is not loaded}%
    \renewcommand\transparent[1]{}%
  }%
  \providecommand\rotatebox[2]{#2}%
  \newcommand*\fsize{\dimexpr\f@size pt\relax}%
  \newcommand*\lineheight[1]{\fontsize{\fsize}{#1\fsize}\selectfont}%
  \ifx\svgwidth\undefined%
    \setlength{\unitlength}{258.4624307bp}%
    \ifx\svgscale\undefined%
      \relax%
    \else%
      \setlength{\unitlength}{\unitlength * \real{\svgscale}}%
    \fi%
  \else%
    \setlength{\unitlength}{\svgwidth}%
  \fi%
  \global\let\svgwidth\undefined%
  \global\let\svgscale\undefined%
  \makeatother%
  \begin{picture}(1,0.67771947)%
    \lineheight{1}%
    \setlength\tabcolsep{0pt}%
    \put(0,0){\includegraphics[width=\unitlength,page=1]{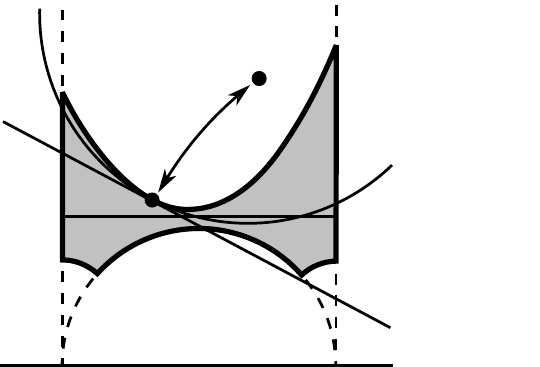}}%
    \put(0.4405486,0.55729174){\makebox(0,0)[lt]{\lineheight{1.25}\smash{\begin{tabular}[t]{l}$p$\end{tabular}}}}%
    \put(0.25786156,0.33361724){\makebox(0,0)[lt]{\lineheight{1.25}\smash{\begin{tabular}[t]{l}$b$\end{tabular}}}}%
    \put(0.02058883,0.24934768){\makebox(0,0)[lt]{\lineheight{1.25}\smash{\begin{tabular}[t]{l}$\vec{v}^{kl}_t$\end{tabular}}}}%
    \put(0.63470834,0.24934768){\makebox(0,0)[lt]{\lineheight{1.25}\smash{\begin{tabular}[t]{l}$\vec{v}^{km}_t$\end{tabular}}}}%
    \put(0.43496942,0.38384409){\makebox(0,0)[lt]{\lineheight{1.25}\smash{\begin{tabular}[t]{l}$H$\end{tabular}}}}%
    \put(0.73505972,0.37990499){\makebox(0,0)[lt]{\lineheight{1.25}\smash{\begin{tabular}[t]{l}$C$\end{tabular}}}}%
    \put(0.72892634,0.05438424){\makebox(0,0)[lt]{\lineheight{1.25}\smash{\begin{tabular}[t]{l}$T$\end{tabular}}}}%
  \end{picture}%
\endgroup%

\end{center}
\caption{Using the upper-halfplane model, a shaded hexagon $\hexagon_f$ in a face of a truncated tetrahedron $T_t\cap U$.\label{fig:tetRad}}
\end{figure}
Therefore, the interior of such an edge cannot contain a local maximum of $F$. Otherwise, assume that the maximum of $F$ is realized by an interior point of the edge, say $b$. We identify the plane supporting $\hexagon_f$ with the upper-halfplane model in such a way that vertex $k\ne f$ of $T_t$ is at $\infty$; see Figure~\ref{fig:tetRad} (also see in Figure~\ref{fig:enclosingNbhd}). Let $p$ be the point in this plane closest to $\vec{s}_t$. Consider the two points $\vec{v}_t^{kl}$ and $\vec{v}_t^{km}$ where the boundary $\partial B\left(\vec{v}_t^k\right)$ of the corresponding enclosing horoneighborhood intersects the straight edges of $\hexagon_f$. $\vec{v}_t^{kl}$ and $\vec{v}_t^{km}$ have the same Euclidean height. Note that the top edge of $\hexagon_f$ is a Euclidean hyperbola $H$. The points having the same distance to $p$ as $b$ form a Euclidean circle $C$. $H$ and $C$ have the same Euclidean tangent line $T$ at $b$. We distinguish the cases where $T$ is above or below $C$. In the former case, $b$ is not a maximum of $F$. In the latter case, at least one of $\vec{v}_t^{kl}$ and $\vec{v}_t^{km}$ is below $T$ and thus outside $C$ and further away from $p$ as $b$. Thus, $b$ is not a maximum of $F$.
\end{proof}

\subsection{Modified tiling algorithm} \label{sec:modTiling}

Let $T_t$ and $U$ be as in Section~\ref{sec:spineIncomplete}. Let us fix a tetrahedron a $T_t$ and use its incenter $\vec{s}_t$ as base point $x$. In this section, we modify \cite[Algorithm~7.3.1]{goerner:tiling} to tile the space $U\subset\H^3$ with lifted truncated tetrahedra $m'(T_{t'}\cap U)$ about $K=x\in U$. That is, the output stream $(r_0, m_0T_{t_0}), (r_1, m_1T_{t_1}),\dots$ fulfills that $m_0(T_{t_0}\cap U),\dots, m_{i-1}(T_{t_{i-1}}\cap U)$ cover $\overline{B}_{r_i}(x)\cap U$.

The modifications to \cite[Algorithm~7.3.1]{goerner:tiling} are:
\begin{itemize}
\item Drop $f_\text{entry}$ and change the for-loop in $f$ to run for $f=0,\dots,3$ without checking $f=f_\text{entry}$.
\item Change the computation of $r'$ to be a lower bound for $d(x, m'(T_{t'}\cap U))$. Note that the algorithm never evaluates this for the truncated lifted tetrahedron $m'(T_{t'}\cap U)$ containing $x$.
\end{itemize}

We can do these modifications since $U$ has the swiss cheese property, see Section~\ref{sec:swissCheese}. Thus, for any $r\geq 0$, any tile $m'(T_{t'}\cap U)$ intersecting the ball $\overline{B}_r(x)\cap U$ in $U$ can be obtained from $\Id (T_t\cap U)=(T_t\cap U)\ni x=\vec{s}_t$ by applying a sequence of face-pairing matrices such that all intermediate tiles also intersect the ball $\overline{B}_r(x)\cap U$. We also refer the reader to \cite[Sections~1.2, 1.3 and~7]{goerner:tiling}.

We compute the lower bound $r'$ for $d(x, m'(T_{t'}\cap U))= d(m'^{-1}x, (T_{t'}\cap U))$ as
\begin{eqnarray*}
r'&=&\max\Big\{ d\big(m'^{-1}x, T_{t'}\big), ~~ d\big(m'^{-1}x, \overline{B}_{r\left(T_{t'}\cap U\right)}(\vec{s}_{t'})\big)\Big\}\\
&=&\max\Big\{\min\left\{d\left(m'^{-1}x, T^0_{t'}\right), \dots, d\left(m'^{-1}x, T^3_{t'}\right)\right\}, ~~ d\left(m'^{-1}x, \vec{s}_{t'}\right) - r(T_{t'}\cap U)\Big\}
\end{eqnarray*}
where $r(T_t\cap U)$ is given by Proposition~\ref{prop:radiusTruncTet} and where we assume that $x\not\in m'(T_{t'}\cap U)$. We can make this assumption since the tiling algorithm does not use this bound for the initial tile.


\subsection{Pre-length spectrum}

Using the modified tiling algorithm, we proceed as in Section~\ref{sec:prelengthSpectrumStream} replacing $r(S_t)$ with the upper bound $\overline{r}(S_t)$ from Section~\ref{sec:spunSpine}. By Proposition~\ref{prop:spunSpine2}, this results in a pre-length spectrum stream.

\section{Adding core curves} \label{sec:addCoreCurves}

The goal of this section is to compute the following information for the core curves in $M_\text{filled}$:
\begin{itemize}
\item For each core curve $\gamma$ corresponding to an incomplete cusp $i$, the complex length $\lambda$ and either a representative matrix $m\in\Gamma$ or a word $w$ in the face-pairing representation associated with $P$ depending on what output is desired. We use $\lambda$ and $m$ or $w$ in Step~\ref{step:addCoreCurves} of Algorithm~\ref{algo:lenSpec}.
\item For each vertex $\vec{v}^v_t\in\posLightCone$ of each tetrahedron $T_t$ corresponding to an incomplete cusp, the lift $c^v_t$ of the respective core curve that ends at the vertex of $\vec{v}$; also see Figure~\ref{fig:enclosingNbhd} and Section~\ref{sec:perTetCoreCurve}. We use $c^v_t$ to detect and avoid core curves and their multiples in the subsequent sections. Note that for this purpose, we do not need to determine an orientation of $c^v_t$ compatible with the core curve.
\end{itemize}

\subsection{Core curves from filling curves} \label{sec:coreFromFilling}

We call the peripheral curve in the cusp used for the Dehn filling to obtain $M_\text{filled}$ the filling curve.

For every cusp, a SnapPy triangulation $\myTrig$ of $M$ includes two primitive peripheral curves called meridian and longitude spanning the cusp's homology. These peripheral curves are encoded as chains in the dual $1$-skeleton of the cusp triangulation.

We focus on one incomplete cusp $i$ of $M$. A SnapPy triangulation gives the corresponding filling curve as linear combination $(p,q)$ of meridian and longitude. We obtain a peripheral curve homotopic to the core curve in $M_\text{filled}$ corresponding to the incomplete cusp $i$ as linear combination $(r,s)$ by solving for $ps-qr=1$.

\subsection{Core curve orientations}

The shapes of geometric spun-triangulations depends on the choice of core curve orientations. This is because the shapes do not just depend on a representation $\rho\from\pi_1(M)\to\myPSL(2,\C)$ but also a choice of $B$-decoration as defined in \cite[Definition~4.3]{gtz:vol} where $B$ are the diagonal matrices. A $B$-decoration can be thought of as a consistent way of picking one of the two Eigenvalues $d$ and $d^{-1}$ of $\rho(\gamma)$ for any peripheral curve $\gamma$ of $M$. For the geometric representation, orientations of the core curves uniquely determine a $B$-decoration by picking the Eigenvalue $d$ with $|d|>1$ for each oriented core curve.

\begin{example}
When filling the only cusp of the census manifold \texttt{m004}, we obtain a closed manifold such as \texttt{m004(6,1)}. Note that \texttt{m004(6,1)} and \texttt{m004(-6,-1)} give the same closed hyperbolic manifold $M_\text{filled}$ with the same geometric representation $\rho\from\pi_1(M_\text{filled})\to\myPSL(2,\C)$ up to conjugacy. However, the two spun-triangulations \texttt{m004(6,1)} and \texttt{m004(-6,-1)} differ in the orientation of the filling and core curve. Both spun-triangulations \texttt{m004(6,1)} and \texttt{m004(-6,-1)} actually admit a geometric structure but with different shapes.
\end{example}

\begin{example} \label{example:spunOneWay}
Here, we give the triangulation by its isomorphism signature \cite{burton:encode} which is also understood by SnapPy \cite{SnapPy}.
The spun-triangulation
\texttt{gLLMQcbeefefpjaqupw(1,1)} admits a geometric structure. Reversing the core curve, one of the shapes for the resulting \texttt{gLLMQcbeefefpjaqupw(-1,-1)} has negative imaginary part.  
 Note that the resulting closed manifold is homeomorphic to the closed census manifold \texttt{m247(-1,3)} and to the 3-fold cover of \texttt{m007(3,1)} which has no known geometric spun-triangulation.
\end{example}

\subsection{Lifted core curves}

Choose a point $b$ in the cusp triangulation where the chains for the meridian and longitude intersect. There is a unique path (up to homotopy) connecting $b$ the base point $x$ in the fundamental polyhedron $P$. Thus, the choice of $b$ gives us elements in $\pi_1(M,x)$ for the meridian and longitude. Let $g$ and $h$ be corresponding words in the face-pairing representation of $\pi_1(M,x)$ associated with $P$. We refer the reader to \cite[\texttt{fundamental\_group.c}]{SnapPy} for further details.

Let $\rho\from\pi_1(M,x)\to\SO(1,3)$ the geometric representation associated with $P$. The word $w$ is then given by $g^rh^s$ and the matrix $m$ by $\rho(w)=\rho(g)^r\rho(h)^s$. We can use the techniques in Section~\ref{sec:geoUtilities} to compute the complex translation length $\lambda$ and the fixed points of $m$. The fixed points span $c^v_t$ for the vertex $v$ of the tetrahedron $T_t$ corresponding to the cusp triangle containing $b$. We can recursively apply the face-pairing matrices to $c^v_t$ to compute $c^v_t$ for the other vertices of other tetrahedra corresponding to the same cusp.

\subsection{Large Dehn filling coefficient support} \label{sec:largeFillCoef}

This section describes future work not implemented in SnapPy version~3.3.

Note that computing the matrix $m$ as $\rho(w)=\rho(g)^r\rho(h)^s$ can be prohibitively expensive and is not even needed when an application is asking for the word $w$ instead:
\begin{example} \label{ex:largeDehn}
Consider the manifold \texttt{m004(10000001,1000002)}. For a peripheral curve parallel to the core curve, the smallest coefficients $(r,s)$ are $(1578947, 157895)$ and the corresponding word is $$w=b^{1578947} \left(c^{-1}acb^{-1}a^{-1}b\right)^{157895}.$$
\end{example}

Thus, we want to avoid computing the matrix $m$. Similarly, when computing the word $w$ as above, we want to avoid expanding the powers to a flat string of generators. Instead, we proceed as follows.

To compute $\lambda$, recall that a peripheral curve yields a monomial in the cross ratios $z_t, 1/(1-z_t), 1-1/z_t$. Let $L$ and $M$ the monomial for the meridian and longitude, respectively. Note that these are also the monomials making up the respective cusp gluing equation. Namely, the equation is $L^pM^q=1$. We can now give $\lambda$ as $r\log(L)+s\log(M)$. Note that this relies on picking the orientations of the manifold, of the meridian and longitude and, thus, of the filling and core curve in a consistent way.

To compute (the unoriented) $c^v_t$ for the cusp triangle containing $b$, we can use that at least one of $\rho(g)$ or $\rho(h)$ has non-zero real translation length and that the corresponding fixed points coincide with those of $\rho(w)$ (when ignoring whether a fixed point is attractive or repulsive). We obtain the other $c^v_t$ by applying the face-pairing matrices as above.

\section{Detecting core curves} \label{sec:coreCurveDetection}

We need to detect whether a given $h\in\Gamma^\text{hyp}$ corresponds to (a multiple of) a core curve in Step~\ref{algStep:rejectCoreCurve} of the inner loop of Algorithm~\ref{algo:lenSpec}.

We proceed similar to \cite[Section~5.2]{goerner:tiling}: compute the line $K$ fixed by $h$, pick a generic point $x\in K$ and use graph-tracing and graph-tracing verification algorithms of (\cite[Algorithm~6.1.2 and~6.1.3]{goerner:tiling}) to find one or two lifted tetrahedra containing $x$. However, we modify the graph tracing algorithm of \cite[Algorithm~6.1.2]{goerner:tiling} to detect core curves as follows:
\begin{itemize}
\item The output is an algebraic sum type: a lifted tetrahedron or an (incomplete) cusp.
\item At the beginning of the loop, we check that $mc^v_t$ coincides with $xx'$ where $x'=hx$. If we can verify this, we return the corresponding cusp. Otherwise, we continue.
\end{itemize}
If the output of this modified graph-trace algorithm is a lifted tetrahedron, we can use \cite[Algorithm~6.1.3]{goerner:tiling} to verify that $x$ is in the interior of $M$ and, thus, that $h$ does not correspond to a core curve. If the output is a cusp, then $h$ corresponds to a multiple of its core curve.\\
We integrate this modified graph-trace algorithm into Algorithm~\ref{algo:lenSpec} as follows: if the modified graph-trace algorithm returns an (incomplete) cusp, we simply skip the remained of the inner loop. Otherwise, we continue with Step~\ref{algStep:rejectDuplicate}. This step is described in Section~\ref{sec:graphTraceSet} and can re-use the result of the graph-tracing algorithm.

A priori, checking the condition that $x$ and $x'$ are on $mc^v_t$ requires exact arithmetic. However, we can use that any loop in an embedded tube about a core curve has to be homotopic to a multiple of the core curve. Thus, it is sufficient to verify that the line segment $xx'$ is contained in $mN^v_t$ to prove that $xx'$ coincides $mc^v_t$. That is, it is sufficient to verify that $d\left(x, mc^v_t\right)<r$ and $d\left(x', mc^v_t\right)<r$ where $r$ is the radius of the tube $N^v_t$. We can compute the left-hand side of these inequalities using the equation for $d(x, x_0x_1)$ in \cite[Section~3.2]{goerner:tiling}. We can compute $r=\sinh^{-1}\/s$ from the scaling factor $s$ in Section~\ref{sec:scalingNeighborhoodsIncomplete}.

\section{Avoiding core curves when tiling about a geodesic } \label{sec:coreCurveAvoidance}

While we rejected $m\in\Gamma^\text{hyp}$ corresponding to core curves in the previous section, the line $K$ fixed by $m\in\Gamma^\text{hyp}$ can still intersect a core curve. This causes a problem when we look-up $m$ in the set $\setOfGeodesics$ in Section~\ref{sec:graphTraceSet}: we need to find all lifted tetrahedra in $\left\{m^j:j\in\Z\right\}\backslash\H^3$ that intersect $K$. There are infinitely many such tetrahedra, if $K$ intersects a core curve; also see Remark~\ref{rem:infTilingWhenInterCore}.

We fix this by instead looking instead at the lifted tetrahedra that intersect $K\cap U$. The number of these lifted tetrahedra is finite (and non-zero). It is also at least one since every point in $U$ is covered by some lifted tetrahedron and $K$ cannot be completely contained in $\H^3 \setminus U$ since the corresponding loop in $M_\text{filled}$ is neither a peripheral or core curve. We might include additional lifted tetrahedra for which we could not verify that they do not intersect $K\cap U$.

Recall that Section~\ref{sec:graphTraceSet} is using \cite[Algorithm~7.3.1]{goerner:tiling} to tile about $K$. We need to modify that tiling algorithm to skip the lifted tetrahedra whose intersection $I$ with $K$ is entirely outside of $U$. If we skip such a lifted tetrahedron, we continue tiling at the two points in $K\cap\partial U$ closest to $I$. That is, we continue tiling with the lifted tetrahedra containing either one of these two points. This is necessary for the following reasons: the initial lifted tetrahedron might have the intersection $I$ entirely outside of $U$; the closed geodesic $\gamma\subset M_\text{filled}$ corresponding to $K$ might be cut into multiple connected components when removing the generalized cusp neighborhoods. Note that it is ok to revisit an earlier lifted tetrahedron since the algorithm uses a set to de-duplicate them. 





We implement this by adding the following step just before ``Emit $(r,mT_t)$.'' in \cite[Algorithm~7.3.1]{goerner:tiling} with $m$ referring to the variable bound inside the algorithm and not the above $m$:
\begin{algorithmSteps}
\item For each vertex $v$ of $mT_t$ corresponding to an incomplete cusp:
\begin{algorithmSteps}
\item If the intersection $I=K\cap mT_t$ is (verified to be) contained in the core curve tube $mN^v_t$:
\begin{algorithmSteps}
\item For each point $x\in K\cap \partial mN^v_t$:
\begin{algorithmSteps}
\item For each lifted tetrahedron $m'T_{t'}$ from the graph-trace algorithm \cite[Algorithm~6.1.2 and~6.1.3]{goerner:tiling} applied to $x$.
\begin{algorithmSteps}
\item Add $(-\infty, m'T_{t'}, -1)$ to $Q$.
\end{algorithmSteps}
\end{algorithmSteps}
\item Continue to the next iteration.
\end{algorithmSteps}
\end{algorithmSteps}
\end{algorithmSteps}

We now describe these steps in more detail.

$I$ is the intersection of $K$ with the four half-spaces of $\H^3$ defined by the normals $m\vec{n}_t^f$ of the faces of $mT_t$. We parametrize $K=x_0x_1$ by $\gamma(\tau)=\big((1-\tau)x_0+\tau x_1\big)\postNorm$ and solve for the times $\tau$ when $K$ enters or exits a half-space. If $K$ does not (or cannot be verified to) intersect a face transversally, we ignore the corresponding time (which we can do because it is safe to return a larger $I$). We can then compute The minimum $\tau_0$ and maximum $\tau_1$ of all entry and exit times, respectively. $I$ is now given as the line segment $\gamma(\tau_0)\gamma(\tau_1)$.

Similar to Section~\ref{sec:coreCurveDetection}, we have $I\subset mN^v_t$ if and only if $d\left(\gamma(\tau_0),mc^v_t\right)<r$ and $d\left(\gamma(\tau_1), mc^v_t\right)<r$ where $r$ again is the radius of $N^v_t$ again.

Solving $d\left(\gamma(\tau),mc^v_t\right)=r$ for $\tau$, the two points $x\in K\cap \partial mN^v_t$ are given by $\gamma(\tau)$.

As already pointed out in \cite[Remark~7.3.3]{goerner:tiling}, some of these computations might be faster in the tetrahedra view.

This finishes all the technical details needed to generalize the length spectrum algorithm to geometric spun-triangulations.

\section{Optimal Margulis number} \label{sec:margulis}

Let $M=\Gamma\backslash\H^3$ be an orientable, finite-volume hyperbolic $3$-manifold $M$. The $\varepsilon$-thin part of $M$ is the union of all essential loops of length less than $\varepsilon$ (or, equivalently, consists of all points with injectivity radius less than $\varepsilon/2$). We call $\varepsilon$ a \emphasisText{Margulis number} for $M$ if the $\varepsilon$-thin part is a union of embedded and disjoint tubes about simple closed geodesics and cusp neighborhoods. We already discussed how to determine whether $\varepsilon$ is a Margulis number for $M$ in \cite[Section~1.1.11]{goerner:tiling}. Here, we give an algorithm to compute (an interval containing) the \emphasisText{optimal Margulis number} $\mu(M)$ which is the supremum of all Margulis numbers $\varepsilon$ for $M$.

\subsection{Auxilary functions} \label{sec:margulisAux}

We use the same notation as in \cite[Sections~5.1 and~7.2]{goerner:tiling}. Let $K$ be a geodesic or horoball in $\H^3$ corresponding to a closed geodesic or cusp neighborhood in $M$. Let $\Gamma_K=\{m\in\Gamma: mK=K\}$ the stabilizer of $K$.

We denote by $B_r(K)$ the open neighborhood about $K$ of size $r$. If $K$ is a geodesic, $B_r(K)$ is empty when $r\leq 0$. If $K$ is a horoball, we use the signed distance so that $B_r(K)$ is a smaller horoball when $r<0$.

Given a real number $\varepsilon$, we denote by $r(K,\varepsilon)$ the radius $r$ such that $B_r(K)$ is the $\varepsilon$-thin part of $\Gamma_K\backslash\H^3$. If $K$ corresponds to a geodesic of real length less than $\varepsilon$ in $M$, we set $r(K,\varepsilon)=-\infty$. If $K$ is a horoball, $r(K,\varepsilon)$ can be negative.

Given a real number $r$, we denote by $\varepsilon(K, r)$ the supremal $\varepsilon$ such that $B_r(K)$ is $\varepsilon$-thin in $\Gamma_K\backslash\H^3$. If $K$ corresponds to a geodesic of length $\lambda$ in $M$, we set $\varepsilon(K,r)=\RePart(\lambda)$ when $r\leq 0$. In other words, $r(K,\_)$ and $\varepsilon(K,\_)$ are almost inverses of each other in that $\varepsilon(K, r)$ is the supremal $\varepsilon$ with $r(K,\varepsilon) \leq r$.

Let $K'$ be defined analogously to $K$. Given a real number $r$, we denote by $\mu(K, K', r)$ the supremal $\varepsilon$ such that $r(K,\varepsilon)+r(K',\varepsilon)\leq r$.

We discuss how to compute $r(K,\varepsilon)$, $\varepsilon(K, r)$ and $\mu(K, K', r)$ in Section~\ref{sec:margulisImpl}.

If $K$ and $K'$ correspond to the same cusp, we require $K$ and $K'$ to be the same cusp neighborhood in $M$. If $K$ and $K'$ are the same in $M$, let $d_M(K, K')$ be embedding diameter (which is twice the embedding radius) of $K$ (and $K'$) in $M$. Otherwise, $d_M(K, K')$ is the distance between $K$ and $K'$ in $M$; also see \cite[Section~10.1]{goerner:tiling}. Then, $\mu(K,K',d_M(K,K'))$ is the supremal $\varepsilon$ such the $\varepsilon$-thin neighborhoods of $K$ and $K'$ are disjoint (if $K$ and $K'$ are different in $M$) or embedded (otherwise) in $M$.

The optimal Margulis number $\mu(M)$ is the minimum of $\mu(K,K',d_M(K,K'))$ across all pairs $K$ and $K'$ where $K$ and $K'$ is each as above. Note that $\mu(K,K',d_M(K,K'))$ only depends on what cusp or geodesic $K$ and $K'$ correspond to in $M$. Furthermore, $\mu(K,K',d_M(K,K'))\geq \RePart(\lambda)$ if $K$ or $K'$ corresponds to a geodesic of length $\lambda$.

\subsection{Idea}

\subsubsection{As continuous growth process}

As a continuous process, we find the optimal Margulis number $\mu(M)$ by growing $\varepsilon$-thin neighborhoods about a system $L$ of cusps and geodesics for as long as they are embedded and disjoint. The system $L$ starts with just the cusps of $M$. When $\varepsilon$ is the real length of one or several geodesics, we need to add these geodesics to $L$.

\subsubsection{Using tiling steps}

We emulate continually growing $\varepsilon$ by running an instance of the tiling algorithm for each object $K_j$ representing a cusp or geodesic in such a system $L$. For now, assume we use exact arithmetic. For each $K_j$, we can convert the tiling radius $r$ to $\varepsilon=\varepsilon(K_j, r)$ such that the tiles cover an $\varepsilon$-thin neighborhood of $K_j$. The smallest $\varepsilon$ among all $K_j$ takes the role of the $\varepsilon$ in the continuous process. We grow it by either:
\begin{itemize}
\item adding one more tile for the cusp or geodesic $K_j$ corresponding to the smallest $\varepsilon$,
 or
\item adding the next geodesic $(\lambda_i, w_i)$ from the length spectrum stream (see Section~\ref{sec:putAlgo}) to $L$.
\end{itemize}
To decide which one to do, we check whether $\varepsilon < \RePart(\lambda_i)$ for each $K_j$ of $L$. If this is true, we add one more tile. Otherwise, we add the next geodesic.

We can use \cite[Proposition~10.1.1]{goerner:tiling} to determine whether the tiled $\varepsilon$-thin neighborhoods are embedded and disjoint. If they are, the smallest $\varepsilon$ is a Margulis number and thus a lower bound for $\mu(M)$. Otherwise, we might have overshot and the smallest $\varepsilon$ might be larger than $\mu(M)$ since this is not a continuous process.

At this point, we look at the tiled neighborhoods which are not embedded or disjoint. Let $K_j$ and $K_{j'}$ be a pair of such neighborhoods. That is, either $j\ne j'$ and the closed $\varepsilon$-thin neighborhoods of $K_j$ and $K_{j'}$ are not disjoint or $j=j'$ and the closed $\varepsilon$-thin neighborhood of $K_j=K_{j'}$ is not embedded. For each such pair, \cite[Proposition~10.1.1]{goerner:tiling} gives $d_M(K_j, K_{j'})$ which we can use to compute $\mu(K_j, K_{j'}, d_M(K_j, K_{j'}))$. The optimal Margulis number $\mu(M)$ is now given as minimum $\mu$ of all these $\mu(K_j, K_{j'}, d_M(K_j, K_{j'})$).

\subsubsection{Implementation using interval arithmetic}

We now modify the procedure to work with intervals. If we can verify that $\varepsilon < \RePart(\lambda_i)$ for each $K_j$ of $L$, we add one more tile, namely for the $K_j$ with the smallest $\underline{\varepsilon}$. Otherwise, we add the next geodesic. Note that even if $\varepsilon < \RePart(\lambda_i)$ is true for each $K_j$ of $L$ but cannot be verified, we can still add the next geodesic without compromising correctness.

At some point, we obtain tiled $\varepsilon$-thin neighborhoods for which we cannot verify that they are embedded and disjoint. \cite[Proposition~10.1.1]{goerner:tiling} gives us an interval $d$ for $d_M(K_j, K_{j'})$ for such pairs $(K_j, K_{j'})$. We compute $\mu$ as the minimum of $\mu(K_j, K_{j'}, d)$ across all such pairs $(K_j, K_{j'})$. Note that we might have multiple such pairs of $K_j$ and $K_{j'}$ because the values of the $\varepsilon$'s where their $\varepsilon$-thin neighborhoods are colliding are equal or so close together that we obtain overlapping intervals.

Note that while the $\mu$ computed this way is always an upper bound for $\mu(M)$ it might be larger than $\mu(M)$. That is, when computing $\mu$ as a minimum, we might actually miss pairs $(K_j,K_{j'})$ with the corresponding $\mu(M)$-thin neighborhoods touching in $M$. To fix this, we continue growing the neighborhoods and updating $\mu$ as we find new pairs $(K_j,K_{j'})$ for which we cannot verify that their $\varepsilon$-thin neighborhoods are embedded or disjoint. We stop when we can verify that $\mu$ is smaller than the $\varepsilon$ of any tiled $\varepsilon$-thin neighborhood.

To see that this suffices, note that when we stop, every $\varepsilon$ is greater than $\mu$ and thus $\mu(M)$. Thus, for each pair of cusps or geodesics with $\mu(M)$-thin neighborhoods touching in $M$, the corresponding tiled neighborhoods are intersecting and thus the corresponding $\mu(K_j, K_{j'}, d_M(K_j, K_{j'}))$ is included in the computation of $\mu$.

\subsection{Algorithm}

Algorithm~\ref{algo:optimalMargulis} implements the above idea.

\begin{algorithm}[h]
\begin{center}
\begin{tabular}{rp{14.9cm}}
{\bf Input:} & Developed fundamental polyhedron $P$ for $M\subset M_\text{filled}$.\\
& Iterator $(\lambda, w)$ for length spectrum stream.\\
{\bf Pre-condition:} & Boundary of interior of $\mu(M)$-thin part avoids core curves in $M_\text{filled}$.\\
{\bf Output:} & Optimal Margulis number $\mu=\mu(M_\text{filled})$.\\
{\bf Note:} & Uses notation $d_{i_j,i_{j'}}(K_j, K_{j'})$, $r\big[K_j\big]_{i_j}$ and $r\big[K_{j'}\big]_{i_{j'}}$ from \cite[Section~10.1]{goerner:tiling} which uses the tiling algorithm \cite[Algorithm~7.3.1]{goerner:tiling} and \cite[Algorithm~6.1.2 and~6.1.3]{goerner:tiling}) to seed the tiling algorithm.\\
\multicolumn{1}{l}{\bf Algorithm:}\\
\end{tabular}
\begin{algorithmSteps}
\item[L] List $K_0,\dots,K_{s-1}$ of standard geometric objects as defined in \cite[Definition~5.1.1]{goerner:tiling}. Starts empty. We instantiate the tiling algorithm whenever adding a $K_j$ to $L$. We denote the number of tiles pulled from the  stream for $K_j$ by $i_j$.
\item[S] Set of unordered pairs of indices  into $L$.
\item $\mu\leftarrow\infty$ [also see Remark~\ref{rem:margulisStopperFlag}]
\label{algoStep:margulisInit}
\item For each complete cusp:
\begin{algorithmSteps}
\item Add to $L$ a horoball $K$ corresponding to a cusp neighborhood of the cusp.
\end{algorithmSteps}
\item Repeat:
\begin{algorithmSteps}
\item Let $j$ be the index of the smallest $\underline{\varepsilon(K_0,r\big[K_0\big]_{i_0})}, \dots, \underline{\varepsilon(K_{s-1},r\big[K_{s-1}\big]_{i_{s-1}})}$.
\item If $\underline{\RePart(\lambda)}\leq \underline{\varepsilon(K_j,r\big[K_j\big]_{i_j})}$:
\begin{algorithmSteps}
\item Add to $L$ the line $K$ in $\H^3$ fixed by the matrix corresponding to word $w$.
\item Advance iterator for length spectrum stream.
\item Continue to next iteration.
\end{algorithmSteps}
\item If $\mu<\varepsilon(K_j,r\big[K_j\big]_{i_j})$: Return $\mu$.
\item Pull one more tile from the stream for $K_j$ (incrementing $i_j)$.
\item For each $j' = 0,\dots,s-1$:
\begin{algorithmSteps}
\item If $\left\{j, j'\right\}$ is not in $S$: 
\begin{algorithmSteps}
\item $d_\text{max} \leftarrow d_{i_j, i_{j'}}(K_j,K_{j'})$.
\item $R \leftarrow r\big[K_j\big]_{i_j} + r\big[K_{j'}\big]_{i_{j'}}$.
\item If not $R < d_\text{max}$:
\begin{algorithmSteps}
\item $d\leftarrow \left[\underline{\min(R, d_\text{max})}, \overline{R}\right]$
\item $\mu\leftarrow \min\left(\mu, \mu\big(K_j,K_{j'}, d \big)\right)$
\item Add $\left\{j,j'\right\}$ to $S$.
\end{algorithmSteps}
\end{algorithmSteps}
\end{algorithmSteps}
\end{algorithmSteps}
\end{algorithmSteps}
\end{center}
\vspace{-0.5cm}
\caption{Optimal Margulis number.\label{algo:optimalMargulis}}
\end{algorithm}

The implementation allows for geometric spun-triangulations as well. However, recall from \cite[Section~7.2]{goerner:tiling} that we cannot tile beyond a core curve. Thus, the algorithm fails if the pre-condition about the core curves is not fulfilled (by running into an infinite loop or eventually a failed verification depending on whether we use exact or interval arithmetic). Examples where the algorithm fails are given in Section~\ref{sec:proofcensusMargulisConstant}.

\begin{remark}
Note that we can avoid this problem by switching to finite triangulations.
\end{remark}

\begin{remark} \label{rem:margulisStopperFlag}
Even if the pre-condition fails, we can still verify for small enough $\varepsilon$ that $\varepsilon$ is a Margulis number for $M$. More generally, we can give a stopper value $\mu_\text{stopper}$ to the algorithm and initialize $\mu\leftarrow\mu_\text{stopper}$ in Step~\ref{algoStep:margulisInit} of Algorithm~\ref{algo:optimalMargulis}. The algorithm now returns a lower bound for $\mu(M)$ rather than $\mu(M)$. More precisely, it returns (an interval for) $\min (\mu_\text{stopper}, \mu(M))$. Examples are in Section~\ref{sec:proofcensusMargulisConstant}.
\end{remark}

\subsection{Implementation details} \label{sec:margulisImpl}

\subsubsection{$r(K,\varepsilon)$} \label{sec:implR}

Let $K$ correspond to a closed geodesic of complex length $\lambda$. Then, \cite[Proposition~3.10 and Remark~3.11]{fps:margulis} give us
\[
r(K,\varepsilon)=\max\limits_{n=1,\dots,N}\left\{\cosh^{-1}\sqrt{\max\left(1,\frac{\cosh(\varepsilon)-\cos(n\ImPart(\lambda))}{\cosh(n\RePart(\lambda))-\cos(n\ImPart(\lambda))}\right)}\right\}\quad\text{for any}\quad N\geq \lfloor \varepsilon / \RePart(\lambda)\rfloor.
\]
We can use $N=\left\lfloor \overline{\varepsilon / \RePart(\lambda)}\right\rfloor$ for verified computations.

Let $K$ correspond to a cusped neighborhood $C$ of area $A$ and shape $s\in\C$ (which is such that the torus $\C/(\Z+s\Z)$ is similar to $\partial C$). We have (also see \cite[Section~1.1.12]{goerner:tiling})
\[
r(K,\varepsilon)= \log\left( \frac{2 \sinh(\varepsilon/2)}{L} \right)\quad\text{with}\quad L=\frac{\sqrt{A/\ImPart(s)}}{\min_{(p,q)\in\Z^2\setminus 0} |p + qs|}.
\]
To see this, note that $2\sinh(\varepsilon/2)$ is the Euclidean distance measured along a horosphere $H$ between two points on $H$ with hyperbolic distance $\varepsilon$ and that $L$ is the (Euclidean) length of a shortest non-contractible curve in $\partial C$.

\subsubsection{$\varepsilon(K,r)$} \label{sec:implEps}

Let $K$ correspond to a closed geodesic of complex length $\lambda$. Solving one of the terms contributing to $r(K,\varepsilon)$ for $\varepsilon$, we get
\[
\EpsilonCandidate(r,\sigma,\tau)=\cosh^{-1}\left(\cosh(r)^2 (\cosh(\sigma)-\cos(\tau)) + \cos(\tau)\right)
\]
and, thus, have
\[
\varepsilon(K,r)=\min\limits_{n=1,2,\dots} \EpsilonCandidate(r,n\RePart(\lambda),n\ImPart(\lambda)).
\]
To evaluate only finitely many terms, we use that $E(r,n\RePart(\lambda),0)$ is a lower bound for all terms following $E(r,n\RePart(\lambda),n\ImPart(\lambda))$.

If $K$ is a cusp neighborhood, it is straight-forward to invert the function $r(K,\_)$.

\subsubsection{$\mu(K,K',r)$} \label{sec:implMu}

If $K$ and $K'$ are the same, we use $\mu(K,K',r)=\varepsilon(K,r/2)$. Otherwise, we need to solve for $f(\varepsilon)=r(K,\varepsilon)+r(K',\varepsilon)=r$.

If $K$ and $K$ are cusp neighborhoods, $f(\varepsilon)$ is $2\log(2\sinh(\varepsilon/2))+C$ up to a constant $C$ and, thus, can be solved explicitly.

In the remaining cases, at least one of $K$ and $K'$ is a geodesic and it is not easy to solve $f(\varepsilon)=r$ for $\varepsilon$ explicitly. Note that $f$ has a single discontinuity at $\varepsilon_0$ where $\varepsilon_0$ is (the maximum of) the real length of the geodesic(s). When approaching the discontinuity from the right, $f$ approaches $r_0=f(\varepsilon_0)$ but its derivative becomes infinite. Outside the discontinuity, $f$ is only piece-wise smooth. To compute an interval for $r_0$, we use $r(K,\varepsilon_0)+r(K',\varepsilon_0)$ where we use the continuous versions of $r(K,\varepsilon)$ which is $0$ when $K$ is a geodesic and $\varepsilon$ is smaller than its real length.

If we can verify that $r < r_0$, we know that $\mu(K,K',r)=\varepsilon_0$.

If we can verify that $r > r_0$, we use Newton's method to solve for $f(\varepsilon)=r$. We guard against the singularity by clamping each iteration to get at most 8 times closer to $r_0$. We store the absolute value $|\delta|$ of the error term $\delta=f(\varepsilon)-r$ in either $\delta_+$ or $\delta_-$ depending on the sign of $\delta$. We stop iterating if $|\delta|$ is not less than the previous value of the respective $\delta_+$ or $\delta_-$. To verify the result, we can use the Newton interval method for $f$ even though it has points where the derivative when approaching from the left differs from the derivative when approaching from the right. This requires us to implement the interval version of the derivative $f'(\varepsilon)=r'(K,\varepsilon)+r'(K',\varepsilon)$ of $f$ with respect to $\varepsilon$. We focus on the first term $r'(K,\varepsilon)$. If $K$ is a horoball, we can simply evaluate the expression for $r'(K,\varepsilon)$. If $K$ is a geodesic, recall from Section~\ref{sec:implR} that $r(K, \varepsilon)$ is the maximum of $h_1(\varepsilon),\dots,h_N(\varepsilon)$ where each $h_n$ is a function. Given an interval $\varepsilon$, consider each $h_n(\varepsilon)$ overlapping the interval for $r(K,\varepsilon)$. An interval for $r'(K,\varepsilon)$ is given by the union of all the respective $h'_n(\varepsilon)$. As usual in interval arithmetic, the union of intervals is the smallest interval containing all intervals.

If we can verify neither $r < r_0$ nor $r>r_0$, we use that $\varepsilon_0$ is a lower bound for $\mu(K,K',r)$ and that $\varepsilon(K,r)$ and $\varepsilon(K',r)$ are upper bounds for $\mu(K,K',r)$.

\subsubsection{Other}

Similar to \cite[Section~7.3]{goerner:tiling}, we use a priority queue to find the index $j$ of the smallest $\underline{\varepsilon(K_j,r\big[K_j\big]_{i_j})}$.

Rather than recomputing $d_{i_j, i_{j'}}(K_j,K_{j'})$ from scratch every time, we store its value and accumulate it. That is, update it by taking the minimum of its old value and the distances between tiles not considered yet. 

\subsection{Optimal Margulis number for SnapPy census} \label{sec:proofcensusMargulisConstant}


\begin{proof}[Proof of Theorem~\ref{thm:censusMargulisConstant}]
We use the same geometric spun-triangulations as in Example~\ref{example:censusTimings}. We apply SnapPy's \texttt{Manifold.margulis} which implements Algorithm~\ref{algo:optimalMargulis} to either compute the optimal Margulis number $\mu(M)$ of a manifold $M$ or, following Remark~\ref{rem:margulisStopperFlag}, to verify $\mu(M)>0.8$ as follows:
\begin{verbatim}
    RIF = RealIntervalField(500)
    m = M.margulis(verified=True, bits_prec=500, stopper=RIF("0.81"))
    if not m > RIF("0.8"):
        raise Exception("0.8 is not a Margulis number")
\end{verbatim}

The latter method of verifying $\mu(M)>0.8$ is useful to save computation time. In particular, we choose it for all triangulation with 11 tetrahedra in the orientable cusped census.

The method of verifying $\mu(M)>0.8$ is essential to address cases where the algorithm cannot compute $\mu(M)$ itself. This happens for geometric spun-triangulations where a core curve intersects a $\varepsilon$-thin neighborhood about a non-core curve geodesic (or unfilled cusp) for $\varepsilon=\mu(M)$. The following five geometric spun-triangulations in \cite[\texttt{ClosedManifolds.zip}]{hikmot} of the following five closed census manifolds exhibit such a core curve. Luckily, in each of these cases, we are able to still verify $\mu(M)>0.8$:
\begin{center}
\begin{tabular}{lllll}
\texttt{Closed135.tri}& \texttt{Closed369.tri}&\texttt{Closed739.tri}& \texttt{Closed1280.tri}& \texttt{Closed1401.tri} \\
\texttt{m120(-4,1)}& \texttt{s479(-3,1)}& \texttt{s572(1,2)}& \texttt{s912(0,1)}& \texttt{v2203(-1,2)}
\end{tabular}
\end{center}

For some of the triangulations, it is necessary to increase the precision beyond 500bits.

Note that \texttt{m007(3,1)} has no known geometric spun-triangulation.  
\end{proof}

\begin{remark} \label{rem:margulisVol3}
The following unverified computation indicates that $\muCensus{m007(3,1)}=0.831442\dots$ which is also the systole of \texttt{m007(3,1)}: Since the manifold has no cusps, $\muCensus{m007(3,1)}$ is at least the systole. We need to show that two shortest geodesics in \texttt{m007(3,1)} intersect. To do this, we work in the 3-fold cyclic cover that has the geometric spun-triangulation \texttt{gLLMQcbeefefpjaqupw(1,1)}; also see Example~\ref{example:spunOneWay}. It has exactly two primitive geodesics of complex length $2.49432\dots + 0.446593\dots i$. Each of these two geodesics must cover a primitive geodesic in \texttt{m007(3,1)} three times since \texttt{m007(3,1)} has no primitive geodesic of such complex length (which we determined using the unverified \texttt{Manifold("m007(3,1)")\allowbreak.length\_spectrum(2.5)}). Furthermore, these two geodesics intersect in the inside view:
\begin{verbatim}
   >>> M = Manifold("gLLMQcbeefefpjaqupw(1,1)")
   >>> M.inside_view(geodesics=['aagFEdEdfGFC','aBDcfgFEc'])
\end{verbatim}
Thus, the corresponding geodesics in \texttt{m007(3,1)} must also intersect in \texttt{m007(3,1)}. Both of them and have real length equal to the systole of \texttt{m007(3,1)}.
\end{remark}

\section{Ruling out Cosmetic Surgeries in the cusped census }\label{sec:cosmetic}

We now prove of Theorem~\ref{thm:cosmetic}:

\begin{proof}[Proof of Theorem~\ref{thm:cosmetic}]
We extend the proof of \cite[Theorem 4.3]{FPS:Cosmetic} by considering the orientable cusped census manifolds with 10 and 11 tetrahedra. The new census manifolds require some modifications to the code originally used for \cite[Theorem 4.3]{FPS:Cosmetic}. The modified code is again available at \cite{Schleimer:CosmeticCode}. Besides the cases already listed in \cite[Theorem 4.3]{FPS:Cosmetic}, there are now two new pairs $(M(s), M(t))$ of oriented manifolds that the code cannot distinguish. In both cases, the description given by \cite{Schleimer:CosmeticCode} is the same for $M(s)$ and $M(t)$:
\begin{center}
\renewcommand{\arraystretch}{1.18}%
\begin{tabular}{|c|c|c|c|}
\hline
$M$ & $s$ & $t$ & Description\\ \hline \hline
\texttt{o10\_148196} & $(1,-1)$ & $(1,1)$ & \verb"SFS [D: (2,1) (2,-1)] U/? m011" \\ \hline
\texttt{o11\_451858} & $(1,-1)$ & $(1,1)$ & \verb"SFS [D: (2,1) (2,-1)] U/? m016" \\ \hline
\end{tabular}
\end{center}
Note that each $M(s)$ and $M(t)$ is obtained by gluing a Seifert fiber space $X$ and a hyperbolic census manifold $Y$ along a torus, making $X$ and $Y$ the pieces of their JSJ decompositions. Since $X$ and $Y$ are different, an automorphism of $M(s)$ or $M(t)$ has to fix $X$ and $Y$. Furthermore, such an automorphism has to preserve the orientation since $Y$ is chiral which we verify using SnapPy. Thus, for each of the above pairs $(M(s), M(t))$, all isomorphisms between $M(s)$ and $M(t)$ have to be either orientation preserving or reversing. To show that the latter is the case, we use the function \verb"admit_reversing_homeo" from \cite{Schleimer:CosmeticCode}. Thus, $M(s)$ and $M(t)$ are different as oriented manifolds.
\end{proof}

\section{Performance} \label{sec:performance}

Note that, unlike the Hodgson--Weeks algorithm (see Remark~\ref{rem:hwDirichletIsolation}), the new algorithm is fairly sensitive to the choice of triangulation. Thus, before measuring the performance, we discuss how to heuristically find a suitable triangulation to accelerate the new algorithm in Section~\ref{sec:perfTrigDep}.

For a performance analysis, it is important to measure the actual performance using a test set representing a wide variety of manifolds. This applies even more so when comparing the Hodgson--Weeks and the new algorithm since there is no good predictor for the performance common to both algorithms (the spine radius of the Dirichlet domain is a good predictor for the Hdgson-Weeks algorithm but not the new algorithm). We use the census manifolds as such a test set and revisit the Example~\ref{example:censusTimings} in Section~\ref{sec:perfCensus}. The more nuanced view in that section reveals that while the new algorithm performs much better on average, there are also cases where it performs worse than the Hodgson--Weeks algorithm.

The most extreme of these cases are obtained from gluing regular ideal tetrahedra or octahedra. Thus, Section~\ref{sec:perfTetMfds} continues the performance analysis of the new algorithm by focusing on manifolds obtained from regular ideal tetrahedra. For the resulting triangulations, an asymptotic analysis of the runtime of the new algorithm shows that the runtime is linear in the number of tetrahedra $n$ and exponential in the cut-off length $\mu$. Example~\ref{ex:estimateNew} shows that resulting expression is not just asymptotically correct but also a good predictor for the runtime of practical examples. This makes the new algorithm scale better than the Hodgson--Weeks algorithm for the family of triangulations in Example~\ref{ex:cyclicCoversPerf} but makes it perform worse for the ones in Example~\ref{ex:regTessPerf}.

When comparing the runtimes of the new algorithm and the older Hodgson--Weeks algorithm, one should also keep in mind that the implementations of the two algorithms use different languages and numeric types. This puts the new algorithm at a disadvantage by a factor that could be up to a magnitude as discussed in Section~\ref{sec:implPerf}.

\subsection{Performance dependence on spun-triangulation} \label{sec:perfTrigDep}

The performance of the new algorithm is fairly sensitive to the choice of spun-triangulation of a manifold $N$. That is, it is sensitive to both the choice of surgery description $M_\text{filled}\cong N$ of $N$ and the choice of geometric spun-triangulation of the unfilled manifold $M$.

\begin{remark} \label{rem:hwDirichletIsolation}
In contrast, the performance of the Hodgson--Weeks algorithm is mostly isolated from the choice of triangulation. This is because it uses a Dirichlet domain about a base point $x$ that locally maximizes the injectivity radius and the bulk of the runtime is in the steps subsequent to finding the (unverified) Dirichlet domain. This particularly applies to cases where the spine radius of the Dirichlet domain is large and the length spectrum computation costly such as in Example~\ref{example:fast}.
\end{remark}

Heuristically, we can improve the performance of the new algorithm by:
\begin{itemize}
\item Turning short geodesics into core curves. That is, drilling a short geodesic first and then looking at the  incomplete geometric structure that is completed by the geodesic we just drilled. The acceleration can be explained by the potential reduction in the number of tetrahedra and the large tubes about the core curves which we avoid when tiling. See Example~\ref{example:fast}. Section~\ref{sec:codeSurgeryDescription} gives a function to apply this heuristic strategy.
\item Turning long core curves into geodesics. That is, permanently filling the incomplete cusps. See Example~\ref{example:m004Trigs}. For a closed manifold, this ultimately results in a finite triangulation when we fill the last incomplete cusp. Since SnapPy does not support geometric finite triangulations, we expect but cannot confirm that we can accelerate the new length spectrum algorithm by switching to a finite triangulation for closed manifolds with long systole. We refer the reader to Section~\ref{sec:futureFinTrig} for future work.
\item Simplifying the triangulation. See Example~\ref{ex:simplifyM004}.
\end{itemize}

\begin{example} \label{example:m004Trigs}
Computing the systole for the following spun-triangulation takes 2.5s:
\begin{verbatim}   >>> M = Manifold("m129(-3,1)(0,0)") # isometric to m004
   >>> M.cusp_info()[0]['core_length'] # Somewhat long core curve
   1.08707... + 1.722768...*I
   >>> M.length_spectrum_alt(count=1)
   [Length                           Core curve  Word
    1.08707... - 1.722768...*I       -           abcDB,
    1.08707... + 1.722768...*I       Cusp 0      ac]\end{verbatim} %
If we fill the incomplete cusp, we get the same triangulation of \texttt{m004} that is in the census. Computing the systole now takes 50ms:
 \begin{verbatim}
    >>> M=M.filled_triangulation().with_hyperbolic_structure()
    >>> M.num_tetrahedra() # combinatorially isomorphic to m004
    2
    >>> M.length_spectrum_alt(count=1)
    [Length                           Core curve  Word
     1.08707... - 1.722768...*I       -           aB,
     1.08707... + 1.722768...*I       -           c]\end{verbatim} %
 \end{example} %
 
\begin{example} \label{ex:simplifyM004}
Here is a geometric ideal triangulation of \texttt{m004} with 4 tetrahedra. Computing the systole takes 670ms:
\begin{verbatim}
   >>> M = Manifold("eLPkbcdddmbkgw") # isometric to m004
   >>> M.num_tetrahedra()
   4
   >>> M.length_spectrum_alt(count=1)
   [Length                                      Core curve  Word
    1.08707... - 1.722768...*I       -           abac,
    1.08707... + 1.722768...*I       -           aab]
\end{verbatim}
Simplifying again gives the triangulation of \texttt{m004} that is also in the census. Computing the systole now only takes 50ms:
\begin{verbatim}
    >>> M.simplify()
    >>> M.num_tetrahedra() # combinatorially isomorphic to m004
    2
\end{verbatim}    
\begin{verbatim}
    >>> M.length_spectrum_alt(count=1)
   [Length                                      Core curve  Word
    1.08707... + 1.722768...*I       -           b,
    1.08707... - 1.722768...*I       -           aC]    
\end{verbatim}
\end{example}

\subsubsection{Heuristic code to prepare spun-triangulation for cusped manifolds} \label{sec:codeSurgeryDescription}

To accelerate the computations in Example~\ref{example:censusTimings} and Section~\ref{sec:proofcensusMargulisConstant} for the cusped case, we use the geometric spun-triangulation obtained by applying the following function to the triangulation in the census\footnote{Requires upcoming SnapPy version~3.4 since some census triangulations did not admit a hyperbolic structure.}:
\begin{verbatim}
    def surgery_description(M):
        G = M.fundamental_group(simplify_presentation=False)
        gens = [ (l, g) for g in G.generators()
                        if (l := G.complex_length(g).real()) > 1e-6 ]
        if len(gens) == 0: return M
        l, g = min(gens)
        if l > 0.4: return M
        # verified = True not actually needed: we fill the new 
        # cusp right away so it does not matter what curve we
        # actually drill.
        try: N = M.drill_word(g, bits_prec=1000)
        except: return M
        N.dehn_fill((1,0),-1)
        for i in range(1000):
            if N.solution_type() == 'all tetrahedra positively oriented':
                return N
            N.randomize()
        return M
\end{verbatim}

\begin{remark}
Note that the above code only drills one short geodesic. Users might choose to extend it to drill more than one short geodesic to accelerate some cases even further.
\end{remark}

\begin{example} 
The remark applies, for example, to \texttt{o9\_05486}: We can further accelerate the length spectrum computation by turning the geodesics corresponding to the words \texttt{a} and \texttt{f} into core curves. Similar for the unique double cover of \texttt{o9\_00637} and the words \texttt{a} and \texttt{n}. All these geodesics have length less than 0.1.
\end{example}

\subsection{Performance for census manifolds} \label{sec:perfCensus}

We now revisit Example~\ref{example:censusTimings} and show some more details about the performance of the algorithms for the orientable cusped census manifolds up to 9 tetrahedra:
\begin{center}
\begin{tabular}{r||r||r||r|r}
Census & Tetrahedra & Case & Hodgson--Weeks & New algorithm \\
& & & 212bits & 500bits \\
& & & C++ & Python \\  \hline \hline
\multirow{3}{*}{Cusped up} & \multirow{3}{*}{$\leq 9$} & Average & 16.1s & 251ms \\ \cline{3-5}
 & & Median & 46ms & 161ms \\  \cline{3-5}
 & &  Worst & 18h 26m & 2.72s
\end{tabular}
\end{center}

\begin{remark}
Note that Hodgson--Weeks algorithm requires a cut-off length. Thus, when measuring the time to compute the systole, we first ran the new algorithm so that we can use the result (plus some small amount) as cut-off length for the Hodgson--Weeks algorithm.
\end{remark}

For the Hodgson--Weeks algorithm, the worst case is realized by the manifold \texttt{o9\_00636} which also has the largest spine radius of 3.717\dots~. More generally, the performance of the Hodgson--Weeks algorithm for the census manifolds is largely determined by the spine radius of the Dirichlet domain. For example, all manifolds taking longer than an hour also have a spine radius above 3.38 and all manifolds taking less than an hour have a spine radius below 3.45.

Note that unlike in the average and worst case, the implementation of the new algorithm actually performs worse in the median case. This means that there are many cases where the implementation of the new algorithm is actually slower than implementation of the Hodgson--Weeks algorithm.

For the new algorithm, the worst case is realized by the manifold \texttt{t12067}. It is homeomorphic to the complement of the Borromean rings and can be obtained by gluing together two octahedra such that many symmetries of the octahedra extend to the manifold. This is an example of a more general pattern we revisit in Example~\ref{ex:regTess}: for highly symmetric manifolds, the Hodgson--Weeks algorithm performs very well and better than the new algorithm. 

\subsection{Tetrahedral hyperbolic manifolds} \label{sec:perfTetMfds}

In this section we focus on geometric triangulations $\myTrig$ where all tetrahedra are ideal and regular.

This allows the following asymptotic analysis: Let $n$ be the number of tetrahedra of $\myTrig$. Let $\mu>0$ be the cut-off length up to which we compute the length spectrum.  Recall that Algorithm~\ref{algo:preLen} invokes the tiling algorithm for each tetrahedron. Note that the spine radius for each tetrahedron computed in Section~\ref{sec:spineIncomplete} is bounded by a constant when choosing the horocusp neighborhoods as in Section~\ref{sec:scalingNeighborhoodsIncomplete} (to see this, consider the regular ideal tetrahedron, take the ``large'' horoballs about the vertices that are touching the incenter and take the ``small'' horoballs about the vertices that are each touching three of the large horoballs; since each cusp neighborhood is stopped by the incenter, it leaves the other cusp neighborhoods enough room to expand so that each cusp neighborhood corresponds to a horoball between the small and large horoballs). Thus, for large $\mu$, each invocation needs to tile a ball of radius approximately $\mu$. The volume of this ball and, thus, the number of tiles to cover it grow as $e^{2\mu}$. Hence, we expect the total number of tiles $T$ used to compute the length spectrum to grow as $n e^{2\mu}$. For an asymptotic analysis of the runtime of the algorithm, we need some additional factors such as the depth of the trees managing these tiles. But these factors are minor. Note that the analysis does not take into account the necessity to increase precision as $n$ or $\mu$ grows.

The following example shows that $n e^{2\mu}$ is a good estimate for the runtime of the new algorithm even for small values of $n$ and $\mu$:

\newpage

\begin{example} \label{ex:estimateNew}
We consider cyclic covers of \texttt{m004}. Given the degree $d$, there is a unique such cover which has $n=2d$ regular ideal tetrahedra and one cusp. We compute its length spectrum up to a given cut-off length $\mu$ using the new algorithm:
\begin{verbatim}
    >>> M=Manifold("m004").covers(d, cover_type='cyclic')[0]
    >>> M.length_spectrum_alt(max_len=mu, bits_prec=1000)
\end{verbatim}
We compare the above estimate to the actual runtime for different values of $d$ and $\mu$ in Figure~\ref{fig:runtimeAsymp}.
\begin{figure}[h]
\begin{center}
\includegraphics[width=10cm]{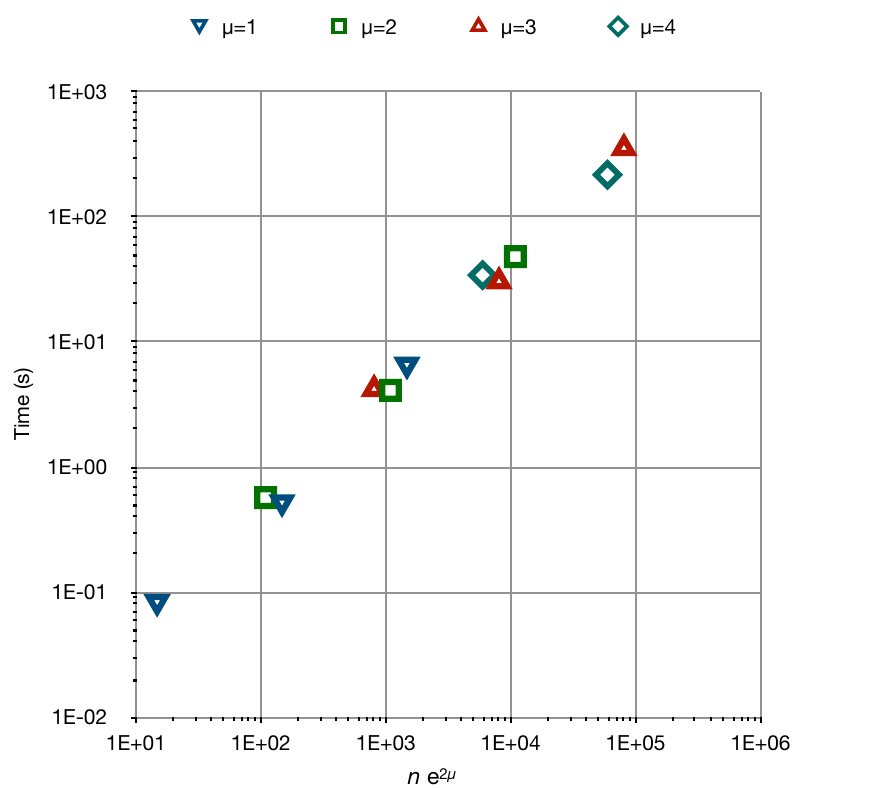}
\end{center}
\caption{Estimated vs actual runtime to compute the length spectrum with the new algorithm. The cut-off length $\mu$ is $1$, $2$, $3$ or $4$ and the manifold is the cyclic cover of \texttt{m004} with degree $d=1$, $10$ or $100$ (without the case $d=100, \mu=4$).\label{fig:runtimeAsymp}}
\end{figure}
\end{example}

\begin{example} \label{ex:cyclicCoversPerf}
We again consider the cyclic covers of \texttt{m004}, but now compare the performance of the new algorithm to that of the Hodgson--Weeks algorithm. For this, we fix the cut-off length $\mu$ to 1.087070\dots which is the systole of each cover. The runtimes are shown in Figure~\ref{fig:m004CoverTimes}. Note that while the new algorithm scales linearly in $d$ as predicted, the runtime of the Hodgson--Weeks algorithm grows exponentially.
\begin{figure}[h]
\begin{center}
\includegraphics[width=8cm]{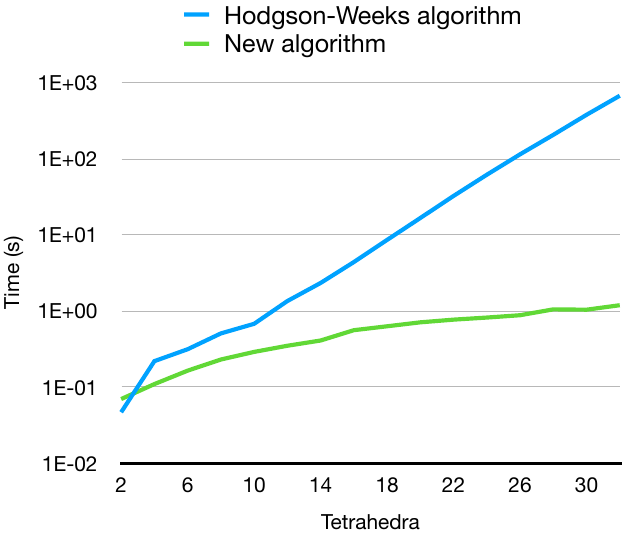}
\end{center}
\caption{Time to compute the systole of the cyclic cover of \texttt{m004} using the Hodgson--Weeks algorithm and the new algorithm. \label{fig:m004CoverTimes}}
\end{figure}
\end{example}

\newpage

\begin{example} \label{ex:regTessPerf}
\label{ex:regTess}
We expect the Hodgson--Weeks algorithm to perform better when the Dirichlet domain is fairly balanced such as in the case of the highly symmetric regular tessellation manifolds (see \cite{BGR:PrinCong} and \cite{Goerner:RegTessLinkComps} for notation and further background). Indeed, computing the systole is a lot faster with the Hodgson--Weeks algorithm than with the new algorithm in these cases:
\begin{center}
{
\renewcommand{\arraystretch}{1.3}
\begin{tabular}{ll||rl|c||r|r}
\multicolumn{2}{c||}{Manifold} & \multicolumn{2}{c|}{Solids} & Systole & Hodgson--Weeks & New algorithm \\
 \cite{Goerner:RegTessLinkComps} & \cite{BGR:PrinCong} & & & & 212bits & 1000bits \\ \hline \hline
$\mathcal{U}^{\{3,3,6\}}_{2}$ & $\big(3, \big\langle 2\big\rangle\big)$ & 10 & Tetrahedra & 2.633915\dots & 0.19s & 7.89s \\ \hline
$\mathcal{U}^{\{3,3,6\}}_{2+\zeta}$ & $\big(3, \big\langle \left(5+\sqrt{-3}~\right)\big/2\big\rangle\big)$ & 28 & Tetrahedra & 3.261210\dots & 4.62s & 62\phantom{.00}s \\ \hline
$\mathcal{U}^{\{3,3,6\}}_3$ & $\big(3, \big\langle 2\big\rangle\big)$ & 54 & Tetrahedra & 3.849694\dots & 43.2\phantom{0}s & 452\phantom{.00}s \\ \hline
$\mathcal{U}^{\{3,4,4\}}_2$ & $\big(1, \big\langle 2\big\rangle\big)$ & 4 & Octahedra & 3.057141\dots & 0.57s & 34.2\phantom{0}s \\ \hline
$\mathcal{U}^{\{3,4,4\}}_{2+i}$ & $\big(1, \big\langle 2+\sqrt{-1}~\big\rangle\big)$ & 5 & Octahedra & 2.633915\dots & 0.42s & 17.0\phantom{0}s \\ \hline
$\mathcal{U}^{\{3,4,4\}}_{2+2i}$ & $\big(1, \big\langle 2+2\sqrt{-1}~\big\rangle\big)$ & 16 & Octahedra & 3.525494\dots & 18.2\phantom{0}s & 344\phantom{.00}s \\ \hline
$\mathcal{U}^{\{4,3,6\}}_{1+\zeta}$ & & 6 & Cubes & 2.633915\dots & 1.23s & 16.5\phantom{0}s \\ \hline
$\mathcal{U}^{\{4,3,6\}}_2$ & & 16 & Cubes & 3.525494\dots & 47.8\phantom{0}s & 355\phantom{.00}s \\
\end{tabular}
}
\end{center}
Note that we the octahedral manifolds $\mathcal{U}^{\{3,4,4\}}_z$ are included even though they do not admit a triangulation by regular ideal tetrahedra (the given triangulations of $\mathcal{U}^{\{3,4,4\}}_z$ are obtained by subdividing each regular ideal octahedron into four ideal tetrahedra). The triangulations used for the cubical manifolds $\mathcal{U}^{\{4,3,6\}}_z$ are obtained by dividing each cube into 5 regular ideal tetrahedra.  
\end{example}

\subsection{Implementation choices and precision} \label{sec:implPerf}

The Hodgson--Weeks algorithm is implemented in C/C++ and uses native doubles with 53bits precision or quad-doubles \cite{qd} with 212bits precision. Using quad-doubles is about 50 times slower than the double implementation. To compute the systole with the Hodgson--Weeks algorithm, double precision is insufficient for about a quarter of all orientable cusped census manifolds and, in particular, for almost all those orientable cusped census manifolds where the computation took long (more than an hour with quad-double precision). For simplicity, we used quad-double for all computations using the Hodgson--Weeks algorithm.

The new algorithm is implemented in Python and, for all computations in this paper, we ran the new algorithm using SageMath's floating-point and interval type \cite{SageMath}. The libraries underlying these types are mpfr \cite{mpfr} and mpfi \cite{mpfi} library, respectively. A quick benchmark we wrote in C++ reveals that (when preallocating memory) mpfr is about half the speed of quad-doubles at the same precision. However, it is harder to measure the performance penalty incurred by using Python over C/C++. When increasing the precision from 50 to 500bits, the runtime of the length spectrum computation only increases by 30\% indicating that, for this range of precisions, the time is dominated by the overhead from the Python wrapping and computations in Python that are not floating-point operations. The length spectrum computations (including the verified ones) for all census manifolds succeeded with 500bits precision. Thus, for simplicity, we used 500bits precision for all computations with the new algorithm unless indicated otherwise.

Unfortunately, a reimplementation of the new algorithm in C++ requires substantial effort and is out of the scope of this project.

\section{Future work} \label{sec:futureFinTrig}

Future work might include support for geometric finite triangulations (based on \cite{casson:geo,heardThesis,matthiasVerifyingFinite}). Such work would enable:
\begin{itemize}
\item Extending the algorithms described here and in the subsequent paper \cite{goerner:drilling} to closed hyperbolic 3-manifolds without known geometric spun-triangulations such as \texttt{m007(3,1)}.
\item Drilling any geodesic and, thus, the isometry signature of any closed hyperbolic 3-manifold; see \cite{goerner:drilling}. This is in contrast to geometric spun-triangulations where we cannot drill geodesics intersecting the incompleteness locus $M_\text{filled}\setminus M$.
\item Computing the optimal Margulis number $\mu(M)$ for all closed hyperbolic 3-manifolds. This includes the examples in Section~\ref{sec:proofcensusMargulisConstant} where we could not compute it using the given geometric spun-triangulations.
\end{itemize}
The ability to use finite geometric triangulations in addition to geometric spun-triangulations also would yield the following benefits in terms of ease of use and performance:
\begin{itemize}
\item For many manifolds, it makes it easier to just find a geometric triangulation; see \cite{matthiasVerifyingFinite} for examples. 
\item We expect an acceleration of the new length spectrum algorithm for some (but not all) closed hyperbolic 3-manifolds --- in particular, the ones with long systole. We already mentioned this in Remark~\ref{rem:finAccel} and Section~\ref{sec:perfTrigDep}.
\end{itemize}

\clearpage

\appendix
\section{Notation}
\label{App:Notation}
For the convenience of the reader, we list some of the notation used in the paper.  

\newcolumntype{L}[1]{>{\raggedright\let\newline\\\arraybackslash\hspace{0pt}}p{#1}}
\newcolumntype{C}[1]{>{\centering\let\newline\\\arraybackslash\hspace{0pt}}p{#1}}
\newcolumntype{R}[1]{>{\raggedleft\let\newline\\\arraybackslash\hspace{0pt}}p{#1}}

\begin{longtable}{L{0.15\textwidth}L{0.79\textwidth}}%
\renewcommand{\arraystretch}{1.4}%
$M$ & Oriented, finite-volume hyperbolic $3$-manifold; complete until Section~\ref{sec:generalizationToSpun} generalizes to geometric spun-triangulations; can always be completed by attaching circles.\\
$\mu(M)$ & Optimal Margulis number of $M$; defined in Section~\ref{sec:margulis}; computed by Algorithm~\ref{algo:optimalMargulis}.\\
$\gamma$ & Primitive, unoriented closed geodesics in $M$.\\
$\lambda$ & Complex length of $\gamma$; computed from $\myPSL(2,\C)$-matrix in Section~\ref{sec:computeLen}.\\
$(\lambda_0, w_0), \dots$ & Length spectrum stream: $\lambda_i$ complex length, $w_i$ word in face-pairing presentation of $\pi_1(M)$; see Section~\ref{sec:putAlgo}; computed by Algorithm~\ref{algo:lenSpec}.\\
$a=[\underline{a}, \overline{a}]$ & Notation for interval; defined in Section~\ref{subsec:intervalNotation} and \cite[Section~2.2]{goerner:tiling}.\\
$\mu$ & Cut-off length for length spectrum; see Section~\ref{sec:immApp}.\\
$\myTrig$ & Geometric finite or ideal triangulation of $M$.\\
$\H^3$ & Hyperboloid model; see Section~\ref{sec:defPrelength}.\\
$P\subset\H^3$ & Fundamental polyhedron for $M$.\\
$\Gamma\subset\SO(1,3)$ & Decktransformations such that $M\cong \Gamma\backslash\H^3$ ($M_\text{filled}\cong \Gamma\backslash\H^3$ when $M$ is incomplete).\\
$m\in\Gamma$ & Element in $\Gamma$ (carries along a word $w$ in the face-pairing presentation of $\pi_1(M)$).\\
$S\subset P$ & A spine; defined in Section~\ref{sec:spineDefDef}.\\
$(\mu_0, m_0), \dots$ & Pre-length spectrum stream: $\mu_i\in\R^{\geq 0}$ cut-off length, $m_i\in\Gamma$; see Section~\ref{sec:defPrelength}; computed by Algorithm~\ref{algo:preLen}.\\
$z_t\in\C\setminus\{0,1\}$ & Cross ratio/shape of tetrahedron of geometric triangulation $\myTrig$; see Section~\ref{subsub:verifiedChain}.\\
$M_\text{filled}$ & Complete hyperbolic $3$-manifold obtained by attaching circles to $M$; see Section~\ref{sec:spineDef}.\\
$T_t\subset \H^3$ & Geodesic tetrahedron such that $P=\cup T_t$.\\
$S_t\subset T_t$ & Piece of spine.\\
$\vec{s}_t\in S_t$ & Incenter of $T_t$ serving as basepoint of $S_t$.\\
$r(S_t)$ & Spine radius: radius of $S_t$ about $\vec{s}_t$; computed by Algorithms~\ref{algo:spine}.\\
$\vec{v}^v_t\in \H^3\cup \posLightCone$ & Finite or ideal vertex of $T_t$; see Section~\ref{sec:spineVertexVectors} and \cite[Section~4.6]{goerner:tiling}.\\
$g^f_t$ & Face-pairing matrix; see \cite[Section~4.6]{goerner:tiling}.\\
$B(l)$ & Horoball defined by light-like vector $l\in \posLightCone$; see \cite[Section~3.1]{goerner:tiling}.\\
$\vec{s}_t^{kl}, \vec{s}_t^f$ & Point on edge $kl$ or face $f$ of $T_t$ spanning $S_t$ together with $\vec{s}_t$ in Section~\ref{sec:spinePoints}.\\
$(r_0, m_0T_{t_0}), \dots$ & Tiling stream: $r_i$ tiling radius, $m_iT_{t_i}$ lifted tetrahedron with $m_i\in\Gamma$; computed by \cite[Algorithm~7.3.1]{goerner:tiling}; see Section~\ref{sec:preLengthAlgo}.\\
$G$ & Set of unoriented geodesics; used by Algorithm~\ref{algo:lenSpec}; implemented in Sections~\ref{sec:setOfGeos} and~\ref{sec:coreCurveAvoidance} (for spun-triangulations).\\
$\Eq_\Gamma(m, m')$ & Predicate that is true if $m$ and $m'\in\Gamma$ are the same; see Section~\ref{sec:matrixEquality}.\\
$\Gamma^\text{hyp}\subset \Gamma$ & Hyperbolic elements of $\Gamma\subset\SO(1,3)$; see Section~\ref{sec:matrixPowerEquality}.\\
$\Eq^\Z(m',m)$ & Prediate that is true if $m'\in\Gamma^\text{hyp}$ is a non-trivial power of $m\in\Gamma^\text{hyp}$.\\
$K$ & Line fixed by $m\in\Gamma^\text{hyp}$; see Section~\ref{sec:graphTraceSet}.\\
$M_\text{filled}$ & Complete manifold obtained from $M$ by attaching circles; see Section~\ref{sec:generalizationToSpun}.\\
$C_i$ & Generalized cusp neighborhood: horocusp neighborhood (if cusp complete) or tube about core curve  (otherwise); see Definition~\ref{def:genCuspNbhd}.\\
$U\subset\H^3$ & Complement of generalized cusp neighborhoods; fulfills swiss cheese property (see Definition~\ref{def:swissCheese}); see Section~\ref{sec:swissCheese}.\\
$\hat{C_i}\supset C_i$ & Enclosing horoneighborhood; see Section~\ref{sec:encloHoro}.\\
$h$ & Scaling factor to obtain edge lengths of $\hat{C_i}$; see Section~\ref{sec:computeHorotriangles} and Figure~\ref{fig:enclosingNbhd}.\\
$r$ & Radius of tube about core curve; see Section~\ref{sec:scalingNeighborhoodsIncomplete} and Figure~\ref{fig:enclosingNbhd}.\\
$s$ & Euclidean slope of cone corresponding to tube about core curve; see Section~\ref{sec:scalingNeighborhoodsIncomplete} and Figure~\ref{fig:enclosingNbhd}.\\
$N^v_t\subset\H^3$ & Lift of generalized cusp neighborhood $C_i$; see Section~\ref{sec:perTetCoreCurve} and Figure~\ref{fig:enclosingNbhd}.\\
$c^v_t$ & Lifted core curve (if vertex $v$ of $T_t$ corresponds to complete cusp), empty (otherwise); see Section~\ref{sec:perTetCoreCurve} and Figure~\ref{fig:enclosingNbhd}; computed in Section~\ref{sec:addCoreCurves}.\\
$\vec{w}^{kl}_t$ & Intersection of boundary of generalized cusp neighborhood with edge $kl$ of $T_t$; see Section~\ref{sec:spunSpine} and Figure~\ref{fig:enclosingNbhd}.\\
$\vec{v}^{kl}_t$ & Intersection of boundary of enclosing horoneighborhood with edge $kl$ of $T_t$; see Section~\ref{sec:spunSpine} and Figure~\ref{fig:enclosingNbhd}.\\
$q\in \C\setminus\{0\}$ & Vertex position in upper half-space model; see Section~\ref{sec:spunSpine} and Figure~\ref{fig:enclosingNbhd}.\\
$\overline{r}(S_t)$ & Upper bound for radius of $S_t$ about $\vec{s}_t$; see Section~\ref{sec:spunSpine}.\\
$T^f_t\subset\H^3$ & Face $f$ of $T_t$; see Section~\ref{sec:modTiling}; defined in \cite[Section~4.6]{goerner:tiling}.\\
$\varepsilon$ & Maximal length of essential loops forming $\varepsilon$-thin part; see Section~\ref{sec:margulis}.\\
$K, K'\subset \H^3$ & Geodesic or horoball corresponding to a closed geodesic or cusp neighborhood of $M$; see Section~\ref{sec:margulisAux} and \cite[Section~5.1]{goerner:tiling}.\\
$\Gamma_K$ & Stabilizer of $K$; see Section~\ref{sec:margulisAux} and \cite[Section~7.2]{goerner:tiling}.\\
$r(K,\varepsilon)$ & Radius $\varepsilon$-thin part about $K$ as neighborhood of $K$; see Section~\ref{sec:margulisAux}; computed in Section~\ref{sec:implR}.\\
$\varepsilon(K, r)$ & Supremal $\varepsilon$ such that neighborhood of radius $r$ of $K$ is $\varepsilon$-thin; see Section~\ref{sec:margulisAux}; computed in Section~\ref{sec:implEps}.\\
$\mu(K, K', r)$ & Supremal $\varepsilon$ such that the $\varepsilon$-thin parts about $K$ and $K'$ are disjoint if $r$ is the distance between $K$ and $K'$ in $M$; see Section~\ref{sec:margulisAux}; computed in Section~\ref{sec:implMu}.\\
$d_M(K, K')$ & Distance of $K$ and $K'$ in $M$; see Section~\ref{sec:margulisAux}; defined in \cite[Section~10.1]{goerner:tiling}.
\end{longtable}

\newpage

\bibliographystyle{hamsalphaMatthias}
\renewcommand{\path}[1]{#1}
\newcommand{\noop}[1]{}

\bibliography{lenSpec}

\end{document}